\documentclass[a4paper]{amsart}

\usepackage{graphicx} % Required for inserting images
\usepackage{amsthm}
\usepackage{tikz-cd}
\usepackage{amscd}
\usepackage{hyperref}
\usepackage{mathrsfs}
\usepackage{xcolor}
\usepackage{setspace}
\usepackage{csquotes}
\usepackage[only, llbracket, rrbracket]{stmaryrd}
\usepackage{amssymb, amsmath, amsthm, mathrsfs, amsfonts, latexsym, bbm}
\usepackage{comment}
\usepackage[shortlabels]{enumitem}

\newtheorem{theorem}{Theorem}[section]
\newtheorem{corollary}[theorem]{Corollary}
\newtheorem{lemma}[theorem]{Lemma}
\newtheorem{proposition}[theorem]{Proposition}

\newtheorem{defi}[theorem]{Definition}
\newtheorem{example}[theorem]{Example}

\newcounter{claim}
\newcommand{\norm}[1]{\left\lVert#1\right\rVert}
\newcommand{\abs}[1]{\left\lvert#1\right\rvert}

\newcommand{\supp}{\mathrm{Supp}}

\newcommand{\St}{\mathrm{St}}

\newcommand{\Clop}{\mathrm{Clop}}
\newcommand{\Exh}{\mathrm{Exh}}
\newcommand{\mI}{\mathcal{I}}
\newcommand{\nwd}{\mathrm{nwd}}
\newcommand{\FIN}{\mathrm{Fin}}

\title{Grothendieck ideals of $\ell_\infty$}
\author{M. Hru\v s\' ak, M. A. Rincón-Villamizar, L. S\'aenz and C. Uzcátegui Aylwin}
\date{July 2026}

\thanks{{\it 2010 MSC.} 46E05, 46E27, 03E05 \newline
{\it Key words and phrases.} Grothendieck space, Nikodym property, Ideal.\newline
{The first and third author gratefully acknowledge support received from a DGAPA-PAPIIT grant IN107526 and  a SECIHTI grant CBF-2025-I-898. The second and fourth authors thank for the support received from the Universidad Industrial de Santander.
 }}

\address{Centro de Ciencas Matem\'aticas\\
UNAM\\
A.P. 61-3, Xangari, Morelia, Michoac\'an\\
58089, M\'exico}

\email{michael@matmor.unam.mx, luisdavidr@ciencias.unam.mx}

\address{Universidad Industrial de Santander, Bucaramanga, Colombia}

\email{marinvil@uis.edu.co, cuzcatea@saber.uis.edu.co}

\begin{document}

\begin{abstract} We answer several questions in the literature concerning the\break Grothendieck property of ideals of the Banach lattice $\ell_\infty$ that contain $c_0$. Any such ideal can be represented as a space $c_{0,\mathcal I}$ for $\mathcal I$ an ideal over the natural numbers.  We provide a characterization of when $c_{0,\mathcal I}$ is a Grothendieck space in terms of finitely additive measures over $\mathcal P(\omega)$ and elements of $\mathcal I$. Using this characterization we show that there are analytic ideals $\mathcal I$ such that $c_{0,\mathcal I}$ is Grothendieck. In the opposite direction we show that for any AD family $\mathcal A$, $c_{0,\mathcal I(\mathcal A)}$ and $C(K_\mathcal A)$ are not Grothendieck spaces, and that for most of the Borel ideals present in the literature, $c_{0,\mathcal I}$ is not Grothendieck. In particular, the family of ideals that do not have the Grothendieck property is cofinal in the Rudin-Keisler order, so the Grothendieck property is not downward closed in the Kat\v{e}tov order. Continuing the work in \cite{Sobota-Zuchowski, Zuchowski}, we also provide similar results on the Nikodym property of the Boolean subalgebras of $\mathcal P(\omega)$ generated by the ideal $\mathcal I$.
\end{abstract}

\maketitle

In this note, we continue the analysis on the structure of the ideals of $\ell_\infty$, the Banach lattice of all bounded sequences of reals equipped with the supremum norm and pointwise order, previously developed on \cite{Complementedideals,Uzcategui-complemented,Uzcategui-I-convergence}.  For the following analysis, it is important to distinguish between the two kinds of ideal with which we deal. 

Recall that a \emph{Banach lattice} is a Banach space $X$ equipped with a lattice order relation such that
\begin{enumerate}
    \item[$\bullet$] if $x\leq y$ and $z\in X$, then $x+z\leq y+z$;
    \item[$\bullet$] if ${\bf 0}\leq x$ and $a\in[0,\infty)$, then ${\bf 0}\leq ax$;
    \item[$\bullet$] if $\abs{x}=x\lor-x$, then for any $x,y\in X$, if $\abs{x}\leq \abs{y}$ then $\norm{x}\leq \norm{y}$.
\end{enumerate}
An \emph{ideal} $Y$ of a Banach lattice $X$ is a linear subspace of $X$, such that if $y\in Y$, $x\in X$ and $\abs{x}\leq \abs{y}$, then $x\in Y$. 

On the other hand, given a nonempty set $X$, $\mathcal I\subseteq \mathcal P (X)$ is an \emph{ideal} if $A,B\in \mathcal I$ implies that $A\cup B\in \mathcal I$, and if $A\in \mathcal I$ and $B\subseteq A$, then $B\in \mathcal I$. An ideal is \emph{proper} if $X\notin \mathcal I$. All ideals we consider consist of subsets of a countable set (called its base set), are proper, and contain all finite subsets of the base set. For ease of notation, we usually assume that all ideals have as their base set $\omega$ (the set of all natural numbers).

 Given an ideal $\mathcal I\subseteq \mathcal P (\omega)$ and a sequence $(x_n)_{n\in \omega}\in \ell_\infty$ we say that its $\mathcal I$-limit is $r\in \mathbb R$ if for any $\epsilon>0$, $\{n\in \omega: \abs{x_n-r}\ge \epsilon\}\in \mathcal I$, and denote this fact by $\mathcal I \text{-}\lim x_n=r$. We denote

$$c_{0,\mathcal I}=\{(x_n)_{n\in \omega}\in \ell_\infty: \mathcal I\text{-}\lim x_n=0\},$$
and equip this space with the norm inherited from $\ell_\infty$. In \cite{Uzcategui-I-convergence} it is noted that $X\subseteq \ell_\infty$ is a closed ideal of $\ell_\infty$ that contains $c_0$ if and only if $X=c_{0,\mathcal I}$ for some ideal $\mathcal I$. 

Since Diestel's influential work \cite{DIESTEL197397}, Grothendieck spaces have caught the attention of functional analysts. The recent survey \cite{GrothendieckLandscape} presents  the current state of the art in these spaces and contains a list of questions relevant to the development of the theory.

With the notation just introduced, Question (9) of \cite{GrothendieckLandscape} can be stated as: \emph{When is $c_{0,\mathcal I}$ a Grothendieck space?} Theorem \ref{Main Grothendieck} provides several characterizations that answer this question. 

This result, together with the main result of \cite{Complementedideals}, settles the question implicitly asked by Kania in \cite{Kania}: \emph{Is it true that if $c_{0,\mathcal I}$ does not have a complemented copy of $c_0$, then $c_{0,\mathcal I}$ is complemented in $\ell_\infty$?}

This last question has an interesting story which we now briefly explain.
Recall that  $\ell_\infty$ is a Grothendieck space, as was proved by Grothendieck himself in \cite[Théorème 9]{GrothendieckSurLA}, also recall that the Grothendieck property is preserved under  surjective linear operators and that $c_0$, obviously, is not a Grothendieck space. 

The previous paragraph is a short ``unnecessarily high-tech" proof of Phillips-Sobczyk theorem stating that $c_0$ is not complemented in $\ell_\infty$ (see \cite[Theorem 2.5.5]{Kalton} for an elementary proof). An obvious implication of the argument presented in the previous paragraph is that if $c_{0,\mathcal I}$ is complemented in $\ell_\infty$, then it does not have a complemented copy of $c_0$. Kania's question asks wether the converse is true. Theorem \ref{nowhere dense} answers this question in the negative.
Actually, it has been known for several years (see \cite{Leonetti}) that for any analytic ideal $\mathcal I$, $c_{0,\mathcal I}$ is not complemented in $\ell_\infty$. So, we find it interesting that there are analytic ideals such that $c_{0,\mathcal I}$ is Grothendieck.

Another line of research deals with the Nikodym property of Boolean algebras defined by ideals, a topic developed in \cite{Sobota-Zuchowski, Zuchowski}. We also present some results regarding this notion that use techniques similar to the ones used to analyze the Grothendieck property.

\section{Preliminaries}

We start with the main definitions. Any unexplained notions will be subsequently developed.

\begin{defi}
    Let $X$ be a Banach space. $X$ is \emph{Grothendieck} (or has the Grothendieck property) if any weak$^*$ null sequence in $X^*$ is weakly null. We say $\mathcal I$ is \emph{Grothendieck} if $c_{0,\mathcal I}$ is Grothendieck.
\end{defi}

It is well known that a Banach space $X$ is Grothendieck if and only if any bounded operator $T:X\to c_0$ is weakly compact; see \cite{GrothendieckLandscape} for more equivalences.

The following result appears in \cite[Corollary 2]{Cembranos-Grothendieck}:

\begin{corollary}\label{Grothendieck for C(K)}
    Let $X$ be a Banach space isomorphic to a $C(K)$ space. $X$ has the Grothendieck property if and only if $X$ does not have complemented copies of $c_0$. In particular, $K$ cannot contain non-trivial convergent sequences.
\end{corollary}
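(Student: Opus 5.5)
The statement has two directions. For the forward one, I will use that the Grothendieck property passes to complemented subspaces, and for the backward one I will use Cembranos' result that for $C(K)$ spaces a non-Grothendieck space contains a complemented $c_0$. The "in particular" clause then comes from a Josefson--Nissenzweig type construction on a convergent sequence of $K$.

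\textbf{Complemented $c_0$ forbids the Grothendieck property.} Suppose $X$ contains a complemented copy of $c_0$, with $P\colon X\to c_0$ a bounded surjective projection. The Grothendieck property passes to quotients: if $(f_n)$ is weak$^*$ null in $(c_0)^*$, then $(f_n\circ P)$ is weak$^*$ null in $X^*$. If $X$ were Grothendieck, $(f_n\circ P)$ would be weakly null. Since $P^*$ is an isomorphic embedding, weak nullity transfers back to $(f_n)$. But $c_0$ is not Grothendieck: the coordinate functionals $e_n^*\in\ell_1$ are weak$^*$ null but not weakly null, as $\langle \mathbf 1,e_n^*\rangle=1$ with $\mathbf 1\in\ell_\infty=\ell_1^*$. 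So $X$ is not Grothendieck. This direction holds for every Banach space and needs nothing about $C(K)$.

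\textbf{Not Grothendieck gives a complemented $c_0$.} This is the substantive direction, and it is exactly where we invoke \cite{Cembranos-Grothendieck}. The classical route goes as follows. Non-Grothendieck means there is a non-weakly-compact operator $T\colon C(K)\to c_0$. By Pełczyński's property (V) for $C(K)$ spaces, a non-weakly-compact operator on $C(K)$ fixes a copy of $c_0$. So there is $Z\subseteq C(K)$ with $Z\simeq c_0$ and $T|_Z$ an isomorphism onto $T(Z)$. Sobczyk's theorem says $T(Z)\subseteq c_0$ is complemented in the separable space $c_0$, say by $Q$. Then $(T|_Z)^{-1}QT$ is a projection of $C(K)$ onto $Z$. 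Isomorphism invariance handles the case where $X$ is only isomorphic to $C(K)$. The main obstacle is property (V), which I would cite rather than reprove.

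\textbf{Convergent sequences.} If $x_n\to x$ in $K$ with the $x_n$ distinct and different from $x$, set $\mu_n=\delta_{x_{n}}-\delta_{x}$. This sequence is weak$^*$ null, since $f(x_n)\to f(x)$ for each $f\in C(K)$. It is not weakly null. To see this, pass to a subsequence and choose disjoint open neighbourhoods $U_n\ni x_n$ of the $x_{n}$ that avoid $x$; this is possible because $K$ is Hausdorff and the sequence is discrete away from its limit. The Borel set $B=\bigcup_n U_n\cap\{x_n\}=\{x_n:n\in\omega\}$ defines the functional $\mu\mapsto\mu(B)$ on $C(K)^*$. It is bounded and linear, and it gives $\mu_n(B)=1$ for all $n$. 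Hence $C(K)$ is not Grothendieck.
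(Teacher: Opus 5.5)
Your proposal is correct and follows the paper's route. The paper gives no argument of its own and simply cites \cite[Corollary 2]{Cembranos-Grothendieck}, and your sketch is the standard proof of that result: the Grothendieck property passes to complemented subspaces; conversely, a non-weakly compact $T\colon C(K)\to c_0$ fixes a copy $Z$ of $c_0$ by Pe\l czy\'nski's property (V), and Sobczyk's theorem applied to $T(Z)\subseteq c_0$ makes $Z$ complemented. Your direct $\delta_{x_n}-\delta_x$ argument for the final clause is also fine, although the disjoint neighbourhoods $U_n$ are superfluous, since $B=\{x_n:n\in\omega\}$ is already an $F_\sigma$ set not containing $x$.
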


Since $c_{0,\mathcal I}$ spaces are $C(K)$ spaces, as we will shortly explain, this implies that for any ideal $\mathcal I$, $c_{0,\mathcal I}$ has the Grothendieck property if and only if it does not have complemented copies of $c_0$. Furthermore, for any non-tall ideal $\mathcal I$, $c_{0,\mathcal I}$ is not Grothendieck.

\begin{defi}
    Let $\mathbb B$ be a Boolean algebra. $\mathbb B$ has the \emph{Nikodym} property if for any sequence of measures $(\mu_n)_{n\in \omega}\subseteq \text{ba}(\mathbb B)$ such that for any $B\in \mathbb B$, $(\mu_n(B))_{n\in \omega}$ is bounded implies  $(\mu_n)_{n\in \omega}$ is norm bounded. We say $\mathcal I$ is \emph{Nikodym} if the algebra $\mathcal I\cup \mathcal I^*$ is Nikodym.\footnote{Notice this definition defers from the one used in \cite{Zuchowski}.}
\end{defi}

It is a standard fact that $\mathbb B$ has the Nikodym property if and only if any sequence  $(\mu_n)_{n\in \omega}\subseteq \text{ba}(\mathbb B)$, such that for any $B\in \mathbb B$, $\lim _{n\in \omega}(\mu_n(B))=0$, is a weak$^*$ null sequence; see \cite[Proposition 2.4]{Sobota-Zdomskyy}.

The following are standard notions in their respective areas.  

\smallskip

We consider $\mathcal{P}(\mathbb{\omega})$ equipped with the natural topology inherited from the product topology of $2^\omega$ by means of the characteristic functions. Whenever we talk about a subset of $\mathcal{P}(\omega)$ having any topological property: closed, Borel, analytic, etc., we refer to this topology. Recall that an ideal is \textit{tall} if for any infinite $A\subseteq \omega$ there is an infinite $I\in \mathcal I$ such that $I\subseteq A$.

Given an ideal, we denote $\mathcal I^+=\mathcal P(\omega)\setminus \mathcal I$. For any $X\in \mathcal I^+$, we denote $\mathcal I\lvert_X=\{A\cap X:A\in \mathcal I \}.$
There are several orders for ideals on $\omega$. \textit{The Kat\v{e}tov order}: given two ideals $\mathcal I$, $\mathcal J$ on $\omega$, $\mathcal I\leq_K \mathcal J$ if there is a map $f:\omega\to \omega$ such that for any $A\in \mathcal I$, $f^{-1}(A)\in \mathcal J$. \textit{The Rudin-Keisler order}: given two ideals $\mathcal I$, $\mathcal J$ on $\omega$, $\mathcal I\leq_{RK} \mathcal J$ if there is a map $f:\omega\to \omega$ such that for any  $A\subseteq \omega$, $A\in \mathcal I$ if and only if $f^{-1}(A)\in \mathcal J$. Clearly, the Kat\v{e}tov order refines the Rudin-Keisler order. The former order has been widely used to study combinatorial properties of ideals, see \cite{Hrusakidealsurvey}. It is easy to see that for any ideal $\mathcal I$, and $X\in \mathcal I^+$, $\mathcal I\leq_{K}\mathcal I\lvert_X$

If $\mI$ is any ideal and $(\mI_n)_{n\in \omega}$ is a sequence of ideals, we define the Frol\'ik sum of  $(\mI_n)_{n\in \omega}$ by $\mI$ as

$$\sum_{\mI} \mI_n=\{A\subseteq \omega^2: \{n:\{m:(n,m)\in A\}\notin \mI_n\}\in \mI\}.$$

It is easy to see that this is an ideal over $\omega^2$. A particular case of this is when for each $n\in \omega$, $\mathcal I_n=\mathcal J$. In such case we modify the notation and write

$$\mI\times \mathcal J=\{A\subseteq \omega^2: \{n:\{m:(n,m)\in A\}\notin \mathcal J\}\in \mI\}.$$

The following four ideals will be central to our analysis:

$$\mathcal Z=\Bigg\{A\subseteq \omega:\lim_{n\to \infty}\frac{\abs{A\cap[2^n,2^{n+1})}}{2^n}=0\Bigg\},$$

 \textit{The ideal of nowhere dense subsets of $\mathbb Q$}:

$$\nwd=\{A\subseteq \mathbb Q: A \text{ is nowhere dense}\}.$$

\textit{The ideal generated by the convergent sequences in $\mathbb Q$}: $$\text{conv}=\{A\subseteq \mathbb Q: A \text{ has finitely many limit points}\},$$

and \textit{Solecki's ideal}:

Denote by $$\Omega=\{A\in \Clop(2^\omega):\lambda(A)=1/2\}$$

Where $\Clop(2^\omega)$ denotes the set of all clopen sets in the Cantor set, and $\lambda$ the Lebesgue measure over $2^\omega$. For any $x\in 2^\omega$, $I_x=\{A\in \Omega:x\in A\}$. Solecki's ideal is the ideal generated by these sets, that is:

$$\mathcal S=\langle\{I_x:x\in 2^\omega\}\rangle.$$

Recall that a function $\varphi:\mathcal P (\omega)\to \mathbb R\cup\{\infty\}$ is a lower semicontinuos (lsco) submeasure if the following hold:

\begin{enumerate}
    \item $\varphi(\varnothing)=0$,
    \item for all $A\subseteq B$, $\varphi(A)\leq \varphi(B)$,
    \item for all $A,B\subseteq \omega$, $\varphi(A\cup B)\leq \varphi(A)+\varphi(B)$,
    \item for all $A\subseteq \omega$, $\varphi(A)=\sup\{\varphi(F):F\in [A]^{<\omega}\}$, and
    \item for all $F\in[\omega]^{<\omega}$, $\varphi(F)<\infty$.
\end{enumerate}

Given a lsco submeasure its exhaustive ideal is defined as $$\Exh(\varphi)=\{A\subseteq \omega:\lim_{n\to \infty}\varphi(A\setminus n)=0\}.$$ While its ideal of sets with finite submeasure is defined as $$\FIN(\varphi)=\{A\subseteq \omega:\varphi(A)<\infty\}.$$ It is immediate that $\Exh(\varphi)\subseteq \FIN(\varphi)$ and that $\FIN(\varphi)$ is an $F_\sigma$ ideal.

A lsco submeasure is called non-pathological if there exists a sequence $(\lambda_n)_{n\in \omega}$ of non-negative $\sigma$-additive measures over $\omega$ such that for any $A\subseteq \omega$, $\varphi(A)=\sup_{n\in \omega}\lambda_n(A)$. By the main result of \cite{SOLECKI199951} $\mathcal I$ is an analytic $P$-ideal if and only if there exists a lsco submeasure on $\omega$ such that $\mathcal I=\Exh(\varphi)$. By the measure dichotomy theorem of \cite{HrusakBorel}, an analytic $P$-ideal admits a non-pathological lsco submeasure if and only if for any $X\in \mathcal I^+$, $\mathcal I\lvert_X\leq_K \mathcal Z$.

 It is easy to see  that there are no $F_\sigma$ (proper) ideals Kat\v{e}tov above $\text{conv}$, see \cite[Corollary 3.14]{HrusakRamsey}. Therefore, if $\mathcal I$ is a $P$-analytic ideal such that for any $X\in \mathcal I^+$, $\text{conv}\leq \mathcal I\leq \mathcal I\lvert_X\leq \mathcal Z$, there is a non-pathological lsco submeasure $\varphi$ such that $\mathcal I=\Exh(\varphi)$ and $\varphi(\omega)<\infty$. 

In contrast, given a mapping $f:\omega\to \omega$ such that $\sum_{n\in \omega}f(n)=\infty$, the summable ideal associated to $f$ is defined as: 

$$\mathcal I_f=\{A\subseteq \omega:\sum_{n\in A}f(n)<\infty\}.$$

Notice that $\varphi_f(A)=\sum_{n\in A}f(n)$ defines a $\sigma$-additive infinite measure on $\omega$ such that $\mathcal I_f=\Exh(\varphi_f)=\FIN(\varphi_f)$. Therefore, the family of summable ideals are examples of analytic $P$-ideals such that they admit a lsco submeasure with $\varphi(\omega)=\infty$. In particular, they are not Kat\v{e}tov above $\text{conv}$, furthermore, any summable ideal is Kat\v{e}tov below $\mathcal Z$.

The ideals $\FIN^{\alpha}$, for $\alpha<\omega_1$ are recursively defined as follows. 
\begin{enumerate}
    \item For $\alpha=0$, $X_{0}=\{0\}$ $\FIN^{0}=\{\emptyset\}$.
    \item For $\alpha=1$, $X_1=\omega$ and $\FIN=[\omega]^{<\omega}$.
    \item For $\alpha=\alpha'+1$ successor ordinal, $X_{\alpha}=\omega\times X_{\alpha'}$ and $$\FIN^ {\alpha}=\{A\subseteq X_{\alpha}: \{n: \{x:(n,x)\in A\}\notin \FIN^{\alpha'}\}\in \FIN\}.$$
    \item For $\alpha$ limit,  define $X_{\alpha}=\bigcup_{\beta<\alpha}X_{\beta}$ and 
$$\FIN^{\alpha}=\{A\subseteq X^{\alpha}: \{\beta:\{x:(\beta,x)\in A\}\notin \FIN^{\beta}\} \text{ is bounded in } \alpha\}\}$$
\end{enumerate}
Given a Boolean algebra $\mathbb B$, we denote by $\St(\mathbb B)$ its Stone space, that is the space of all ultrafilters of the algebra $\mathbb B$. For any $B\in \mathbb B$ we denote $\overline{B}=\{x\in \St(\mathbb B): B\in x\}$.

Recall that a family $\mathcal A\subseteq [\omega]^{\omega}$ is called almost disjoint (AD) if for any $A,B\in \mathcal A$, $A\cap B$ is finite. Given an AD family $\mathcal A$ we denote by $\mathcal I(\mathcal A)$ the ideal generated by the family, that is, $$\mathcal I(\mathcal A)=\Bigg\{B\subseteq \omega:\exists A_0,\dots, A_n\in \mathcal A, \abs{B\setminus \bigcup_{i\leq n} A_i}<\omega\Bigg\}$$

For any AD family $\mathcal A$ we define its \emph{Franklin compactum} as $K_\mathcal A=\St(\mathbb B(\mathcal A) )$, where $\mathbb B(\mathcal A)$ is the Boolean algebra generated by $\mathcal A\cup [\omega]^{<\omega}$. This space can also be presented as the one point compactification of the \emph{Mr\'oka-Isbell space}, $\psi(\mathcal A)$.
 
Given an ideal $\mathcal I$, we denote, as usual, its dual filter by $\mathcal I ^*$; i.e., $$\mathcal I^*=\{\omega\setminus I:I\in \mathcal I\}.$$ We denote $K_\mathcal I=\{\mathcal U\in \beta\omega:\mathcal I^*\subseteq \mathcal U\}$ the corresponding compact subspace of $\beta\omega$, and $U_\mathcal I=\omega\sqcup \{\mathcal U\in \omega^*: \mathcal U \cap \mathcal I\neq \varnothing\}$. $U_\mathcal I$ is clearly an open subset of $\beta\omega$ and defines a locally compact space. We denote $U_\mathcal I^*$ its one point compactification and $p_\mathcal I$ its point at infinity. Notice that $U_\mathcal I^*$ has an alternative presentation.  Denote $\mathbb{B}(\mathcal I)=\mathcal I\cup \mathcal I^*$, the Boolean algebra defined by $\mathcal I$, then $U_\mathcal I^*\simeq\St(\mathbb B(\mathcal I))$.

We denote
$$c_{\mathcal I}=\{(x_n)_{n\in \omega}\in \ell_\infty: \exists r\in \mathbb R \text{ such that }\mathcal I\text{-}\lim x_n=r\},$$

equipped with the supremum norm.

Given a compact space $K$, we denote by $C(K)$ the space of continuous functions over $f$ with the supremum norm. Given $U$ a locally compact space, we denote by $C_0(U^*)$ the space of continuous functions over the one point compactification $U^*$ that vanish at the point at infinite. 

It is easy to see that for any ideal $\mathcal I$, $C_0(U_\mathcal I^*)$, $ c_{0,\mathcal I}$, $ c_{\mathcal I}$, and $ C(U_\mathcal I^*)$ are isomorphic as Banach spaces, although not as Banach lattices.
It is also easy to see that for any  AD family $\mathcal A$, $C_0(K_\mathcal A)\subseteq C_0(U_{\mathcal I(\mathcal A)}^*)$, and that if we consider the closed linear subspace of $\ell_\infty$ spanned by $c_0$ together with $\{\chi_A:A\in \mathcal A\}$, then this space is isomorphic to $C_0(K_\mathcal A)$, furthermore, $C_0(K_\mathcal A)$ and $ C(K_\mathcal A)$ are isomorphic as Banach spaces.

Given a topological space $(X,\tau)$ we denote by $\text{Borel(X)}$ the family of Borel sets of $X$, that is, the $\sigma$-algebra generated by $\tau$.

Given a compact space $K$, denote by $\mathcal M(K)$ the space of Radon (signed $\sigma$-additive) measures equipped with the norm $$\norm\mu=\sup\{\abs{\mu(A)}+\abs{\mu(B)}:A,B\in \text{Borel}(K),A\cap B= \varnothing\}.$$ Recall that the \emph{Riesz representation theorem} states that $\mathcal M(K)$ is isometrically isomorphic to $C(K)^*$, the dual space of $C(K)$.

Given a Boolean algebra $\mathbb B$, $\mu:\mathbb B\to \mathbb R$ is a (signed) measure if it is finitely additive, $\mu(\varnothing)=0$, and bounded. Denote by $\text{ba}(\mathbb B)$ the set of all measures over $\mathbb B$, equipped with the norm of \emph{total variation}, i.e., $$\norm{\mu}=\sup\{\abs{\mu(A)}+\abs{\mu(B)}:A,B\in \mathbb B, A\land B=0_\mathbb B\}.$$ It is a standard fact that any $\mu \in \text{ba}(\mathbb B)$ has a unique extension to a Radon measure $\hat\mu:\text{Borel}(\St(\mathbb B))\to \mathbb R$, regular with respect to $\mathbb B\simeq \Clop(\St(\mathbb B))$, i.e., for any $B\in \text{Borel}(\St(\mathbb B))$, $\hat\mu(B)=\sup\{\mu(A):A^*\subseteq B\}$, where $A^*=\{x\in \St(\mathbb B):A\in x\}$. Furthermore, $\norm{\mu}=\norm{\hat\mu}$, so $\mathcal M(\St(\mathbb B))$ is isometrically isomorphic to $\text{ba}(\mathbb B)$.

For an ultrafilter $\mathcal U$ on $\omega$ we denote $\delta_\mathcal U$ its Dirac measure; the finitely additive measure on $\omega$ such that $\delta_{\mathcal U}(A)=1$ if and only if $A\in \mathcal U$.

\section{Grothendieck ideals}

Before proceeding we require a definition.

\begin{defi}
    Let $\mathcal I$ be an ideal. A sequence of  elements of $\mathcal I$, $(I_n)_{n\in \omega}$, 
    and a sequence of finitely additive measures over $\mathcal P(\omega)$ of norm bounded by one, $(\mu_n)_{n\in \omega}$, are an anti-Grothendieck pair withnessed by $\epsilon>0$  if 
    
    \begin{enumerate}
        \item For any $I\in \mathcal I$, $\lim_{n\to \infty}\mu_n(I)=0$, and
        \item For any $n\in \omega$, $\abs{\mu_n(I_n)}>\epsilon$.
    \end{enumerate}

\end{defi}

The following result is our main characterization. In what follows we will mainly use the equivalence between (1) and (4).

\begin{theorem}\label{Main Grothendieck}
    Let $\mI$ be an ideal. The following conditions are equivalent:
    
    \begin{enumerate}
        \item $c_{0,\mI}$ does not have the Grothendieck property.
\item There is a an anti-Grothendieck pair $(I_n)_{n\in \omega}$, $(\mu_n)_{n\in \omega}$. 
        
\item There is an anti-Grothendieck pair $(I_n)_{n\in \omega}$, $(\mu_n)_{n\in \omega}$, such that the elements of $(I_n)_{n\in \omega}$ are pairwise disjoint. 

\item There is an anti-Grothendieck pair $(I_n)_{n\in \omega}$, $(\mu_n)_{n\in \omega}$ witnessed by 1, such that the elements of $(I_n)_{n\in \omega}$ are pairwise disjoint, the elements of $(\mu_n)_{n\in \omega}$ are non-negative, normalized, and for any $n\in \omega$, $\supp(\mu_n)\subseteq I_n$.

    \item There is a sequence of finitely additive non-negative measures over $\omega$ of norm bounded by 1, $(\mu_{n})_{n\in \omega}$, a sequence $(I_n)_{n\in \omega}\subseteq \mathcal I$ and a bounded sequence of reals $(a_n)_{n\in \omega}$, such that:
    \begin{enumerate}
    \item For any $I\in \mathcal I$, $\lim_{n\to\infty}\mu_n(I)=0$
        \item For any $n\in \omega$, $a_n\mu_n(I_n)=1$.
        
    \end{enumerate}

    \end{enumerate} 
\end{theorem}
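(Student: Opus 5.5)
We prove the cycle $(1)\Rightarrow(2)\Rightarrow(3)\Rightarrow(4)\Rightarrow(5)\Rightarrow(1)$. The tool throughout is duality: by Riesz, $c_{0,\mI}^*\cong C_0(U_\mI^*)^*$ consists of Radon measures on $U_\mI^*$ vanishing at $p_\mI$. Equivalently, these are finitely additive bounded measures on $\mathcal P(\omega)$ restricted to how they act on $c_{0,\mI}$. We use the following dictionary. A functional $\mu$ on $c_{0,\mI}$ is determined by its values $\mu(I)$, $I\in\mI$, because the simple functions $\sum a_i\chi_{I_i}$ with $I_i\in\mI$ are dense in $c_{0,\mI}$. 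Moreover, a bounded sequence of functionals is weak$^*$ null iff $\mu_n(I)\to 0$ for every $I\in\mI$. Since $U_\mI$ is an open subset of $\beta\omega$, each such functional extends to a finitely additive measure on $\mathcal P(\omega)$ of the same norm, concentrated on $U_\mI$; conversely, any $\mu\in\text{ba}(\mathcal P(\omega))$ restricts to a functional on $c_{0,\mI}\subseteq\ell_\infty$ of norm at most $\norm{\mu}$.

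\emph{(1)$\Rightarrow$(2).} Take a weak$^*$ null sequence $(\nu_n)$ in $c_{0,\mI}^*$ that is not weakly null. We may assume $\norm{\nu_n}\le 1$. By the dictionary, condition (1) of an anti-Grothendieck pair holds. Since $\nu_n$ is not weakly null, by Eberlein–Šmulian and Dieudonné–Grothendieck type characterizations (or Cembranos' criterion, Corollary \ref{Grothendieck for C(K)}) the sequence is not uniformly countably additive on the Borel sets of $U_\mI$. Hence there exist $\epsilon>0$, a subsequence, and pairwise disjoint open sets on which $|\nu_n|$ is large. By regularity we can shrink these to clopen sets of the form $\overline{I}$ with $I\in\mI$. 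We will use the standard Dieudonné–Grothendieck fact: a bounded sequence in $\mathcal M(K)$ is weakly null iff it converges to $0$ on every Borel set, which in turn happens iff $\sup_n|\nu_n|(V_k)\to 0$ for disjoint open $V_k$. From this we get $I_n\in\mI$ with $|\nu_n(I_n)|>\epsilon$, after passing to subsequences.

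\emph{(2)$\Rightarrow$(3).} Extract disjoint sets by a standard Rosenthal-lemma/gliding hump argument. Inductively choose $n_k$ such that $\mu_{n_k}$ is small on $\bigcup_{j<k}I_{n_j}$; this is possible because that union lies in $\mI$, where $\mu_n\to0$. Then replace $I_{n_k}$ by $I_{n_k}\setminus\bigcup_{j<k}I_{n_j}$, which keeps $|\mu_{n_k}(\cdot)|>\epsilon/2$.

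\emph{(3)$\Rightarrow$(4).} This is the main technical step. Given disjoint $I_n$ with $|\mu_n(I_n)|>\epsilon$, split $\mu_n=\mu_n^+-\mu_n^-$ via Hahn decomposition on $\mathcal P(\omega)$. One of $\mu_n^\pm(I_n\cap P_n)$ exceeds $\epsilon/2$, and we replace $I_n$ by the corresponding subset $J_n\in\mI$. Set $\lambda_n=\mu_n^{\pm}\restriction J_n/\mu_n^\pm(J_n)$, which is non-negative, normalized, with $\supp\lambda_n\subseteq J_n$, and $\lambda_n(J_n)=1$. The difficulty is preserving condition (1): $\mu_n\to0$ on $\mI$ does not immediately give $|\mu_n|\to 0$ on $\mI$. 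To handle this, we first use Rosenthal's disjointification lemma together with the fact that $\mu_n\restriction I$ is uniformly controlled for each fixed $I\in\mI$. For a fixed $I$, we have $\lambda_n(I)=\mu_n^\pm(J_n\cap I)/\mu_n^\pm(J_n)$. We would argue by contradiction: if $\lambda_n(I)\not\to0$ for some $I$, then $|\mu_n|(I)\not\to 0$ along a subsequence. This yields sets $I'\subseteq I$ in $\mI$ with $|\mu_n(I')|$ bounded below, and we restart with those. To make this rigorous we would pass to a subsequence and use disjointness of the $J_n$ with a diagonal union $\bigcup_k (J_{n_k}\cap P^\pm_{n_k}\cap I)\subseteq I$. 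On this set the values of $\mu_{n_k}$ equal, up to small errors from earlier pieces, $\pm|\mu_{n_k}|(J_{n_k}\cap I)$. Since the set lies in $\mI$, this contradicts $\mu_n(\cdot)\to0$ on $\mI$. This argument is the crux. It requires choosing the subsequence so that $|\mu_{n_k}|$ of earlier and later $J$'s inside $I$ is small, which we obtain via Rosenthal's lemma applied to the disjoint family $(J_n\cap I)$.

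\emph{(4)$\Rightarrow$(5)} is trivial with $a_n=1$. For \emph{(5)$\Rightarrow$(1)}, note first that $\mu_n(I_n)=1/a_n\ge 1/\sup|a|$ is bounded below. The $\mu_n$ restricted to $c_{0,\mI}$ are weak$^*$ null by (a). If they were weakly null, then for the disjoint pieces obtained by gliding hump as in (2)$\Rightarrow$(3), we could take $f=\sum_k \chi_{I_{n_k}}$. This $f$ lies in $\ell_\infty$ but not necessarily in $c_{0,\mI}$, so instead we argue in $C_0(U_\mI^*)^{*}$. There the Borel set $\bigcup_k\overline{I_{n_k}}$ has measure under $\hat\mu_{n_k}$ roughly equal to $\mu_{n_k}(I_{n_k})$, which is bounded below. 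This contradicts weak nullness via Dieudonné–Grothendieck, since weakly null sequences of measures converge to $0$ on every Borel set. Therefore the sequence is weak$^*$ null but not weakly null, and $c_{0,\mI}$ is not Grothendieck.
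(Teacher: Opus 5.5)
\paragraph{The gap.} Your cycle breaks at $(2)\Rightarrow(3)$, and this step cannot be repaired. The gliding hump makes $\mu_{n_k}$ small on the fixed set $A_k=\bigcup_{j<k}I_{n_j}\in\mathcal I$. But to keep $\abs{\mu_{n_k}(I_{n_k}\setminus A_k)}>\epsilon/2$ you need $\mu_{n_k}(I_{n_k}\cap A_k)$ to be small, and $I_{n_k}\cap A_k$ is a subset of $A_k$ that moves with $k$. For signed measures, $\mu_n(B)\to0$ for every $B\subseteq A$ does not give $\abs{\mu_n}(A)\to 0$.

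In fact, with signed measures (which the paper's definition of ``measure'' allows), condition (2) is strictly weaker than (1). Here is a counterexample.
\begin{itemize}
\item Take $\mathcal I=\mathcal U^*$ for a non-principal ultrafilter $\mathcal U$. Then $c_{0,\mathcal I}$ is the kernel of $\mathcal U$-$\lim$, a closed hyperplane of $\ell_\infty$, hence Grothendieck.
\item Let $\lambda$ be a finitely additive probability on $\mathcal P(\omega)$ with independent sets $R_n$, $\lambda(R_n)=1/2$. For instance, identify $\omega$ with a countable dense subset of $2^\omega$ and extend Lebesgue measure from traces of clopen sets.
\item Put $\mu_n=\lambda(\,\cdot\cap R_n)-\lambda(\,\cdot\setminus R_n)$, so $\norm{\mu_n}=1$. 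The functions $2\chi_{\overline{R_n}}-1$ are orthonormal in $L^2(\hat\lambda)$, so Bessel's inequality gives $\mu_n(B)\to0$ for every $B\subseteq\omega$.
\item Let $I_n$ be whichever of $R_n$, $\omega\setminus R_n$ lies in $\mathcal U^*$. Then $\abs{\mu_n(I_n)}=1/2$.
\end{itemize}
This is an anti-Grothendieck pair for a Grothendieck ideal. Since $(3)\Rightarrow(1)$ is true, no argument can derive (3) from (2).

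\paragraph{The repair.} Keep what your first step already gives you. Grothendieck's criterion in your $(1)\Rightarrow(2)$ yields pairwise disjoint open sets. Shrink them to clopen sets $\overline{I_n}$ and discard the at most one containing $p_{\mathcal I}$; this proves $(1)\Rightarrow(3)$ directly, which is exactly the paper's first step. Then $(3)\Rightarrow(4)\Rightarrow(5)\Rightarrow(1)$, together with the trivial $(4)\Rightarrow(2)$, gives everything except $(2)\Rightarrow(1)$.

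That remaining implication holds once the $\mu_n$ in (2) are required to be non-negative, and fails otherwise. With non-negative measures your gliding hump is valid, because $\mu_{n_k}(I_{n_k}\cap A_k)\le\mu_{n_k}(A_k)$. The paper's own $(2)\Rightarrow(5)$, via $a_n=1/\mu_n(I_n)$, tacitly makes the same non-negativity assumption.

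\paragraph{The remaining steps.} These are sound in substance.
\begin{itemize}
\item In $(3)\Rightarrow(4)$, a finitely additive measure on $\mathcal P(\omega)$ need not have an exact Hahn set. Take the Hahn decomposition of $\hat\mu_n$ on $\beta\omega$ and approximate it by clopen sets, as the paper does.
\item Your Rosenthal-lemma argument for $\mu_n(J_n\cap I)\to 0$ is correct. Phillips' lemma applied to $M\mapsto\mu_n\big(\bigcup_{m\in M}I\cap J_m\big)$ does the same job. It covers a point the paper glosses over: there the identity $\nu_n(I)=\abs{\mu_n(J)}/\abs{\mu_n(J_n)}$ ignores the contributions of the $J_m$ with $m\neq n$.
\item Your gliding hump in $(5)\Rightarrow(1)$ is legitimate there, since the measures are non-negative. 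It supplies what the paper's one-line surjectivity claim for $T$ actually needs; without disjoint supports, $T(a_n\chi_{I_n})\neq e_n$ in general.
\end{itemize}
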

\begin{proof}
    $(1)\to (3)$. Let us work in $C(U_\mathcal I^*)$, as it is isomorphic to $c_{0,\mathcal I}$, it cannot be Grothendieck.  Then there is a $w^*$-null sequence $(\nu_n)_{n\in \omega}\subseteq B_{C(U_\mathcal I^*)^*}$ that is not $w$-null. Since it is not $w$-null, $\{\nu_n:n\in \omega\}$ is not relatively weakly compact.  By the Riesz representation theorem we can represent $(\nu_n)_{n\in \omega}$ as a bounded sequence in $M(U_\mathcal I^*)$. By  Grothendieck's characterization of relatively weakly compact subsets of spaces of measures \cite[Théorème 2]{GrothendieckSurLA} (See also \cite[Theorem 5.3.2]{Kalton}) we can find a subsequence of $(\nu_n)_{n\in \omega}$, a sequence of open disjoint sets $(U_n)_{n\in \omega}$, and $\epsilon>0$ such that after re indexing we get that for any $n\in \omega$, $\abs{\nu_n(U_n)}>2\epsilon$. Using regularity we may find for any $n\in \omega$ $I_n\in \mathbb B(\mathcal I)$, such that $\overline{I_n}\subseteq U_n$ and $\abs{\nu_n(\overline{I_n})}>\epsilon$. Since the sequence $(I_n)_{n\in \omega}$ must be disjoint, there is at most one $m\in \omega$ such that $I_m\in \mathcal I^*$. So we may re index once again so that for any $n\in \omega$, $I_n\in \mathcal I$.
For any $n\in \omega$ extended $\nu_n$ to $\mu_n:\mathcal P(\omega)\to[-1,1]$. Since for any $I\in \mathcal I$, $\chi_I\in C(U_\mathcal I^*)$, hence $\lim_{n\to \infty}\mu_n(I)=\lim_{n\to \infty}\nu_n(I)=0$.

$(3)\to (4)$. Fix $n\in \omega$, consider $\hat{\mu}_n:\text{Borel}(\beta \omega)\to [-1,1]$. By Hanh's decomposition there are $A_n^+,A_n^-\in\text{Borel}(\beta\omega))$ such that $A_n^+\sqcup A_n^-=\beta\omega$, $A_n^+$ is positive and $A_n^-$ is negative. Use regularity to find $B_n^+,B_n^-\subseteq \omega$ such that $\overline{B_n^+}\subseteq A^+$, $\overline{B_n^-}\subseteq A^-$, $\mu_n(B_n^+)>\hat \mu_n(A_n^+)-\epsilon/6$, $\mu_n(B_n^-)<\hat \mu_n(A_n^-)+\epsilon/6$. 

\textbf{Claim} For any $C\subseteq \omega$, $\abs{ \mu_n(C\setminus(B_n^+\cup B_n^-) )}<\epsilon/3$.
\,

Pick $C\subseteq \omega$, 

$$\abs{ \mu_n(C\setminus(B_n^+\cup B_n^-) )}=$$
$$\abs{\hat\mu_n\Big(\Big(\overline{C}\cap \Big(A_n^+\sqcup A_n^-\Big)\Big)\setminus\Big( \overline{B_n^+}\cup \overline{B_n^-}\Big)\Big)}=$$
$$\abs{\hat\mu_n\bigg(\Big(\Big(\overline{C}\cap A_n^+\Big)\setminus \overline{B_n^+}\Big)\sqcup \Big( \Big(\overline{C}\cap A_n^-\Big)\setminus \overline{B_n^-}\Big)\bigg)}=$$
$$\abs{\hat\mu_n\Big(\overline{C}\cap A_n^+\Big)-\mu_n\Big(C\cap B_n^+\Big)+\hat\mu_n\Big(\overline{C}\cap A_n^-\Big)-\mu_n\Big(C\cap B_n^-\Big)}\leq$$
$$\abs{\hat\mu_n\Big(\overline{C}\cap A_n^+\Big)-\mu_n\Big(C\cap B_n^+\Big)}+\abs{\hat\mu_n\Big(\overline{C}\cap A_n^-\Big)-\mu_n\Big(C\cap B_n^-\Big)}<\epsilon/3$$

This computation establishes the claim.

As $\epsilon<\abs{\mu_n(I_n)}\leq \abs{\mu_n(I_n\cap B^+)}+\abs{\mu_n(I_n\cap B^-)}+\abs{ \mu_n(I_n\setminus(B^+\cup B^-) )}$,
either $\abs{\mu_n(I_n\cap B^+)}>\epsilon/3$ or $\abs{\mu_n(I_n\cap B^-)}>\epsilon/3$. Define $J_n\subseteq I_n$ as $J_n=I_n\cap B^+$ or $J_n=I_n\cap B^-$ making sure that
$\abs{\mu_n(J_n)}>\epsilon/3$. Further, define $\nu'_n=\mu\lvert_{J_n}$ if $J_n=I_n\cap B^+$ or $\nu'_n=-\mu\lvert_{J_n}$ in the other case. In either case $\nu'_n$ is a nonnegative measure such that $\nu'(J_n)>\epsilon/3$ and the sequence $(\nu_n)_{n\in \omega}$ has disjoint supports. Define for any $n\in \omega$, $\nu_n=\frac{\nu_n'}{\nu_n'(J_n)}$. Then obviously $\norm{\nu_n}=1$ and $\nu_n(J_n)=1$.

We are left with showing that for any $I\in \mathcal I$, $\lim_{n\to \infty}\nu_n(I)=0$. Fix your favorite $I\in \mathcal I$ and consider $J=I\cap (\bigcup_{n\in \omega}J_n)\in \mathcal I$. Then for any $n\in \omega$, $\nu_n(I)=\abs{\mu_n(J)}/\abs{\mu_n(J_n)}$, so $$\lim_{n\to \infty} \nu_n(I)=\lim_{n\to \infty}\frac{\abs{\mu_n(J)}}{\abs{\mu_n(J_n)}}\leq  \lim_{n\to \infty}\frac{\abs{\mu_n(J)}}{\epsilon/3}=0.$$

$(4)\to (5)$ and $(2)\to (5)$.  Consider $a_n=1/\mu_n(I_n)$.

$(5)\to (1)$. Consider the operator $T:c_{0,\mathcal I}\to c_0$ defined by $T(x)(n)=\int_\omega x d\mu_n$. It is clear that for any $n\in \omega$, $a_n\chi_{I_n}\in c_{0,\mathcal I}$. Therefore  $T$ is surjective, in particular it is not weakly compact. Therefore, $c_{0,\mathcal I}$ is not Grothendieck.

$(4)\to (2)$ is trivial.

\end{proof}

This result implies that for spaces of the form $c_{0,\mathcal I}$ having the Grothendieck, having the positive Grothendieck property, and having the weak Grothendieck property are all equivalent. \begin{footnote}{Recall that a Banach lattice $X$, 1) has the \textit{positive Grothendieck property} if any weakly$^*$ null sequence $(x^{*}_{n})_{n\in \omega}$ of \textit{positive} functionals is also weakly null, 2) has the \textit{weak Grothendieck property} if any weakly$^*$ null sequence $(x^{*}_{n})_{n\in \omega}$ of functionals with \textit{disjoint} terms is also weakly null. See \cite{Wnuk} and \cite{Machrafi} for the respective definitions and some results regarding these properties.} Note that we cannot present the same result for spaces of the form $c_{\mathcal I}$ since it is known that the space of convergent sequence $c$ has the positive Grothendieck property but does not have the weak Grothendieck property nor, obviously, the Grothendieck property. 
    
\end{footnote} 

The reader may wonder if we could relax the hypothesis of the theorem by just assuming there is a pair $(X_n)_{n\in \omega}$ of subsets of $\omega$ and a sequence of measures $(\mu_n)_{n\in \omega}$ that jointly fulfill conditions (1) and (2) in the definition of an anti-Grothendieck pair. The following example shows that we cannot relax the hypothesis in such a way.

\begin{example}
Consider a discrete sequence of ultrafilters. $(\mathcal U_n)_{n\in \omega}$, and $\mathcal I=\bigcap_{n\in \omega}\mathcal U_n^*$. Define $\mu_n=\delta_{\mathcal U_n}$. Since the sequence is discrete, we may find for any $n\in \omega$, $X_n\in \mathcal U_n$ such that for any $m\neq n$, $X_m\notin \mathcal U_m$. It is clear that for any $n\in \omega$, $X_n\in \mathcal I^+$. It is also clear that for any $I \in \mathcal I$
, $\lim_{n\in \omega}\mu_n(I)=0$. But, since $\mathcal P(\omega)/\mathcal I$ is $\sigma$-centered, by the main theorem of \cite{Complementedideals}, $c_{0,\mathcal I}$ is complemented in $\ell_\infty$, so, in particular, it is Grothendieck.
\end{example}

The following results show that there are Borel Grothendieck ideals.

\begin{lemma}\label{nowhere dense}
Let $(I_n)_{n\in\omega}$ be a pairwise disjoint family of nowhere dense subsets of $\mathbb Q$ and let $(\mu_n)_{n\in\omega}$ be a sequence of finitely additive probability measures over $\mathbb Q$ such that $\mu_n(I_n)=1$ for every $n\in\omega$. There is $I\in \nwd$ such that $\lim_{n\to \infty}\mu_n(I)\neq 0$.
\end{lemma}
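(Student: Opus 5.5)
The plan is to build $I$ by a diagonal construction that picks a subsequence $n_0<n_1<\cdots$ and sets $J_k\subseteq I_{n_k}$ with $\mu_{n_k}(J_k)\geq 1/2$. We then put $I=\bigcup_k J_k$ and arrange that $I$ is nowhere dense. Since $\mu_{n_k}(I)\geq\mu_{n_k}(J_k)\ge 1/2$ for every $k$, the sequence $(\mu_n(I))_n$ cannot converge to $0$. The danger is that the union of infinitely many nowhere dense sets need not be nowhere dense. So for an enumeration $(U_k)_{k\in\omega}$ of the nonempty open intervals of $\mathbb Q$, we will choose open intervals $W_k\subseteq U_k$ that are disjoint from all $J_m$.

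We proceed in stages, keeping an infinite set $N_k\subseteq\omega$ of candidate indices, with $N_0=\omega$. At stage $k$, the already chosen set $I_{n_0}\cup\dots\cup I_{n_{k-1}}$ is nowhere dense, being a finite union. So $U_k$ contains an open interval $U_k'$ disjoint from its closure. Fix $M_k\geq 2^{k+2}$ and pairwise disjoint open subintervals $V^0,\dots,V^{M_k-1}$ of $U_k'$. For each $n\in N_k$, finite additivity and $\mu_n(\mathbb Q)=1$ give some $i<M_k$ with $\mu_n(V^i)<1/M_k\le 2^{-k-2}$. Since there are only finitely many $i$, the pigeonhole principle yields one $i$ and an infinite $N_{k+1}\subseteq N_k$ such that this $i$ works for all $n\in N_{k+1}$. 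Put $W_k=V^i$, so that $\mu_n(W_k)<2^{-k-2}$ for all $n\in N_{k+1}$. Then choose $n_k\in N_{k+1}$ larger than $n_{k-1}$. Note that $n_k\in N_{j+1}$ for every $j\le k$, because the sets $N_j$ are decreasing.

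Define $J_k=I_{n_k}\setminus(W_0\cup\dots\cup W_k)$. Then
\[
\mu_{n_k}(J_k)\geq 1-\sum_{j\le k}\mu_{n_k}(W_j)\geq 1-\sum_{j\le k}2^{-j-2}\geq \tfrac12 .
\]
To see that $I=\bigcup_k J_k$ is nowhere dense, fix $k$; we show $W_k\cap J_m=\varnothing$ for every $m$. If $m\geq k$, this holds because $J_m$ explicitly removes $W_k$. If $m<k$, then $J_m\subseteq I_{n_m}$, and $W_k\subseteq U_k'$ was chosen disjoint from the closure of $I_{n_0}\cup\dots\cup I_{n_{k-1}}$. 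Hence every basic open set $U_k$ contains an open interval $W_k$ missing $I$, so $I\in\nwd$. Together with $\mu_{n_k}(I)\ge1/2$ for all $k$, this proves $\lim_n\mu_n(I)\neq0$.

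The main obstacle is the step securing the inequality $\mu_n(W_k)$ small simultaneously for all later indices, since the $\mu_n$ are only finitely additive. This is handled by the pigeonhole over finitely many disjoint candidate intervals together with passing to the infinite set $N_{k+1}$. The disjointness of the $I_n$ is not essentially needed here. Only the nowhere density of each $I_n$ and of their finite unions is used.
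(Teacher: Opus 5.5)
Your proof is correct and is essentially the paper's argument: enumerate a base, pick $2^{k+2}$ disjoint open pieces of $U_k$ avoiding the previously selected sets, use pigeonhole to pass to an infinite index set on which one piece $W_k$ has measure at most $2^{-k-2}$, and set $J_k=I_{n_k}\setminus\bigcup_{j\le k}W_j$. The paper takes $I=\mathbb Q\setminus\bigcup_k W_k$ rather than your $\bigcup_k J_k$, which makes no real difference; one cosmetic nit is that pigeonhole over $M_k$ disjoint pieces only gives $\mu_n(V^i)\le 1/M_k$, not a strict inequality, but your final estimate only needs $\le$.
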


\begin{proof} Enumerate a basis for open sets of the rationals as $(U_n)_{n\in\omega}$. Given $(I_n)_{n\in\omega}$ and $(\mu_n)_{n\in\omega}$ as in the statement of the lemma we shall recursively construct for every $n\in\omega$
\begin{itemize}
    \item a non-empty open $W_n\subseteq U_n$,
    \item $A_n\in [\omega]^{\omega}$,
    \item $k_n\in A_n$, and 
    \item $J_{k_n}\subseteq I_{k_n} $
\end{itemize}
so that
\begin{enumerate}
    \item $(k_n)_{n\in\omega}$ is strictly increasing,
    \item $(A_n)_{n\in\omega}$ is $\subseteq$-decreasing,
    \item for all $j\in A_n$, $\mu_j (W_n) \leq \frac{1}{2^{n+2}}$, 
    \item $\bigcup_{n\in\omega} W_n \cap \bigcup_{n\in\omega} J_{k_n}=\emptyset$, and
    \item $\mu_{n_k}(J_{k_n})\geq \frac{1}{2}$.
\end{enumerate}

To start, set $A_{-1}=\omega$ and $k_{-1}=-1$. To construct $W_n, A_n, k_n$ and $Y_n$ find a pairwise disjoint family 
$$\{V_i: i< 2^{n+2}\}$$ 
of non-empty open subsets of 
$$U_n\setminus \bigcup_{m<n} J_{k_m}.$$
As all $J_{k_m}$ are nowhere dense and $\mathbb Q$ has no isolated points, this is easy to do.

Now, for each $m\in A_{n-1}$ there is an $i_m<2^{n+2}$ such that
$$\mu_m(V_{i_m})\leq \frac{1}{2^{n+2}}.$$
By the pigeonhole principle, there is an $i<2^{n+2}$ such that the set
$$A_n=\{ m\in A_{n-1}: i_m=i\}$$
is infinite. Let $W_n=V_i$, $k_n=\min A_n\setminus (k_{n-1} +1)$ and 
$$J_{k_n}= I_{k_n}\setminus\bigcup_{m\leq n} W_m .$$

(1)-(4) follow immediately from the definitions of these objects. To see (5), recall that $k_n\in \bigcap_{m\leq n} A_m$, hence $\mu_{n_k} (W_m)\leq \frac{1}{2^{m+2}}$ for every $m\leq n$. So 
$$\mu_{n_k}(\bigcup_{m\leq n} W_m) \leq \sum_{m\leq n} \mu_{n_k}(W_m) \leq \sum_{m\leq n}\frac{1}{2^{m+2}}\leq \frac12,$$
so 
$$\mu (J_{n_k})= \mu_{n_k} (I_{n_k}\setminus \bigcup_{m\leq n} W_m)\geq \frac12.$$
Define $I=\mathbb Q\setminus \bigcup_{n\in\omega} W_n$. $I$ is a nowhere dense subset of $\mathbb Q$ because $\bigcup_{n\in\omega} W_n$ is dense open due to the fact that $W_n$ is a non-empty open subset of $U_n$ for every $n$. Also, for every $n$
$$\mu_{k_n}(I)\geq \mu_{k_n}(J_{n_k})\geq\frac12,$$
by (4) and (5). Hence,  $\lim_{n\to \infty}\mu_n(I)\neq 0$.
\end{proof}

As a direct consequence, we get:

\begin{theorem}\label{little nowhere dense}
    $\nwd $ is a Grothendieck ideal.
\end{theorem}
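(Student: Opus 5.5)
We argue by contradiction, combining the equivalence $(1)\leftrightarrow(4)$ of Theorem \ref{Main Grothendieck} with Lemma \ref{nowhere dense}. Suppose $c_{0,\nwd}$ does not have the Grothendieck property. Since $\nwd$ is an ideal on the countable set $\mathbb Q$, Theorem \ref{Main Grothendieck} applies after identifying $\mathbb Q$ with $\omega$. Condition (4) then gives an anti-Grothendieck pair $(I_n)_{n\in\omega}$, $(\mu_n)_{n\in\omega}$ with the following properties:
\begin{enumerate}
\item the sets $I_n\in\nwd$ are pairwise disjoint;
\item each $\mu_n$ is a non-negative normalized finitely additive measure on $\mathcal P(\mathbb Q)$ with $\supp(\mu_n)\subseteq I_n$;
\item $\lim_{n\to\infty}\mu_n(I)=0$ for every $I\in\nwd$.
\end{enumerate}

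The first step is to check that these data satisfy the hypotheses of Lemma \ref{nowhere dense}. Each $\mu_n$ is non-negative with norm $1$, so $\mu_n(\mathbb Q)=1$ and $\mu_n$ is a finitely additive probability measure. Since $\supp(\mu_n)\subseteq I_n$, we have $\mu_n(\mathbb Q\setminus I_n)=0$. If the support is read as a closed subset of $\beta\omega$ (the Stone space), one first notes that $\overline{\mathbb Q\setminus I_n}$ is clopen and disjoint from the support, so it has measure $0$. Hence $\mu_n(I_n)=1$ for every $n$. Finally, the $I_n$ are pairwise disjoint nowhere dense subsets of $\mathbb Q$.

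The second step applies Lemma \ref{nowhere dense}, which yields some $I\in\nwd$ with $\lim_{n\to\infty}\mu_n(I)\neq0$; more precisely, $\mu_{k_n}(I)\ge\frac12$ along a subsequence. This contradicts condition (a) of the anti-Grothendieck pair. Therefore $c_{0,\nwd}$ is Grothendieck, that is, $\nwd$ is a Grothendieck ideal.

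The only point needing care is the translation of $\supp(\mu_n)\subseteq I_n$ into $\mu_n(I_n)=1$, together with normalization giving total mass one. All the combinatorial work was already done in Lemma \ref{nowhere dense}, so this bookkeeping is the main, and mild, obstacle.
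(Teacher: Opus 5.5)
Your proof is correct and follows the paper's own route: the paper obtains this theorem as a direct consequence of Lemma \ref{nowhere dense} combined with the implication $(1)\to(4)$ of Theorem \ref{Main Grothendieck}. Your two extra checks spell out what the paper leaves implicit. First, normalization together with $\supp(\mu_n)\subseteq I_n$ gives probability measures with $\mu_n(I_n)=1$. Second, the bound $\mu_{k_n}(I)\ge\frac12$ along a subsequence rules out $\lim_n\mu_n(I)=0$.
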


Since $\nwd$ is a Borel ideal, it is meager. Therefore, $c_{0,\nwd}$ is not complemented in $\ell_\infty$, see \cite{Complementedideals}. Therefore, $c_{0,\mathcal I}$ is not an injective Banach space, in particular, $c_{0,\nwd}\not\simeq \ell_\infty$.

The following easily follows from \cite[Corollary D]{Marciszewski-Sobota}.

\begin{proposition}
    Let $\mathcal I$ be an ideal. There is a sequence of finitely supported positive measures $(\mu_n)_{n\in \omega}$ such that for any $I\in\mathcal I$, $\lim_{n\in \omega}\mu_n(I)=0$ if and only if $\mathcal I\leq_K\mathcal Z$.
\end{proposition}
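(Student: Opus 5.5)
The statement should be read with the $\mu_n$ taken to be finitely supported probability measures, that is, non-negative and normalized. Both directions go through the interval partition $P_k=[2^k,2^{k+1})$ that defines $\mathcal Z$. The idea is that a Katětov reduction to $\mathcal Z$ is the same thing as a sequence of finitely supported probability measures, each with rational weights of denominator $2^k$. The forward direction pushes forward uniform measures. The converse discretizes the given measures and spreads them over the blocks $P_k$.

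($\Leftarrow$) Suppose $f:\omega\to\omega$ witnesses $\mathcal I\leq_K\mathcal Z$, so $f^{-1}(A)\in\mathcal Z$ for every $A\in\mathcal I$. Let $\mu_k$ be the push-forward under $f$ of the uniform probability measure on $P_k$:
$$\mu_k(A)=\frac{\abs{f^{-1}(A)\cap P_k}}{2^k}.$$
Each $\mu_k$ is a finitely supported non-negative normalized measure. For $A\in\mathcal I$ we have $f^{-1}(A)\in\mathcal Z$, so $\mu_k(A)\to 0$ by the definition of $\mathcal Z$.

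($\Rightarrow$) Let $(\mu_n)_{n\in\omega}$ be finitely supported probability measures with $\mu_n(I)\to0$ for all $I\in\mathcal I$, and put $s_n=\abs{\supp(\mu_n)}$. Choose a non-decreasing, unbounded sequence $(n_k)_{k\in\omega}$ with $s_{n_k}/2^k\to0$. For instance, let $n_k$ be the largest $n\leq k$ with $\max_{m\leq n}s_m\leq 2^{k/2}$, and $n_k=0$ if there is none. Then $(n_k)$ is non-decreasing, it is unbounded since each $s_m$ is finite, and $s_{n_k}\le 2^{k/2}$.

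On the block $P_k$, define $f$ so that each point $j\in\supp(\mu_{n_k})$ receives either $\lfloor 2^k\mu_{n_k}(\{j\})\rfloor$ or $\lceil 2^k\mu_{n_k}(\{j\})\rceil$ elements of $P_k$. The choice between floor and ceiling is made so that the counts add up to exactly $2^k$, and each count differs from $2^k\mu_{n_k}(\{j\})$ by less than $1$. Then for every $A\subseteq\omega$,
$$\frac{\abs{f^{-1}(A)\cap P_k}}{2^k}\leq \mu_{n_k}(A)+\frac{s_{n_k}}{2^k}.$$
If $A\in\mathcal I$, the first term tends to $0$ because $n_k\to\infty$ and $\mu_n(A)\to0$. (Repetitions in the sequence $(n_k)$ do no harm, since it is non-decreasing and unbounded.) The second term tends to $0$ by the choice of $n_k$. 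Hence $f^{-1}(A)\in\mathcal Z$, so $f$ witnesses $\mathcal I\leq_K\mathcal Z$.

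The main obstacle is this rounding step. The supports of the $\mu_n$ can grow arbitrarily fast, so the measures cannot simply be matched to the blocks one-to-one. Slowing down the index through $n_k$ is exactly what makes the discretization error $s_{n_k}/2^k$ vanish. With that in place, the rest is bookkeeping, and the result is also a direct consequence of \cite[Corollary D]{Marciszewski-Sobota}.
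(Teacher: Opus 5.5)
Your proof is correct, and it takes a different route from the paper, which gives no argument of its own: it only says that the proposition ``easily follows'' from \cite[Corollary D]{Marciszewski-Sobota}. That result is formulated for Josefson--Nissenzweig sequences of finitely supported measures on the spaces $\omega\cup\{p_{\mathcal F}\}$ determined by a filter $\mathcal F$ (here $\mathcal F=\mathcal I^*$), so some translation is needed to reach the statement as written.

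You instead prove the equivalence directly. For one direction you push forward the uniform measures on the blocks $[2^k,2^{k+1})$. For the other you slow down the index through $(n_k)$ so that the rounding error $s_{n_k}/2^k$ vanishes, and this yields an explicit Kat\v{e}tov reduction.

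Reading the statement with probability measures is necessary. With merely positive measures, $\mu_n=2^{-n}\delta_0$ would satisfy the left-hand side for every ideal, including $\mathcal U^*$, which is not Kat\v{e}tov below $\mathcal Z$. Your reading also matches how the proposition is used later in the paper.

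What the citation buys is the link to the signed-measure and Josefson--Nissenzweig framework, and to non-Grothendieck $C(K)$ spaces containing such subspaces. What your argument buys is a short, self-contained, constructive proof that makes the role of normalization visible.

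One harmless slip: with your convention $n_k=0$ when no admissible $n$ exists, the bound $s_{n_k}\le 2^{k/2}$ holds only once $s_0\le 2^{k/2}$. So it holds for all sufficiently large $k$, which is all the limit argument needs.
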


This implies that any ideal Kat\v{e}tov below $\mathcal Z$ is not Grothendieck.
We can generalize this result in the following way:

\begin{theorem}\label{Result on P}
    Let $\mathcal I$ be a $P$-ideal. $\mathcal I$ is Grothendieck if and only if $\mathcal I\not\leq_{KB} \mathcal Z$.
\end{theorem}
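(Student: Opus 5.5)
We use Theorem \ref{Main Grothendieck}, specifically the equivalence of (1) with (4) and (5). We read $\mathcal I\leq_{KB}\mathcal Z$ as: there is a finite-to-one $f:\omega\to\omega$ with $f^{-1}(A)\in\mathcal Z$ for every $A\in\mathcal I$. Write $P_k=[2^k,2^{k+1})$. The idea is to translate between finite-to-one reductions into $\mathcal Z$ and sequences of finitely supported measures: the normalized counting measures on the blocks $P_k$, pushed forward along $f$.

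\emph{If $\mathcal I\leq_{KB}\mathcal Z$ then $\mathcal I$ is not Grothendieck.} This direction does not use the $P$-ideal hypothesis. Let $f$ witness $\mathcal I\leq_{KB}\mathcal Z$ and define
$$\mu_k(A)=\frac{\abs{f^{-1}(A)\cap P_k}}{2^k}.$$
Each $\mu_k$ is a non-negative probability measure on $\mathcal P(\omega)$. For $A\in\mathcal I$ we have $f^{-1}(A)\in\mathcal Z$, so $\mu_k(A)\to 0$. Put $I_k=f[P_k]$, which is finite and hence in $\mathcal I$, and note $\mu_k(I_k)=1$. Taking $a_k=1$ gives condition (5) of Theorem \ref{Main Grothendieck}, so $c_{0,\mathcal I}$ is not Grothendieck.

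\emph{If $\mathcal I$ is not Grothendieck then $\mathcal I\leq_{KB}\mathcal Z$.}
\begin{enumerate}
\item By (4) of Theorem \ref{Main Grothendieck}, take pairwise disjoint $I_n\in\mathcal I$ and probability measures $\mu_n$ with $\supp(\mu_n)\subseteq I_n$, $\mu_n(I_n)=1$, and $\mu_n(I)\to0$ for all $I\in\mathcal I$.
\item Since $\mathcal I$ is a $P$-ideal, choose $I\in\mathcal I$ with $I_n\subseteq^* I$ for all $n$. Then $F_n=I_n\setminus I$ is finite, and $\mu_n(F_n)=1-\mu_n(I_n\cap I)\geq 1-\mu_n(I)\to1$.
\item Discarding finitely many $n$, we may assume $\mu_n(F_n)>1/2$. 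Set $\nu_n=\mu_n\lvert_{F_n}/\mu_n(F_n)$. These are finitely supported probability measures on the pairwise disjoint finite sets $F_n$. They satisfy $\nu_n(A)\le 2\mu_n(A)\to0$ for $A\in\mathcal I$, and a finitely additive measure on a finite set is just a weighted sum of point masses.
\item Choose a non-decreasing unbounded $n:\omega\to\omega$ that grows slowly, with each value taken only finitely often and $\abs{F_{n(k)}}/2^k\to0$.
\item On each block $P_k$ define $f$ to map $P_k$ onto $F_{n(k)}$ so that each $x\in F_{n(k)}$ receives either $\lfloor 2^k\nu_{n(k)}(\{x\})\rfloor$ or that number plus one points. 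Then
$$\Bigl|\frac{\abs{f^{-1}(A)\cap P_k}}{2^k}-\nu_{n(k)}(A)\Bigr|\le \frac{\abs{F_{n(k)}}}{2^k}\quad\text{for every } A\subseteq\omega.$$
\item The map $f$ is finite-to-one, because each $F_n$ is the image of only finitely many blocks and the $F_n$ are disjoint. For $A\in\mathcal I$ the block densities of $f^{-1}(A)$ tend to $0$, since $\nu_{n(k)}(A)\to0$ and the error term tends to $0$. Hence $f^{-1}(A)\in\mathcal Z$.
\end{enumerate}

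The main obstacle is making the reduction genuinely finite-to-one rather than merely Katětov, which is what the Marciszewski--Sobota proposition quoted above would give. This is why we first use the $P$-ideal hypothesis to pass to finitely supported measures on disjoint finite sets. We then handle the rounding by letting $n(k)$ grow slowly enough that $\abs{F_{n(k)}}$ is negligible compared with $2^k$. Points of $F_n$ with tiny weight may receive no preimage, which is harmless since $f$ need not be onto.
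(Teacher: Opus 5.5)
Your proof is correct. Its skeleton is the paper's: take the disjoint, non-negative, normalized pair from Theorem~\ref{Main Grothendieck}(4), use a pseudo-union $I\in\mathcal I$ of the $I_n$ to cut each $\mu_n$ down to the finite set $F_n=I_n\setminus I$ that carries most of its mass, and renormalize.

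The difference is the final step. The paper feeds these finitely supported measures into the proposition derived from Marciszewski--Sobota, and cites the same proposition for the easy direction. That proposition only produces a Kat\v{e}tov reduction, so the paper never actually addresses the finite-to-one requirement in $\leq_{KB}$.

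Your block allocation fills exactly this gap. You send $[2^k,2^{k+1})$ into $F_{n(k)}$ in proportion to $\nu_{n(k)}$, where $n(k)$ is non-decreasing, each value is attained only finitely often, and $\abs{F_{n(k)}}/2^k\to 0$. The pairwise disjointness of the $F_n$, together with the finiteness of each fibre $\{k:n(k)=n\}$, makes $f$ finite-to-one. The rounding error bound $\abs{F_{n(k)}}/2^k$ then gives $f^{-1}(A)\in\mathcal Z$ for every $A\in\mathcal I$.

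Your easy direction is just the relevant half of the cited proposition written out, and it uses neither the $P$-ideal hypothesis nor finite-to-oneness. So your argument shows that, for $P$-ideals, the following are equivalent: failure of the Grothendieck property, $\mathcal I\leq_{KB}\mathcal Z$, and $\mathcal I\leq_K\mathcal Z$. In particular, the theorem holds under either reading of the symbol $\leq_{KB}$, which the paper never defines.
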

\begin{proof}
    If $\mathcal I\leq_{KB} \mathcal Z$, then by the previous proposition $\mathcal I$ cannot be Grothendieck.

    Assume now that $\mathcal I$ is not Grothendieck. Find an anti-Grothendieck pair such that $(I_n)_{n\in \omega}\subseteq \mathcal I$ is a sequence of pairwise disjoint elements, and $(\mu_{n})_{n\in \omega}$ is a sequence of finitely additive nonnegative measures such that for any $n\in \omega$, $\mu_n(I_n)=1$. The fact that $\mathcal I$ is a $P$-ideal implies that there is $I\in\mathcal I$ such that for any $n\in \omega$, $I\setminus I_n=F_n\in[\omega]^{<\omega}$. $I\in \mathcal I$ implies there is $m\in \omega$ such that for any $n\ge m$, $\mu_n(I)<1/2$. This implies that for any $n\ge m$, $\mu(F_n)\ge 1/2$. Define $\nu_n=\mu_{n+m}\lvert_{F_m}$. Then $(\nu_n)_{n\in \omega}$ is a sequence as in the previous proposition. Therefore $\mathcal I\leq_{KB} \mathcal Z$.
\end{proof}

It is shown in \cite{HrusakBorel} that an analytic $P$-ideal $\mathcal I$ is non-pathological if and only if for any $X\in \mathcal I^+$, $\mathcal I\lvert_X\leq \mathcal Z$. So, we have the following result:

\begin{corollary}
    Let $\mathcal I$ be an analytic $P$-ideal. If $\mathcal I$ is pathological, then there is $X\in \mathcal I^+$ such that $\mathcal I\vert_X$ is Grothendieck.
\end{corollary}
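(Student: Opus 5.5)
The plan is to argue by contraposition: assume that for every $X\in\mathcal I^+$ the restriction $\mathcal I\vert_X$ is \emph{not} Grothendieck, and conclude that $\mathcal I$ is non-pathological. By the measure dichotomy of \cite{HrusakBorel}, recalled just before the statement, this amounts to showing $\mathcal I\vert_X\leq_K\mathcal Z$ for every $X\in\mathcal I^+$.

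Fix $X\in\mathcal I^+$. First I will check that $\mathcal I\vert_X$, viewed as an ideal on the countable base set $X$, is again a $P$-ideal. It is proper since $X\notin\mathcal I$, and it contains the finite subsets of $X$. Given $(A_n\cap X)_n$ with $A_n\in\mathcal I$, a pseudo-union $A\in\mathcal I$ of the $A_n$ yields the pseudo-union $A\cap X$. Transporting $X$ to $\omega$ by a bijection changes nothing relevant.

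The main step is to apply Theorem \ref{Result on P} to $\mathcal I\vert_X$. Since $\mathcal I\vert_X$ is not Grothendieck, that theorem gives $\mathcal I\vert_X\leq_{KB}\mathcal Z$, and a Katětov--Blass reduction is in particular a Katětov reduction, so $\mathcal I\vert_X\leq_K\mathcal Z$. As $X$ was arbitrary, \cite{HrusakBorel} shows that $\mathcal I$ is non-pathological. This is exactly the contrapositive of the corollary, so if $\mathcal I$ is pathological, some $X\in\mathcal I^+$ has $\mathcal I\vert_X$ Grothendieck.

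The only delicate point is the mismatch between orders. The characterization in \cite{HrusakBorel} uses $\leq_K$, while Theorem \ref{Result on P} is phrased with $\leq_{KB}$. I only need the direction ``not Grothendieck $\Rightarrow\ \leq_{KB}\mathcal Z\ \Rightarrow\ \leq_K\mathcal Z$'', which is safe because $\leq_{KB}$ implies $\leq_K$. I also need $\mathcal I\vert_X$ to be analytic, if the dichotomy is applied to restrictions; this holds because it is the image of $\mathcal I$ under the continuous map $A\mapsto A\cap X$.
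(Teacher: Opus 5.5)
Your argument is correct and is essentially the paper's: the paper also obtains the corollary by applying Theorem \ref{Result on P} to the $P$-ideals $\mathcal I\vert_X$, using that $\leq_{KB}$ implies $\leq_K$, and then invoking the measure dichotomy of \cite{HrusakBorel}. Your explicit checks that $\mathcal I\vert_X$ is a proper analytic $P$-ideal fill in details that the paper leaves implicit.
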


There are proofs of the existence of pathological $F_\sigma$ $P$-ideals in the literature, see \cite{Iliasold} sections 1.8 and 1.9, as well as \cite{Iliasnew} section 4.4. Nevertheless, we could not find an \textit{explicit} example of an ideal with these properties. Thus, we provide the following simple example.

\begin{example}

For every $n\in \omega$, $n>1$, let

$$W_n=\{U\subseteq 2^n: |U|=2^{n-1}\}$$

and $\varphi_n: \mathcal P(W_n)\to \mathbb R$ by

$$\varphi_n(A)=\frac{\min\{|F|: F\subseteq 2^n \land\forall U \in A(  F\cap U\neq\emptyset)\}}{2^{n-1}}.$$

Let $W=\bigcup_{n>1}W_n$ and $\varphi:\mathcal P(W)\to \mathbb R\cup\{\infty\}$ by

$$\varphi(A)= \sum_{n>1} \varphi_n(A\cap W_n).$$

We claim the following holds,
\begin{enumerate}
\item $\varphi$ is a lsco submeasure on $W$.
\item  $\mathcal H=\FIN(\varphi)=\Exh(\varphi)$ is a proper $F_\sigma$ P-ideal on $\bigcup_{n>1}W_n$,
\item  $\mathcal S\leq_\text{K}\mathcal H$, and
\item $\mathcal H$ is pathological and $\mathcal H\nleq_K \mathcal Z$.
\end{enumerate}

Checking (1) and the second equality in (2) are  easy exercises, the rest of the properties of (2) follow from them.  To see (3) note that the function $f:W\to \Omega$ defined by
$f(U)=\bigcup_{s\in U} \langle s\rangle$
is Kat\v etov. Since $\varphi(f^{-1}[I_x])=\sum_{n>1}\frac{1}{2^{n-1}}$, hence $f^{-1}[I_x]\in\mathcal H$ for every $x\in 2^\omega$. (4) Follows from (3) by the measure dichotomy of \cite{HrusakBorel} and the fact that $\mathcal S\nleq_K \mathcal Z$.

\end{example}

In particular, this shows that the natural conjecture that all tall $F_\sigma$ ideals may have isomoporhic $c_{0,\mathcal I}$ spaces is false.

 The Kat\v{e}tov order may be successfully used in other results:
\begin{theorem}\label{Z kills}
   For any ideal $\mathcal I$, if $\mathcal I\leq_{K}\mathcal Z\times \FIN$, then $\mathcal I$ is not Grothendieck.
\end{theorem}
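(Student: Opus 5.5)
Since every non-tall ideal is not Grothendieck (Corollary \ref{Grothendieck for C(K)} and the remark after it), we may assume that $\mathcal I$ is tall. The plan is to build an anti-Grothendieck pair for $\mathcal I$ directly and then apply Theorem \ref{Main Grothendieck}, implication $(2)\to(1)$. We first build suitable measures on $\omega^2$ that vanish in the limit on $\mathcal Z\times\FIN$, and then push them forward along a Kat\v{e}tov map. Fix $f:\omega^2\to\omega$ with $f^{-1}(I)\in\mathcal Z\times\FIN$ for every $I\in\mathcal I$. Write $C_n=\{n\}\times\omega$ for the columns of $\omega^2$.

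\textbf{Step 1.} For each column $n$ we find $I^{(n)}\in\mathcal I$ such that $f^{-1}(I^{(n)})\cap C_n$ is infinite. Suppose first that $f[C_n]$ is finite. Then some point $x$ has infinite preimage inside $C_n$, and we take $I^{(n)}=\{x\}$, which lies in $\mathcal I$ because $\mathcal I$ contains all finite sets. Suppose instead that $f[C_n]$ is infinite. By tallness there is an infinite $I^{(n)}\in\mathcal I$ with $I^{(n)}\subseteq f[C_n]$, and then $f^{-1}(I^{(n)})\cap C_n$ is infinite. Now choose a nonprincipal ultrafilter $\mathcal U_n$ on $C_n$ containing $f^{-1}(I^{(n)})\cap C_n$.

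\textbf{Step 2.} Let $B_k=[2^k,2^{k+1})$ and define measures on $\mathcal P(\omega^2)$ by
$$\lambda_k(A)=\frac{\abs{\{n\in B_k: A\cap C_n\in\mathcal U_n\}}}{2^k}.$$
Each $\lambda_k$ is a finitely additive probability measure. If $A\in\mathcal Z\times\FIN$, then $A\cap C_n\in\mathcal U_n$ forces $A\cap C_n$ to be infinite. The set of such $n$ lies in $\mathcal Z$, so its relative density in $B_k$ tends to $0$, and hence $\lambda_k(A)\to 0$.

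\textbf{Step 3.} Define the push-forward $\mu_k(X)=\lambda_k(f^{-1}(X))$ for $X\subseteq\omega$. Each $\mu_k$ is a nonnegative finitely additive probability measure on $\mathcal P(\omega)$, so it has norm $1$. For $I\in\mathcal I$ we have $f^{-1}(I)\in\mathcal Z\times\FIN$, so $\mu_k(I)\to 0$ by Step 2. Put $I_k=\bigcup_{n\in B_k}I^{(n)}$; this is in $\mathcal I$ as a finite union. Its preimage meets every $C_n$ with $n\in B_k$ in a set belonging to $\mathcal U_n$, so $\mu_k(I_k)=1$.

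Thus $(I_k)_{k\in\omega},(\mu_k)_{k\in\omega}$ is an anti-Grothendieck pair witnessed by any $\epsilon<1$, and Theorem \ref{Main Grothendieck} gives that $c_{0,\mathcal I}$ is not Grothendieck. The main obstacle is Step 1: the measures on $\omega^2$ must be chosen after the Kat\v{e}tov map $f$, so that their push-forwards concentrate on members of $\mathcal I$. Reducing to the tall case is exactly what makes that choice possible in every column.
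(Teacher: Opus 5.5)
Your proof is correct and is essentially the paper's argument. You reduce to the tall case, use tallness to find in the $f$-image of each column $\{n\}\times\omega$ a member of $\mathcal I$ hit infinitely often by that column, put an ultrafilter-type probability measure on it, average over the dyadic blocks $[2^k,2^{k+1})$ so that $\mathcal Z$ forces $\mu_k(I)\to 0$ for $I\in\mathcal I$, and take $I_k$ to be the finite union over the block. The only difference is presentational: choosing the ultrafilters on the columns of $\omega^2$ and pushing forward along $f$ gives the limit directly from $f^{-1}(I)\in\mathcal Z\times\FIN$ and avoids the paper's split between infinite and finite $J_n$. Since your measures are nonnegative with $\mu_k(I_k)=1$, your pair also satisfies condition (5) of Theorem~\ref{Main Grothendieck} verbatim with $a_k=1$.
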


\begin{proof}
    Consider a Kat\v{e}tov reduction $f:\omega^2\to \omega$ from $\mathcal Z\times \FIN$ to $\mathcal I$. If $\mathcal I$ is not tall then it is clear that  it is not Grothendieck. So, assume that $\mathcal I$ is tall.
    
    Let $X_n=f(\{n\}\times \omega)$. If $X_n\in \mathcal I$, let $J_n=X_n$, otherwise use that $\mathcal I$ is tall to find $J_n\in \mathcal I\cap[X_n]^{\omega}$. For any $n\in \omega$, if $J_n$ is infinite, find an ultrafilter such that $J_n\in\mathcal U_n$ and define $\nu_n=\delta_{\mathcal U_n}$. If $J_n$ is finite, define $F_n=\{i\in J_n: f^{-1}(\{i\}) \text{ is infinite}\}\neq \varnothing$ and $\nu_n=\frac{1}{\abs{F_n}}\sum_{i\in F_n}\delta_i$.

    Define $I_m=\bigcup_{n\in [2^{m},2^{m+1})}J_n$ and $\mu_m=\sum_{n\in [2^{m},2^{m})}2^{-m}\nu_n$. Notice that for any $m\in \omega$, $\mu_m(I_m)=1$. 
    
\smallskip
    
    \textbf{Claim} For any $I\in \mathcal I$, $\lim_{m\ \to \infty}\mu_m(I)=0$.

          Pick your favorite $I\in \mathcal I$ and $\epsilon>0$. Consider $$P_I=\{n\in \omega:\abs{\{m:(n,m)\in f^{-1}(I)\}}= \omega\},$$ then $P_I\in \mathcal Z$. Using this fact, find $M\in \omega$ such that for any $m\geq M$, $\frac{\abs{P_I\cap[2^m,2^{m+1})}}{2^m}<\epsilon$.

    For any $m\in \omega$, consider $n\in [2^m,2^{m+1})\setminus P_I$, then $I\cap J_n$ is finite. If $J_n$ is infinite, then $\nu_n(I)=0$. If $J_n$ is finite, then $I\cap F_n=\varnothing$, so $\nu_n(I)=0$. Therefore, for $m\ge M$,

    $$\mu_m(I)=\sum_{i\in P_I\cap[2^m,2^{m+1})}2^{-m}\nu_i(I)+ \sum_{i\in [2^m,2^{m+1})\setminus P_I}2^{-m}\nu_i(I)<\epsilon.$$

    By Theorem \ref{Main Grothendieck}, $\mathcal I$ is not Grothendieck.

\end{proof}

\begin{corollary}
    For any AD family $\mathcal A$, $c_{0,\mathcal I(\mathcal A)}$ and $C(K_\mathcal A)$ are not Grothendieck. 
\end{corollary}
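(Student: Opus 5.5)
The plan is to derive the corollary from Theorem \ref{Z kills}. For the ideal part it suffices to show $\mathcal I(\mathcal A)\leq_K \mathcal Z\times\FIN$. For the space $C(K_\mathcal A)$ we then transfer the failure of the Grothendieck property through the embedding of $C_0(K_\mathcal A)$ into $c_{0,\mathcal I(\mathcal A)}$ described in the preliminaries.

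\textbf{The Kat\v{e}tov reduction.} We may assume $\mathcal A$ is infinite. If $\mathcal A$ is finite, $\mathcal I(\mathcal A)$ is not tall (or not proper), so it is not Grothendieck by the remark after Corollary \ref{Grothendieck for C(K)}, and $K_\mathcal A$ is then a countable compact space with nontrivial convergent sequences. Choose distinct $A_n\in\mathcal A$ for $n\in\omega$ and make them pairwise disjoint by replacing $A_n$ with $A_n\setminus\bigcup_{k<n}A_k$. Enumerate each $A_n$ as $\{a_{n,m}:m\in\omega\}$ and define $f(n,m)=a_{n,m}$.

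Now take $I\in\mathcal I(\mathcal A)$, so $I\subseteq^* A'_0\cup\dots\cup A'_k$ with $A'_i\in\mathcal A$. For every $n$ outside the finite set $\{n: A_n\in\{A'_0,\dots,A'_k\}\}$, the set $A_n\cap I$ is finite by almost disjointness. So $\{m:(n,m)\in f^{-1}(I)\}$ is finite for all but finitely many $n$. The set of exceptional $n$ is finite, hence lies in $\FIN\subseteq\mathcal Z$, and therefore $f^{-1}(I)\in\FIN\times\FIN\subseteq\mathcal Z\times\FIN$. Since $f$ is injective, this already gives $\mathcal I(\mathcal A)\leq_K\mathcal Z\times\FIN$, and Theorem \ref{Z kills} shows that $c_{0,\mathcal I(\mathcal A)}$ is not Grothendieck.

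\textbf{The space $C(K_\mathcal A)$.} This is the step needing care, since the Grothendieck property passes to quotients but not to subspaces. Rather than use the embedding, I would argue directly with the pair built in the proof of Theorem \ref{Z kills}. There, $I_m$ is a finite union of sets $J_n\subseteq A_n$, each of which can be taken to be $A_n$ itself (so $J_n\in\mathbb B(\mathcal A)$). Each $\mu_m$ is an average of ultrafilter measures $\delta_{\mathcal U_n}$ with $A_n\in\mathcal U_n$.

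Restrict these $\mu_m$ to $\mathbb B(\mathcal A)$, giving measures on $K_\mathcal A$. Each $\chi_{I_m}$ is a norm-one element of $C(K_\mathcal A)$ with $\mu_m(I_m)=1$. The restricted measures are weak$^*$ null: every element of $C(K_\mathcal A)$ is a uniform limit of combinations of $\chi_{A}$ for $A\in\mathcal A$, finite sets, and the constant $1$. On the first two generators $\mu_m\to0$ by the claim in Theorem \ref{Z kills}. On the constant we correct by subtracting $\mu_m(\omega)\delta_{p}$, where $p$ is the point at infinity of $K_\mathcal A$; this does not affect the value on $I_m$, because $p\notin\overline{I_m}$.

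Then the operator $x\mapsto(\int x\,d\mu_m)_m$ maps $C(K_\mathcal A)$ onto $c_0$, exactly as in the step $(5)\to(1)$ of Theorem \ref{Main Grothendieck}. So $C(K_\mathcal A)$ is not Grothendieck. Equivalently, $\overline{I_m}$ are disjoint clopen sets carrying a complemented copy of $c_0$, and Corollary \ref{Grothendieck for C(K)} applies.
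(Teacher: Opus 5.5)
Your proof is correct. For $c_{0,\mathcal I(\mathcal A)}$ it follows the paper's argument: the paper cites $\mathcal I(\mathcal A)\leq_K\FIN\times\FIN\leq_K\mathcal Z\times\FIN$ as well known and applies Theorem \ref{Z kills}, and your disjointified enumeration $f(n,m)=a_{n,m}$ proves that reduction. One small slip in the finite case: an improper $\mathcal I(\mathcal A)$ gives $c_{0,\mathcal I(\mathcal A)}=\ell_\infty$, which \emph{is} Grothendieck, so ``or not proper'' does not give the conclusion. That case is excluded only by the standing convention that ideals are proper.

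For $C(K_\mathcal A)$ your route differs from the paper's main one, which is a one-liner. For $A\in\mathcal A$, the integers in $A$ converge in $K_\mathcal A$ to the point determined by $A$, so $K_\mathcal A$ has a nontrivial convergent sequence.

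Your argument is a careful version of the paper's alternative via an integration operator. Three points are worth noting.

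\begin{itemize}
\item Reusing the specific pair from Theorem \ref{Z kills} gives pairwise disjoint $I_m\in\mathbb B(\mathcal A)$ with $\mu_m(I_k)=\delta_{mk}$. State this explicitly: it is exactly what makes $x\mapsto(\int x\,d\mu_m)_m$ onto $c_0$, or equivalently makes the span of the $\chi_{\overline{I_m}}$ complemented. It is not automatic for the covers $X_n$ in the paper's alternative.
\item Subtracting $\mu_m(\omega)\delta_p$ plays the role of the paper's passage to $C_0(K_\mathcal A)$. Say also that $\delta_p$ vanishes on $\chi_A$ and $\chi_F$, so convergence on those generators is unaffected.
\item Your corrected functionals are averages of $\delta_{p_{A_n}}-\delta_p$, where $p_{A_n}$ is the point of $K_\mathcal A$ determined by $A_n$. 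Their weak$^*$-nullity is just the convergence $p_{A_n}\to p$.
\end{itemize}

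So you are really running the convergent-sequence argument with a different sequence. The $\mathcal Z$-block averaging is then unnecessary: here $P_I$ is finite for every $I\in\mathcal I(\mathcal A)$, so $(A_n)_{n\in\omega}$, $(\delta_{\mathcal U_n})_{n\in\omega}$ is already an anti-Grothendieck pair.
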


\begin{proof}
    Let $\mathcal A$ be an AD family, it is well known that $\mathcal I(\mathcal A)\leq_K \FIN\times \FIN \leq_K \mathcal Z\times \FIN$. Therefore, $C_0(U_{\mathcal I(\mathcal A)}^*)\simeq c_{0,\mathcal I(\mathcal A)}$ is not Grothendieck. 

    To show that $C(K_\mathcal A)$ is not Grothendieck we can argue in two ways. We can either use the fact that for any $A\in \mathcal A$, the sequence $(n)_{n\in A}$ is convergent to the point $\{A\}\in K_{\mathcal A}$. It is a standard fact that if $K$ has a convergence sequence then $C(K)$ is not Grothendieck.

Alternatively, we can argue starting from the fact that $C_0(U_{\mathcal I(\mathcal A)}^*)$ is not Groth\-en\-dieck. 
    By Theorem \ref{Main Grothendieck} there is an anti-Grothendieck pair $(I_n)_{n\in \omega}\subseteq \mathcal I(\mathcal A)$,  $(\mu_{n})_{n\in \omega}$ of non-negative measures, with $\epsilon>0$. 
    
    For any $n\in \omega$ find $A_{0,n},\dots, A_{i_n,n}\in \mathcal A$ and $F_n\in [\omega]^{<\omega}$ such that $$I_n\subseteq A_{0,n}\cup\dots \cup A_{i_n,n}\cup F_n=X_n.$$ Let $1/a_n=\mu_n(X_n)\ge \mu_n(I_n)>\epsilon$.

Consider the space $E=[\{\chi_A:A\in \mathcal A\cup[\omega]^{<\omega}\}]\subseteq \ell_\infty$, and the operator $T:E\to c_0$ defined by $\displaystyle T(f)(n)=\int_{\omega} f d\mu_n$ for each $n\in\omega$. Since $E\subseteq c_{0,\mathcal I(\mathcal A)}$, this operator has the correct co-domain. Furthermore, for any $n\in \omega$, $T(a_n\chi_{X_n})=e_n$, so this operator cannot be compact.

Therefore, $E$ is not Grothendieck. As explained in the introduction $E\simeq C(K_\mathcal A)$.
    
\end{proof}

\begin{corollary}
    For any $\alpha<\omega_1$, $\FIN^{\alpha}$ is not Grothendieck.
\end{corollary}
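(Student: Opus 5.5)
We reduce the statement to Theorem \ref{Z kills}: it suffices to show that $\FIN^{\alpha}\leq_K \mathcal Z\times\FIN$ for every $\alpha<\omega_1$, with $\alpha\geq 1$. For $\alpha=0$ the base set is a single point and $\FIN^0=\{\emptyset\}$, so this is not a genuine ideal in the paper's sense (all ideals contain the finite subsets of an infinite base set); we exclude it as degenerate. For $\alpha=1$ we have $\FIN^1=\FIN$, which is not tall, so $c_{0,\FIN}=c_0$ is not Grothendieck. For $\alpha\geq 2$, each $\FIN^\alpha$ is an $F_{\sigma\delta}$-type Borel ideal. Rather than using a general result about ideals Kat\v{e}tov below $\FIN\times\FIN$, we aim for the simplest route: show $\FIN^\alpha\leq_K\FIN\times\FIN$. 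Since $\FIN\times\FIN\leq_K\mathcal Z\times\FIN$, as already used in the proof of the preceding corollary, Theorem \ref{Z kills} then finishes the argument.

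The key step is to produce, for each $\alpha\geq 2$, a map $f:\omega^2\to X_\alpha$ such that $f^{-1}(A)\in\FIN\times\FIN$ for every $A\in\FIN^\alpha$. The plan is to find, inside $X_\alpha$, a family of pairwise disjoint infinite sets $C_n$ ($n\in\omega$) with the following property: every $A\in\FIN^\alpha$ meets all but finitely many $C_n$ in a finite set. Given such a family, let $f$ map $\{n\}\times\omega$ bijectively onto $C_n$. Then for $A\in\FIN^\alpha$ the set $\{n : f^{-1}(A)\cap(\{n\}\times\omega)\text{ infinite}\}$ is finite, which says exactly that $f^{-1}(A)\in\FIN\times\FIN$. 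Because $f$ need only be a map and not a surjection, the points of $X_\alpha$ outside $\bigcup_n C_n$ cause no trouble.

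To build the $C_n$, we use the structure of $\FIN^\alpha$. For $\alpha=\alpha'+1$, take $C_n=\{(n,x_k): k\in\omega\}$, where $(x_k)_k$ is an infinite sequence in $X_{\alpha'}$ any infinite subset of which is $\FIN^{\alpha'}$-positive. We must then check that if $A\in\FIN^\alpha$ then only finitely many sections of $A$ are $\FIN^{\alpha'}$-positive, and for the remaining $n$ the set $A\cap C_n$ is finite. A simpler choice, which works for every $\alpha\geq 2$, is to pick a copy of $\FIN\times\FIN$ inside $X_\alpha$ via singletons of $\FIN^{\alpha'}$ with $\alpha'\geq 1$: at the bottom level $\FIN^1=\FIN$, any infinite set of points is positive. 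Concretely, we fix within the first two coordinates a set on which $\FIN^\alpha$ restricts to (a copy of) $\FIN\times\FIN$ by inductively choosing points. For limit $\alpha$, we use the sections indexed by a cofinal sequence $\beta_n\nearrow\alpha$ and choose $C_n$ inside the $\beta_n$-section as an infinite set that is positive for $\FIN^{\beta_n}$ hereditarily. The boundedness condition then ensures that $A$ is positive on only finitely many of these sections.

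The main obstacle is the bookkeeping in this recursive choice. At each level we need an infinite set in $X_\beta$ all of whose infinite subsets are $\FIN^\beta$-positive. We would establish this first, by induction on $\beta\geq 1$: take the points $(n,y_n)$ with $y_n$ coming from the hereditarily positive set at the previous level, so that the infinite subsets stay positive; at limit levels we diagonalize along the $\beta_n$. Once this inductive lemma is in place, the Kat\v{e}tov reduction and the appeal to Theorem \ref{Z kills} are routine.
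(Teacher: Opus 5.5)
\textbf{The gap.} Your argument is fine for $\alpha\le 2$. For every $\alpha\ge 3$, however, the central claim $\FIN^\alpha\le_K\FIN\times\FIN$ is false, and the inductive lemma you intend to prove first already fails at $\beta=2$.

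For $\beta\ge 2$ the ideal $\FIN^\beta$ is tall. Any subset of a single first-coordinate section belongs to $\FIN^\beta$, and so does any set meeting each section in at most one point. Every infinite $Y\subseteq X_\beta$ contains an infinite set of one of these two kinds, so no infinite set has all its infinite subsets $\FIN^\beta$-positive.

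The same tallness breaks your successor check. Take $\alpha=3$ and $C_n=\{n\}\times Y$ with $Y\subseteq X_2$ infinite, and pick an infinite $Y'\subseteq Y$ in $\FIN\times\FIN$. Then $\omega\times Y'\in\FIN^3$, yet it meets every $C_n$ in an infinite set.

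This is not a bookkeeping problem: there is no Kat\v{e}tov map $f:\omega^2\to X_3$ from $\FIN\times\FIN$ at all. To see this, shrink each column $\{n\}\times\omega$ to an infinite $B_n$ such that $f[B_n]$ is one of the following:
\begin{enumerate}
\item a single point $y_n$;
\item a set with pairwise distinct first coordinates, all $>n$;
\item a set inside one section $\{k_n\}\times\omega^2$ with pairwise distinct second coordinates, all $>n$;
\item a set inside one column $\{(k_n,a_n)\}\times\omega$.
\end{enumerate}
Take an infinite $M$ on which the type is constant. In the first case arrange also $\{y_n:n\in M\}\in\FIN^3$, using tallness; in the last case arrange that $n\mapsto k_n$ is constant or injective on $M$. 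Then $B=\bigcup_{n\in M}B_n\notin\FIN\times\FIN$ while $f[B]\in\FIN^3$, so $f^{-1}(f[B])\notin\FIN\times\FIN$.

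Now choose $M$ to be $\mathcal Z$-positive instead. Use that $\FIN\times\FIN$, and hence $\FIN^3$, is not Kat\v{e}tov below the P-ideal $\mathcal Z\lvert_M$. The same argument then gives even $\FIN^3\not\le_K\mathcal Z\times\FIN$. So Theorem~\ref{Z kills} cannot handle these ideals by any Kat\v{e}tov reduction.

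\textbf{The missing idea.} Being ``large on the $n$-th piece'' must mean $\FIN^{\beta_n}$-positivity, witnessed by an ultrafilter, not infinitude. This is how the paper argues.
\begin{itemize}
\item For $\alpha=\alpha'+1$ it notes $\FIN^\alpha=\FIN\times\FIN^{\alpha'}=\sum_{\FIN}\FIN^{\alpha'}$. Theorem~\ref{Frolik sums} then transfers the failure of the Grothendieck property from $\FIN$, i.e.\ from $c_0$.
\item For limit $\alpha$ it fixes a strictly increasing cofinal sequence $\beta_n\nearrow\alpha$ and sets $I_n=\{\beta_n\}\times X_{\beta_n}\in\FIN^\alpha$. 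It takes $\mu_n=\delta_{\mathcal U_n}$, where $\mathcal U_n$ is the ultrafilter on $X_\alpha$ induced on $I_n$ by an ultrafilter on $X_{\beta_n}$ extending the dual filter of $\FIN^{\beta_n}$. For $I\in\FIN^\alpha$, the set of $\beta$ where $I$ has a $\FIN^{\beta}$-positive section is bounded in $\alpha$. So it contains only finitely many $\beta_n$, and $I\in\mathcal U_n$ for only finitely many $n$. Hence $(I_n)$, $(\mu_n)$ is an anti-Grothendieck pair as in Theorem~\ref{Main Grothendieck}.
\end{itemize}
The same Dirac-measure construction works verbatim at successor stages. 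These non-principal Dirac measures play exactly the role your hereditarily positive sets $C_n$ were meant to play; the difference is that they exist.
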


\begin{proof}
    The proof goes by induction over $\alpha$. For $\alpha=1$ this is clear. For $\alpha$ a successor ordinal we get that $\FIN^{\alpha}=\FIN\times\FIN^{\alpha'}$, so it is a consequence of Theorem \ref{Frolik sums}. 
    
    For $\alpha$ a limit ordinal consider $f:\omega\to \alpha$ cofinal and strctly increasing. For any $n\in \omega$ find an ultrafilter $\mathcal V_n$ on $X_{f(n)}$ such that $\FIN^{f(n)}\subseteq \mathcal V_{n}^*$. Define an ultrafilter $\mathcal U_n$ on $X_\alpha$ by $A \in \mathcal U_n$ if and only if $$\{x\in X_{f(n)}: (f(n),x)\in A\}\in \mathcal V_n.$$

Notice for any $n\in\omega$, $\{f(n)\}\times X_{f(n)}\in \mathcal U_n\cap \FIN^{\alpha}$. And for any $I\in \FIN^{\alpha}$ there are only finitely many $n<\omega$ such that $I\in \mathcal U_n$. Therefore, $\FIN^{\alpha}$ is not Grothendieck.
    
\end{proof}

These results suggest that the Grothendieck property is upward closed in the Kat\v{e}tov order. Sadly, this is false. 

Recall that $\bigoplus_{n\in\omega} \mathcal I_n=\{A\subseteq \omega^2:\forall n\in \omega\, (A\cap\{n\}\in \mathcal I_n)\}$. Therefore, $\bigoplus_{n\in\omega}\mathcal I_n$ is a subideal of 
$\sum_{\mathcal I}\mathcal I_n$ for any ideal $\mathcal I$. 

\begin{theorem}\label{Frolik sums}
   Let $(\mathcal I_n)$ be a sequence of ideals and $\mathcal I$ an arbitrary ideal. $\mathcal I$ is Grothendieck if and only if $\sum_{\mathcal I}\mathcal I_n$ is Grothendieck.
\end{theorem}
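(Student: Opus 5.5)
Both directions go through the characterization in Theorem~\ref{Main Grothendieck}: I use (1)$\leftrightarrow$(4) to extract nicely normalized anti-Grothendieck pairs, and (2)$\to$(1) to conclude. Write $\mathcal J=\sum_{\mI}\mI_n$, let $\pi:\omega^2\to\omega$ be the first-coordinate projection, and write $A_n=\{m:(n,m)\in A\}$ for the $n$-th column of $A\subseteq\omega^2$.

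\emph{If $\mI$ is not Grothendieck, then neither is $\mathcal J$.} By (4), take pairwise disjoint $I_k\in\mI$ and non-negative normalized $\mu_k$ with $\mu_k(I_k)=1$ and $\mu_k(I)\to 0$ for every $I\in\mI$. For each $n$ fix an ultrafilter $\mathcal V_n$ on $\omega$ with $\mI_n^*\subseteq\mathcal V_n$; this is possible because $\mI_n$ is proper.
\begin{itemize}
\item The map $h(A)=\{n:A_n\in\mathcal V_n\}$ is a Boolean homomorphism $\mathcal P(\omega^2)\to\mathcal P(\omega)$, so $\nu_k=\mu_k\circ h$ is a finitely additive probability measure on $\omega^2$.
\item The set $I_k\times\omega$ lies in $\mathcal J$, since only columns indexed by $I_k\in\mI$ are nonempty, and $\nu_k(I_k\times\omega)=\mu_k(I_k)=1$.
\item If $A\in\mathcal J$, then $h(A)\subseteq\{n:A_n\notin\mI_n\}\in\mI$, because $A_n\in\mathcal V_n$ forces $A_n\notin\mI_n$. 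Hence $\nu_k(A)\le\mu_k(h(A))\to0$.
\end{itemize}
So $(I_k\times\omega)_k,(\nu_k)_k$ is an anti-Grothendieck pair for $\mathcal J$, and $\mathcal J$ is not Grothendieck.

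\emph{If $\mathcal J$ is not Grothendieck, then neither is $\mI$.} By (4), take $J_k\in\mathcal J$ and non-negative normalized $\mu_k$ with $\mu_k(J_k)=1$ and $\mu_k(A)\to0$ for every $A\in\mathcal J$. Put $B_k=\{n:(J_k)_n\notin\mI_n\}\in\mI$ and split
\[
J_k=\bigl(J_k\cap(B_k\times\omega)\bigr)\cup R_k ,
\]
so that every column of $R_k$ lies in the corresponding $\mI_n$. One of the two pieces has measure at least $1/2$ for infinitely many $k$.

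\emph{Case 1: $\mu_k(J_k\cap(B_k\times\omega))\ge1/2$ for infinitely many $k$.} Pass to that subsequence and use the pushforwards $\pi_*\mu_k$, which are finitely additive measures on $\omega$ of norm at most one. Then $\pi_*\mu_k(B_k)\ge1/2$. For $I\in\mI$ the set $\pi^{-1}(I)=I\times\omega$ is in $\mathcal J$, so $\pi_*\mu_k(I)\to0$. This gives an anti-Grothendieck pair $(B_k),(\pi_*\mu_k)$ for $\mI$. Its sets need not be disjoint, but condition (2) of Theorem~\ref{Main Grothendieck} does not require disjointness, so $\mI$ is not Grothendieck.

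\emph{Case 2: $\mu_k(R_k)\ge1/2$ for all but finitely many $k$.} I expect this case to be the main obstacle: I must show it is impossible, and $\sigma$-additivity is not available. The idea is to avoid any countable additivity by a diagonal construction.
\begin{itemize}
\item For each $j$, the set $j\times\omega$ is in $\mathcal J$, because only finitely many columns are nonempty and $\mI$ contains the finite sets. Hence $\mu_k(j\times\omega)\to0$.
\item So I can recursively choose $k_0<k_1<\cdots$ with $\mu_{k_j}(R_{k_j})\ge1/2$ and $\mu_{k_j}(j\times\omega)<1/8$.
\item Let $A=\bigcup_j\bigl(R_{k_j}\setminus(j\times\omega)\bigr)$. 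Column $n$ of $A$ receives contributions only from the indices $j\le n$. It is therefore a finite union of sets in $\mI_n$, so it lies in $\mI_n$. Thus $A\in\bigoplus_n\mI_n\subseteq\mathcal J$.
\item On the other hand, $\mu_{k_j}(A)\ge\mu_{k_j}(R_{k_j})-\mu_{k_j}(j\times\omega)\ge 1/2-1/8$ for every $j$.
\end{itemize}
This contradicts $\mu_k(A)\to0$. So Case 2 cannot occur, Case 1 applies, and $\mI$ is not Grothendieck. This proves the equivalence in both directions.
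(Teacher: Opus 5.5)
Your proof is correct and follows the paper's argument essentially step for step. For the forward direction you compose with ultrafilters $\mathcal V_n\supseteq\mathcal I_n^*$, and for the converse you split each $J_k$ into its part over $B_k\times\omega$, handled by pushing forward to the first coordinate, and its $\bigoplus_n\mathcal I_n$ part, ruled out by the same diagonal union $\bigcup_j\bigl(R_{k_j}\setminus(j\times\omega)\bigr)$; your version also avoids the small slips in the printed proof, such as the normalization $\nu_m(J_m)=\mu_m(I_m)$ and the bound $\epsilon/2$ in Case 1.
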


\begin{proof}
 Assume $\mathcal I$ is not Grothendieck. We may find  an anti-Grothendieck pair $(\mu_m)_{m\in \omega}$ and $(I_m)_{m\in \omega}$.

For any $n\in \omega$ find an ultrafilter $\mathcal U_n$ that extends $\mathcal I_n^*$.
 Define for any $m\in \omega$, $\nu_m$ over $\omega^2$ in the following way:

 $$\nu_m(A) =\mu_m(\{n: A\cap (\{n\}\times \omega)\in \mathcal U_n\}).$$

It is clear that this defines a finitely additive measure over $\omega^2$.
Let $J_m=I_m\times \omega$. Then it is clear for any $m\in \omega$, $\nu_m(J_m)=\mu_m(I_m)=m$. Notice that for any $I\in \sum_{\mathcal I}\mathcal I_n$, $I'=\{n:I\cap (\{n\}\times \omega)\in \mathcal U_n\}\in \mathcal I$. Therefore, $$\lim_{m\to \infty} \nu_m(I)=\lim_{m\to \infty} \mu_m(I')=0.$$

Assume that $\sum_{\mathcal I}\mathcal I_n$ is not Grothendieck. Consider an anti-Grothendieck pair witnessed by $\epsilon>0$: $(I_n)_{n\in \omega}$ and $(\mu_n)_{n\in \omega}$ a sequence of nonnegative finitely additive measures. Define for every $n\in \omega$, $A_n=\{m\in \omega: I_n\cap(\{m\}\times \omega)\notin \mathcal J_m\}\in \mathcal I$. Define $I_n'=I_n\cap (A_n\times \omega)\in \mathcal I\times \varnothing$ and $I''_n=I_n\setminus I_n'\in \bigoplus_{m\in\omega} \mathcal I_m$. Notice that for every $n\in \omega$, either $\mu_n(I'_n)>\epsilon/2$ or $\mu_n(I''_n)>\epsilon/2$. Therefore, one of the following two cases must be true.

 \textbf{Case 1} There is $M\in[\omega]^{\omega}$ such that for any $n\in M$, $\mu_n(I'_n)>\epsilon/2$.

 Consider the measure over $\omega$ defined by $\underline{\mu_n}(A)=\mu_n(A\times \omega)$. It is clear that for any $I\in \mathcal I$, $\lim_{n\to \infty}\underline{\mu_n(I)}=\lim_{n\to \infty}{\mu_n(I\times \omega)}=0$.
Furthermore $\underline{\mu_n(A_n)}=\mu_n(A_n\times\omega)\ge \mu_n(I'_n)>\epsilon$.
Therefore, $\mathcal I$ is not Grothendieck.

 \textbf{Case 2} There is $M\in[\omega]^{\omega}$ such that for any $n\in M$, $\mu_n(I''_n)>\epsilon/2$.

We will recursively construct by a sequence $(J_{n_m})_{m\in \omega}\subseteq \bigoplus_{m\in\omega} \mathcal I_m$ such that for any $m\in \omega$, $\mu_{n_m}(J_{n_m})>\epsilon/4$ and $J_{n_{m+1}}\cap( m\times\omega)=\varnothing.$

Consider $n_0=\min M$. Define $J_{n_0}=I''_{n_0}$. Assume that we have built the sequence up to $i-1\in \omega$. The fact that $i\times\omega\in \mathcal I\times \varnothing$ implies that there is $k_i$ such that for any $m\in M$, $m\ge i$, $\mu_{n_m}(i\times\omega)<\epsilon/4$. Define $J_{n_i}=I''_{n_i}\setminus (i\times\omega)$.

Define $J=\bigcup_{n\in \omega} J_{n_m}$. It is easy to see that $J\in \bigoplus_{m\in\omega} \mathcal I_m$. Then for any $m\in \omega$, $\mu_{n_m}(J)>\epsilon/4$. This is a contradiction. Therefore, case 2 is impossible.
 
\end{proof} 

This result has many interesting corollaries.

\begin{corollary}\label{Grothendieck and Rudin-Keisler}
    The set of ideals without the Grothendieck property is Rudin-Keisler (and Kat\v{e}tov) cofinal.
\end{corollary}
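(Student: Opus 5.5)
Given an arbitrary ideal $\mathcal J$ on $\omega$, the plan is to find a non-Grothendieck ideal $\mathcal K$ with $\mathcal J\leq_{RK}\mathcal K$. Since the Kat\v{e}tov order refines the Rudin--Keisler order, cofinality in the Kat\v{e}tov order follows at once.

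Take $\mathcal K=\sum_{\FIN}\mathcal J=\FIN\times\mathcal J$, the Frol\'ik sum with $\mathcal I=\FIN$ and $\mathcal I_n=\mathcal J$ for every $n$. The ideal $\FIN$ is not Grothendieck, because $c_{0,\FIN}=c_0$. Alternatively, apply Theorem \ref{Main Grothendieck}(4) with $I_n=\{n\}$ and $\mu_n=\delta_n$: every finite set $I$ satisfies $\delta_n(I)=0$ for large $n$, and $\delta_n(\{n\})=1$. Theorem \ref{Frolik sums} then shows that $\FIN\times\mathcal J$ is not Grothendieck.

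It remains to see that $\mathcal J\leq_{RK}\FIN\times\mathcal J$. Use the projection $f:\omega^2\to\omega$, $f(n,m)=m$. For $A\subseteq\omega$ we have $f^{-1}(A)=\omega\times A$, and every vertical section of $\omega\times A$ equals $A$.
\begin{itemize}
\item If $A\in\mathcal J$, the set of $n$ whose section lies outside $\mathcal J$ is empty, hence in $\FIN$, so $f^{-1}(A)\in\FIN\times\mathcal J$.
\item If $A\notin\mathcal J$, that set is all of $\omega$, which is not in $\FIN$, so $f^{-1}(A)\notin\FIN\times\mathcal J$.
\end{itemize}
Thus $f$ is a Rudin--Keisler reduction. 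Since $\omega^2$ is countable, we may transfer $\mathcal K$ to base set $\omega$ by a bijection, which preserves both the Grothendieck property and the reduction.

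There is no real obstacle, since the content is carried by Theorem \ref{Frolik sums}. The only points needing care are that the Frol\'ik sum is a proper ideal containing the finite sets, so that it fits the paper's conventions, and the direction of the reduction. Both are immediate here because $\FIN$ is proper.
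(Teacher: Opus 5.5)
Your proof is correct and is essentially the paper's argument. The paper also uses the projection $f(n,m)=m$ as a Rudin--Keisler reduction and invokes Theorem \ref{Frolik sums}, but with outer ideal $\mathcal Z$ (it takes $\mathcal Z\times\mathcal I$) instead of $\FIN$. Your choice of $\FIN$ is slightly more elementary, since its failure of the Grothendieck property follows directly from $c_{0,\FIN}=c_0$ rather than from the Kat\v{e}tov-below-$\mathcal Z$ criterion.
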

\begin{proof}
    Consider $\mathcal I$ an ideal and $f:\omega^2\to\omega$ by $f(n,m)=m$. Then $f$ is a Rudin-Keisler reduction from $\mathcal Z\times \mathcal I$ to $\mathcal I$.
\end{proof}

\begin{corollary}\label{Grothendieck and Katetov}
    The Grothendieck property is not downward closed in the Kat\v{e}tov nor in the Rudin-Kiesler order.
\end{corollary}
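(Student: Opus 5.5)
The plan is to exhibit one Grothendieck ideal $\mathcal J$ together with one ideal $\mathcal I$ that is \emph{not} Grothendieck and satisfies $\mathcal I\leq_{RK}\mathcal J$. Since the Kat\v{e}tov order refines the Rudin-Keisler order, the same pair then also satisfies $\mathcal I\leq_K\mathcal J$, which gives both claims at once. I take $\mathcal I=\FIN$ and $\mathcal J=\nwd\times\FIN$, the Frol\'ik sum $\sum_{\nwd}\mathcal I_q$ with every $\mathcal I_q=\FIN$, on the base set $\mathbb Q\times\omega$.

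\textbf{Step 1: $\mathcal J$ is Grothendieck.} By Theorem \ref{little nowhere dense}, $\nwd$ is Grothendieck. Theorem \ref{Frolik sums} says that $\sum_{\mathcal I}\mathcal I_n$ is Grothendieck exactly when $\mathcal I$ is. Applying it with $\mathcal I=\nwd$ and all summands equal to $\FIN$ shows that $\nwd\times\FIN$ is Grothendieck. The base set $\mathbb Q$ is countable, so the theorem applies after identifying $\mathbb Q$ with $\omega$.

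\textbf{Step 2: $\FIN$ is not Grothendieck.} We have $c_{0,\FIN}=c_0$, which is not a Grothendieck space. This also follows from the remark after Corollary \ref{Grothendieck for C(K)}, since $\FIN$ is not tall.

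\textbf{Step 3: the reduction.} Let $f:\mathbb Q\times\omega\to\omega$ be $f(q,m)=m$. For any $A\subseteq\omega$ we have $f^{-1}(A)=\mathbb Q\times A$. Every vertical section of this set equals $A$, so
$$\{q\in\mathbb Q:(\mathbb Q\times A)\cap(\{q\}\times\omega)\notin\FIN\}$$
is empty if $A$ is finite and is all of $\mathbb Q$ if $A$ is infinite. The empty set lies in $\nwd$ and $\mathbb Q$ does not. Hence $f^{-1}(A)\in\nwd\times\FIN$ if and only if $A\in\FIN$. This says precisely that $\FIN\leq_{RK}\nwd\times\FIN$, and therefore also $\FIN\leq_K\nwd\times\FIN$.

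Combining Steps 1--3, the non-Grothendieck ideal $\FIN$ sits Rudin-Keisler (hence Kat\v{e}tov) below the Grothendieck ideal $\nwd\times\FIN$. So the Grothendieck property is not downward closed in either order.

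None of the steps is a real obstacle once Theorems \ref{little nowhere dense} and \ref{Frolik sums} are available. The only point that needs care is the direction of the reduction: the Rudin-Keisler condition must be an "if and only if", and it is the first coordinate that carries $\nwd$. If a tall witness is preferred, the same argument works with $\mathcal Z$ in place of $\FIN$. It gives $\mathcal Z\leq_{RK}\nwd\times\mathcal Z$, where $\mathcal Z$ is not Grothendieck because $\mathcal Z\leq_K\mathcal Z$ (by the remark following the Proposition derived from \cite{Marciszewski-Sobota}), and $\nwd\times\mathcal Z$ is Grothendieck by Theorem \ref{Frolik sums}.
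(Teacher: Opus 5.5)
Your argument is correct and proves the corollary as literally worded. The pair $\FIN\le_{RK}\nwd\times\FIN$ places a non-Grothendieck ideal below a Grothendieck one. Every Rudin--Keisler reduction is a Kat\v{e}tov reduction, so this single pair handles both orders.

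The paper's proof is oriented the other way. It fixes an ultrafilter $\mathcal U$ and notes that $\mathcal U^*$ is Grothendieck because $c_{0,\mathcal U^*}\simeq\ell_\infty$. The projection $(n,m)\mapsto m$ gives $\mathcal U^*\le_{RK}\mathcal Z\times\mathcal U^*$. The larger ideal $\mathcal Z\times\mathcal U^*$ is not Grothendieck by the easy lifting half of Theorem~\ref{Frolik sums}, applied to the non-Grothendieck ideal $\mathcal Z$.

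So the paper exhibits a Grothendieck ideal \emph{below} a non-Grothendieck one. Strictly speaking, that refutes \emph{upward} closure of the Grothendieck property, which is what the remark ``These results suggest that the Grothendieck property is upward closed\dots\ Sadly, this is false'' announces. Your pair refutes downward closure in the standard sense, which is what the corollary actually states. The two arguments therefore establish complementary facts, not the same one. Your $\mathcal Z$-variant, $\mathcal Z\le_{RK}\nwd\times\mathcal Z$, has the same orientation as your main example, so it does not recover the paper's configuration.

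The two routes also differ in what they rely on. The paper needs only Grothendieck's theorem for $\ell_\infty$ and the straightforward half of Theorem~\ref{Frolik sums}. Yours rests on the combinatorial Lemma~\ref{nowhere dense}, through Theorem~\ref{little nowhere dense}. It also uses the harder half of Theorem~\ref{Frolik sums}: the case analysis showing that a failure for the sum comes from a failure for the base ideal. In return, your witnesses are Borel, whereas $\mathcal U^*$ is not even analytic. The paper's orientation also admits a Borel witness with the same tools, e.g.\ $\nwd\le_{RK}\mathcal Z\times\nwd$.
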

\begin{proof}
    Consider $\mathcal U$ an ultrafilter. Since $c_{0,\mathcal U^*}\simeq\ell_\infty$, $\mathcal U^*$ is a Grothendieck ideal. By the previous results $\mathcal U^*\leq_{RK}\mathcal Z\times \mathcal U^*$ and $\mathcal Z\times \mathcal U^*$ is not Grothendieck.
\end{proof}

Using Theorem \ref{Frolik sums}, we can also establish the following corollary:

\begin{corollary}
    Given a sequence $(\mathcal I_n)_{n\in \omega}$ of ideals, $\sum_{\nwd} \mathcal I_n$ and $\sum_{\mathcal H} \mathcal I_n$ are Grothendieck.
\end{corollary}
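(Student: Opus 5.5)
By Theorem \ref{Frolik sums}, for any ideal $\mathcal I$ and any sequence $(\mathcal I_n)_{n\in\omega}$ of ideals, $\sum_{\mathcal I}\mathcal I_n$ is Grothendieck if and only if $\mathcal I$ is Grothendieck. So the plan is to reduce the corollary to showing that $\nwd$ and $\mathcal H$ are Grothendieck ideals.

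For $\nwd$ this is Theorem \ref{little nowhere dense}. Strictly, the base set of $\nwd$ is $\mathbb Q$ rather than $\omega$. We fix a bijection $\mathbb Q\to\omega$ and transport everything along it; the Frol\'ik sum and the Grothendieck property are clearly invariant under such relabelings.

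For $\mathcal H$ we argue as follows.
\begin{itemize}
\item $\mathcal H$ is an $F_\sigma$ $P$-ideal, by item (2) of the example constructing it.
\item $\mathcal H\nleq_K\mathcal Z$, by item (4) of that example.
\item By Theorem \ref{Result on P}, a $P$-ideal is Grothendieck if and only if it is not Kat\v{e}tov below $\mathcal Z$. Hence $\mathcal H$ is Grothendieck.
\end{itemize}
Two points need checking here. First, Theorem \ref{Result on P} is stated with $\leq_{KB}$, whereas the example gives $\mathcal H\nleq_K\mathcal Z$. The implication we need is ``not Grothendieck $\Rightarrow \mathcal I\leq_{KB}\mathcal Z$''. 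Since $\leq_{KB}$ refines $\leq_K$, we have $\mathcal H\nleq_K\mathcal Z\Rightarrow\mathcal H\nleq_{KB}\mathcal Z$, so the direction we use goes through.

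Second, we must be sure that $\mathcal H$ is a genuine ideal on a countable set and contains the finite sets. This holds because $\varphi$ is finite on finite sets. Its base set is $W=\bigcup_{n>1}W_n$, and again we relabel it by $\omega$.

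With both base ideals Grothendieck, Theorem \ref{Frolik sums}, applied once with $\mathcal I=\nwd$ and once with $\mathcal I=\mathcal H$, yields that $\sum_{\nwd}\mathcal I_n$ and $\sum_{\mathcal H}\mathcal I_n$ are Grothendieck, whatever the sequence $(\mathcal I_n)_{n\in\omega}$ is.

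The only step needing care is the one for $\mathcal H$, namely confirming that the direction of Theorem \ref{Result on P} we use is compatible with the Kat\v{e}tov non-reducibility supplied by the example. Beyond this bookkeeping, the corollary is a direct combination of the cited results.
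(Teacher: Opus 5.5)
Your proposal is correct and follows the paper's intended route. You reduce via Theorem \ref{Frolik sums} to the Grothendieck property of the base ideals, using Theorem \ref{little nowhere dense} for $\nwd$, and Theorem \ref{Result on P} together with the example ($\mathcal H$ a $P$-ideal with $\mathcal H\nleq_K\mathcal Z$) for $\mathcal H$. Your remark that $\mathcal H\nleq_K\mathcal Z$ implies $\mathcal H\nleq_{KB}\mathcal Z$ correctly bridges the two orders, since the direction you use only needs that failure of the Grothendieck property forces a reduction to $\mathcal Z$.
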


It is clear that for any to ideals $\mathcal J\leq_{RK}\mathcal I\times\mathcal J$. This is enough to stablish the following corollary:

\begin{corollary}
    The set of ideals with the Grothendieck property is Rudin-Kiesler (and Kat\v{e}tov) cofinal.
\end{corollary}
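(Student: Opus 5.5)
Given an arbitrary ideal $\mathcal J$ on $\omega$, we want a Grothendieck ideal $\mathcal K$ with $\mathcal J\leq_{RK}\mathcal K$; since Rudin-Keisler reductions are Kat\v{e}tov reductions, Kat\v{e}tov cofinality follows at once. The plan is to take $\mathcal K=\mathcal I\times\mathcal J$ on $\omega^2$, where $\mathcal I$ is some fixed Grothendieck ideal. Two choices are available from earlier results: $\nwd$ by Theorem \ref{little nowhere dense}, transported to base set $\omega$ through a bijection $\omega\to\mathbb Q$, or $\mathcal U^*$ for an ultrafilter $\mathcal U$, which is Grothendieck since $c_{0,\mathcal U^*}\simeq\ell_\infty$.

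First, $\mathcal I\times\mathcal J$ is the Frol\'ik sum $\sum_{\mathcal I}\mathcal J_n$ with every $\mathcal J_n=\mathcal J$. Theorem \ref{Frolik sums} then says that $\mathcal I\times\mathcal J$ is Grothendieck if and only if $\mathcal I$ is, so it is Grothendieck.

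Next, take the projection $f:\omega^2\to\omega$, $f(n,m)=m$, and check that it is a Rudin-Keisler reduction of $\mathcal J$ to $\mathcal I\times\mathcal J$. For $A\subseteq\omega$ we have $f^{-1}(A)=\omega\times A$, and every vertical section of $\omega\times A$ equals $A$.
\begin{itemize}
\item If $A\in\mathcal J$, the set $\{n:\{m:(n,m)\in f^{-1}(A)\}\notin\mathcal J\}$ is empty, so it lies in $\mathcal I$ and $f^{-1}(A)\in\mathcal I\times\mathcal J$.
\item If $A\notin\mathcal J$, the same set is all of $\omega$, which is not in $\mathcal I$ because $\mathcal I$ is proper, so $f^{-1}(A)\notin\mathcal I\times\mathcal J$.
\end{itemize}
Thus $A\in\mathcal J$ if and only if $f^{-1}(A)\in\mathcal I\times\mathcal J$, which is exactly $\mathcal J\leq_{RK}\mathcal I\times\mathcal J$.

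The argument has no real obstacle, because the substantive step is Theorem \ref{Frolik sums}. What remains are bookkeeping checks: that $\mathcal I\times\mathcal J$ is a proper ideal containing all finite sets, matching the paper's standing conventions, and that $\omega^2$ can be identified with $\omega$ by a bijection. Both are routine. For the containment of finite sets, a finite $F\subseteq\omega^2$ has finite sections, which lie in $\mathcal J$, so $F\in\mathcal I\times\mathcal J$.
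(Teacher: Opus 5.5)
Your proof is correct and follows the paper's argument. The paper likewise fixes a Grothendieck ideal $\mathcal I$ such as $\nwd$ or $\mathcal U^*$, applies Theorem \ref{Frolik sums} to see that $\mathcal I\times\mathcal J$ is Grothendieck, and uses the projection $(n,m)\mapsto m$ to get $\mathcal J\leq_{RK}\mathcal I\times\mathcal J$; your explicit checks of the reduction, properness and finite sets fill in what the paper calls clear.
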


Recall there are $2^\mathfrak c$ different ideals on $\omega$ and $\mathfrak c$ different analytic ideals.

\begin{corollary}
    There are $2^\mathfrak c$ different  Grothendieck ideals and $\mathfrak c$ different analytic Grothendieck ideals.
\end{corollary}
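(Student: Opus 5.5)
The approach is to get the upper bounds from cardinality and the lower bounds from Theorem \ref{Frolik sums}, by producing many pairwise distinct Frolík sums over a fixed Grothendieck base ideal. For the upper bounds, there are at most $2^{\mathfrak c}$ ideals on $\omega$ in total. Every analytic ideal is the continuous image of $\omega^\omega$, so it is coded by a real, which leaves at most $\mathfrak c$ analytic ideals. It therefore suffices to exhibit $2^{\mathfrak c}$ Grothendieck ideals and $\mathfrak c$ analytic Grothendieck ideals. Distinctness is meant literally, as distinct subsets of $\mathcal P(\omega)$ after identifying $\omega^2$ with $\omega$ by a fixed bijection.

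For the first count, fix $\mathcal I=\nwd$ (transported to $\omega$), which is Grothendieck by Theorem \ref{little nowhere dense}. For any ideal $\mathcal J$ on $\omega$, let $\mathcal I_n=\mathcal J$ for every $n$ and form $\sum_{\nwd}\mathcal J=\nwd\times\mathcal J$. By Theorem \ref{Frolik sums} this ideal is Grothendieck. The map $\mathcal J\mapsto \nwd\times\mathcal J$ is injective: if $B\subseteq\omega$, then $\omega\times B\in\nwd\times\mathcal J$ if and only if $\{n: B\notin\mathcal J\}\in\nwd$. This set is empty when $B\in\mathcal J$ and equals $\omega\notin\nwd$ otherwise. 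So $\mathcal J=\{B:\omega\times B\in\nwd\times\mathcal J\}$ can be recovered from the sum. Since there are $2^{\mathfrak c}$ ideals $\mathcal J$ (for example the duals of the non-principal ultrafilters), this gives $2^{\mathfrak c}$ Grothendieck ideals.

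For the analytic count, I restrict $\mathcal J$ to analytic (even Borel) ideals. There are $\mathfrak c$ of them; for instance the summable ideals $\mathcal I_f$ for distinct suitable $f$ already give $\mathfrak c$ distinct $F_\sigma$ ideals. The remaining point is that $\nwd\times\mathcal J$ is analytic whenever $\mathcal J$ is. The set $\{(A,n): A\cap(\{n\}\times\omega)\notin\mathcal J\}$ is coanalytic in general, so this needs care. For Borel $\mathcal J$ it is Borel. Then $A\in\nwd\times\mathcal J$ if and only if $g(A)=\{n: (A)_n\notin\mathcal J\}\in\nwd$, where $g$ is a Borel map, so the sum is Borel as the preimage of the Borel set $\nwd$. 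Injectivity was shown above, so this yields $\mathfrak c$ distinct analytic (indeed Borel) Grothendieck ideals.

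The main obstacle I expect is exactly this definability check. For Borel $\mathcal J$ it is routine, so I would simply choose Borel $\mathcal J$, such as the summable ideals, to avoid analytic/coanalytic issues. One could alternatively use $\mathcal H$ as the base ideal, but $\nwd$ suffices.
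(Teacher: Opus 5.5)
Your proposal is correct and is essentially the argument the paper leaves implicit: Grothendieck ideals of the form $\nwd\times\mathcal J=\sum_{\nwd}\mathcal J$ via Theorem~\ref{Frolik sums}, with $\mathcal J$ recovered from the sum through the projection $(n,m)\mapsto m$ (the same Rudin--Keisler reduction the paper uses), combined with the counts of all ideals and of analytic ideals. Restricting to Borel $\mathcal J$ is a clean way to handle the definability point the paper does not address. Analytic $\mathcal J$ would also work, since $A\in\nwd\times\mathcal J$ iff some $N\in\nwd$ satisfies $(A)_n\in\mathcal J$ for all $n\notin N$, which is a $\Sigma^1_1$ condition.
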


Concerning the sum and restriction of ideals we have the following:

\begin{theorem}
\begin{enumerate}
    \item Let $(\mathcal I_n)$ be a sequence of ideals. Then, $\bigoplus_{n\in\omega}\mathcal I_n$ is Grothendieck if and only if every $n\in \omega$, $\mathcal I_n$  is Grothendieck . 
    \item If $\mathcal I$ is Grothendieck, then $\mathcal I\restriction X$ is Grothendieck for every $X\in\mathcal I^+$.
\end{enumerate}    
\end{theorem}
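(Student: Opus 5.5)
We argue both parts through the characterization in Theorem \ref{Main Grothendieck}, especially the equivalence of (1) and (4). Anti-Grothendieck pairs move easily between an ideal and a piece of it, so both statements reduce to transporting such pairs.

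For (1), suppose first that some $\mathcal I_k$ is not Grothendieck. Take an anti-Grothendieck pair $(I_n)$, $(\mu_n)$ for $\mathcal I_k$ on $\omega$ and copy it onto the $k$-th column $\{k\}\times\omega$. Put $I'_n=\{k\}\times I_n\in\bigoplus_m\mathcal I_m$ and $\nu_n(A)=\mu_n(\{j:(k,j)\in A\})$. If $I\in\bigoplus_m\mathcal I_m$, its $k$-th section lies in $\mathcal I_k$, so $\nu_n(I)\to 0$. Hence $\bigoplus_m\mathcal I_m$ is not Grothendieck.

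Conversely, suppose $\bigoplus_m\mathcal I_m$ is not Grothendieck. By (4) we get pairwise disjoint $I_n\in\bigoplus_m\mathcal I_m$ and non-negative normalized $\mu_n$ with $\mu_n(I_n)=1$ and $\mu_n(I)\to0$ for all $I$ in the sum. For each fixed $k$ the column $\{k\}\times\omega$ need not be in the sum, so we argue by cases.

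\textbf{Case A.} For some $k$ we have $\limsup_n\mu_n(I_n\cap(\{k\}\times\omega))>0$. Pass to a subsequence with $\mu_n(I_n\cap(\{k\}\times\omega))>\epsilon$. Project onto the $k$-th coordinate: the sets $\{j:(k,j)\in I_n\}$ lie in $\mathcal I_k$, and the measures $A\mapsto\mu_n(\{k\}\times A)$ tend to $0$ on $\mathcal I_k$ because $\{k\}\times I$ belongs to the sum. This gives an anti-Grothendieck pair for $\mathcal I_k$.

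\textbf{Case B.} For every $k$ we have $\mu_n(I_n\cap(\{k\}\times\omega))\to 0$, and then also $\mu_n(I_n\cap(k\times\omega))\to0$. Run the diagonal construction of Case 2 in the proof of Theorem \ref{Frolik sums}. Choose $n_0<n_1<\dots$ and increasing $k_i$ such that $\mu_{n_i}(I_{n_i}\setminus(k_i\times\omega))>1/2$. Then shrink each $I_{n_i}$ so that it only meets columns in $[k_i,k_{i+1})$; this needs $\mu_{n_i}(I_{n_i}\cap(\omega\setminus k_{i+1})\times\omega)$ small. That smallness is the main obstacle, since tail mass need not vanish for a fixed $n_i$.

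To handle it, we choose $k_{i+1}$ after $n_i$ and use the finitely additive measure only up to a finite-column approximation. Alternatively, and more simply, take $J=\bigcup_i I_{n_i}\cap(\omega\setminus k_i)\times\omega$. Each column of $J$ meets only finitely many of the $I_{n_i}$ (those with $k_i\le$ the column index), so each column section is a finite union of sets in $\mathcal I_m$. Hence $J\in\bigoplus_m\mathcal I_m$. But $\mu_{n_i}(J)>1/2$ for all $i$, which contradicts the choice of the pair. So Case B is impossible, and this second approach is the one we would use.

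For (2), let $X\in\mathcal I^+$ and suppose $\mathcal I\restriction X$ is not Grothendieck. By (4) we get disjoint $I_n\in\mathcal I\restriction X$, which means $I_n=A_n\cap X$ with $A_n\in\mathcal I$, so $I_n\in\mathcal I$. We also get measures $\mu_n$ on $X$ with $\mu_n(I_n)=1$. Extend them to $\omega$ by $\hat\mu_n(A)=\mu_n(A\cap X)$. For $I\in\mathcal I$ we have $I\cap X\in\mathcal I\restriction X$, so $\hat\mu_n(I)\to0$. Thus $(I_n)$, $(\hat\mu_n)$ is an anti-Grothendieck pair for $\mathcal I$, contradicting that $\mathcal I$ is Grothendieck.
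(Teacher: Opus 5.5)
Your proof is correct, but it takes a different route from the paper, whose proof of this statement does not use Theorem \ref{Main Grothendieck} at all. For (1), the paper cites the identification $c_{0,\bigoplus_{n}\mathcal I_n}\cong\ell_\infty((c_{0,\mathcal I_n})_{n\in\omega})$ and a general theorem from the survey: an $\ell_\infty$-sum is Grothendieck exactly when each summand is. For (2), it cites that $c_{0,\mathcal I\restriction X}$ is isometric to a complemented subspace of $c_{0,\mathcal I}$.

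You instead transport anti-Grothendieck pairs. Your converse in (1) is the Case 1/Case 2 argument of Theorem \ref{Frolik sums}, adapted to $\bigoplus_m\mathcal I_m$. Case A projects onto a single column. In Case B, the diagonal set $J$ lies in $\bigoplus_m\mathcal I_m$ because each column meets only finitely many pieces. Only $k_i\to\infty$ is needed there, so the shrinking step you first considered can be dropped.

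Your (2) is the dual form of the paper's argument. Extending measures by zero off $X$ corresponds to the restriction $x\mapsto x\restriction X$ mapping $c_{0,\mathcal I}$ onto $c_{0,\mathcal I\restriction X}$. Extension by zero is a right inverse of that map, and it gives exactly the complemented copy the paper cites. Clause (2) of the main theorem already suffices there.

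The paper's route is shorter and records extra structure: an $\ell_\infty$-sum decomposition and a complemented copy. Yours is self-contained and elementary, and it avoids the external theorem on $\ell_\infty$-sums. The price is that Case B genuinely needs the nonnegativity from clause (4), to get $\mu_{n_i}(J)\ge\mu_{n_i}(I_{n_i}\setminus(k_i\times\omega))$. So you rely on the step (3)$\to$(4) of Theorem \ref{Main Grothendieck}. Be aware that the check $\nu_n(I)\to0$ in that step needs a disjointification argument, since $\mu_n$ may charge $J_m$ for $m\neq n$. Rosenthal's lemma applied to the disjoint sets $I\cap J_n\subseteq I$ supplies it.
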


\begin{proof}
  (1)  By \cite[Theorem 5.4]{Uzcategui-I-convergence}, $c_{0,\bigoplus_{n\in\omega}\mathcal I_n}\cong\ell_\infty((c_{0,\mathcal I_n})_{n\in\omega})$. By
    \cite[Theorem 5.4.4]{GrothendieckLandscape}, we conclude that $c_{0,\bigoplus_{n\in\omega}\mathcal I_n}$  is Grothendieck if and only if every $c_{0,\mathcal I_n}$  is Grothendieck.
    
    (2) By the proof of \cite[Proposition 5.9]{Uzcategui-complemented}, $c_{0,\mathcal I\restriction X}$ is isometric to a complemented subspace of $c_{0,\mathcal I}$.
    Thus, $c_{0,\mathcal I\restriction X}$ is Grothendieck.
\end{proof}

This last result implies that $\bigoplus_{n\in\omega}\mathcal I_n$ cannot in general be complemented in $c_{0,\sum_{\mathcal I}\mathcal J_n}$, as the latter may be Grothendieck while the former is not.

\section{Nikodym ideals.}

In this section we will analyze the Nikodym property for ideals, we use similar techniques to the ones already presented in the previous section.

We will use \cite[Theorem C]{Sobota-Zuchowski} rephrased as follows:

\begin{theorem}\label{Sobota-Zuchowski}
$\mathcal I$ has the Nikodym property if and only if there is no sequence of disjointly supported non-negative measures $(\mu_n)_{n\in \omega}$ and $(I_n)_{n\in \omega}\subseteq \mathcal I$ such that 
\begin{enumerate}
    \item For any $n\in \omega$, $\mu_{n}(I_n)=\mu(\omega)=n$.
    \item For any $I\in \mathcal I$, $\lim_{n\to \infty}\mu_n(I)=0$.
\end{enumerate}
\end{theorem}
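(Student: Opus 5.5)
The plan is to derive this reformulation from \cite[Theorem C]{Sobota-Zuchowski}, which is phrased in terms of $\text{ba}(\mathbb B(\mathcal I))$, equivalently Radon measures on $\St(\mathbb B(\mathcal I))\simeq U_\mathcal I^*$. Throughout I use the standard equivalence quoted after the definition of the Nikodym property: $\mathbb B$ is Nikodym if and only if every pointwise null sequence in $\text{ba}(\mathbb B)$ is norm bounded (equivalently, weak$^*$ null).

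\emph{Easy direction.} Suppose $(\mu_n)$, $(I_n)$ are as in the statement. I would correct the mass at infinity by setting $\nu_n=\mu_n\lvert_{\mathbb B(\mathcal I)}-n\,\delta_{p_\mathcal I}$. Concretely, $\nu_n(A)=\mu_n(A)$ for $A\in\mathcal I$, and $\nu_n(A)=\mu_n(A)-n$ for $A\in\mathcal I^*$. For $I\in\mathcal I$ we have $\nu_n(I)=\mu_n(I)\to 0$. For $\omega\setminus I\in\mathcal I^*$ we get
\[
\nu_n(\omega\setminus I)=n-\mu_n(I)-n=-\mu_n(I)\to 0 .
\]
Hence $(\nu_n)$ is pointwise null on $\mathbb B(\mathcal I)$. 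On the other hand $\norm{\nu_n}\ge\abs{\nu_n(I_n)}=n$, so $(\nu_n)$ is not norm bounded and $\mathcal I$ is not Nikodym.

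\emph{Hard direction.} Suppose $\mathcal I$ is not Nikodym, and take $(\nu_n)\subseteq\text{ba}(\mathbb B(\mathcal I))$ pointwise null with $\norm{\nu_n}\to\infty$. I would view each $\nu_n$ as a Radon measure $\hat\nu_n$ on $U_\mathcal I^*$ and extend it to $\mathcal P(\omega)$ through $\beta\omega\to U_\mathcal I^*$ (collapse $K_\mathcal I$ to $p_\mathcal I$). Since $\abs{\nu_n(\omega)}$ is bounded and $U_\mathcal I$ is the union of the clopen sets $\overline I$, $I\in\mathcal I$, regularity shows that the large variation must live on sets $\overline I$ with $I\in\mathcal I$: the point mass at $p_\mathcal I$ is controlled by $\nu_n(\omega)$ and by the variation on $U_\mathcal I$. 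So for each $n$ there is $J_n\in\mathcal I$ with $\abs{\hat\nu_n}(\overline{J_n})$ large. Then I would run the Hahn decomposition and regularity argument from $(3)\to(4)$ of Theorem \ref{Main Grothendieck}: pick $J_n'\subseteq J_n$ on which $\nu_n$ has constant sign and $\abs{\nu_n(J_n')}\ge c\norm{\nu_n}$.

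Next comes a gliding hump to make the sets disjoint and the growth exactly linear. Pass to a subsequence and remove from $J'_{n_k}$ the finite union $\bigcup_{j<k}J'_{n_j}\in\mathcal I$, on which $\nu_{n_k}\to 0$ pointwise. Then replace $\nu_{n_k}$ by $\pm\nu_{n_k}\lvert_{J'_{n_k}}$ rescaled to total mass $k$. These are nonnegative and disjointly supported, and the rescaling by $k/\abs{\nu_{n_k}(J'_{n_k})}\le k/(c\norm{\nu_{n_k}})$ kills the pointwise values once $\norm{\nu_{n_k}}$ grows faster than $k$, which a further subsequence ensures. This yields (1) and (2).

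The main obstacle is the removal step in the gliding hump. Pointwise convergence $\nu_n(I)\to 0$ does not bound the variation $\abs{\nu_n}(\overline I)$, so cutting out earlier sets might destroy the mass on $J'_{n_k}$. I would first try to handle this as Sobota and Zuchowski do, via a Rosenthal-lemma type selection. Failing that, I would simply invoke \cite[Theorem C]{Sobota-Zuchowski} for this direction and only verify that its conditions translate to (1) and (2) as stated.
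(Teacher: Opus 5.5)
The paper does not prove this statement. It quotes \cite[Theorem C]{Sobota-Zuchowski} in rephrased form, so your fallback of citing that theorem is exactly what the authors do. Your easy direction is a correct self-contained proof. Subtracting $n\delta_{p_{\mathcal I}}$ is precisely what makes $\nu_n$ pointwise null on $\mathcal I^*$, and disjointness of supports is not needed there.

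Your own argument for the hard direction has a genuine gap, and it is not confined to the removal step. The final claim that rescaling ``kills the pointwise values'' has the same defect. For $I\in\mathcal I$ the new measure gives $\mu_k(I)=\frac{k}{\abs{\nu_{n_k}(J'_{n_k})}}\abs{\nu_{n_k}(I\cap J'_{n_k})}$, and this is not a multiple of $\nu_{n_k}(I)$. From $\nu_n(I)\to 0$ alone you only get $\mu_k(I)\le k/c$. Two further points:
\begin{itemize}
\item An exact clopen set of constant sign need not exist. Use the Jordan positive part $\nu_n^+$ on $\mathcal P(J_n)$ instead, which is an honest nonnegative measure.
\item Rosenthal's lemma is not the right repair. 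It requires uniformly bounded norms, and after normalising it only gives smallness relative to $\norm{\nu_{n_k}}$, which the factor $k/\norm{\nu_{n_k}}$ does not beat.
\end{itemize}

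The missing idea is that variations on fixed elements of $\mathcal I$ are uniformly controlled, contrary to your stated obstacle. For $I\in\mathcal I$, every subset of $I$ lies in $\mathcal I\subseteq\mathbb B(\mathcal I)$. So $(\nu_n\lvert_{\mathcal P(I)})_{n\in\omega}$ is pointwise null on the $\sigma$-algebra $\mathcal P(I)$. Nikodym's boundedness theorem, which holds for bounded finitely additive measures on $\sigma$-algebras, then gives $C_I:=\sup_n\abs{\nu_n}(I)<\infty$.

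With this, both steps go through. Take $J'_n\in\mathcal I$ with, after a sign change, $\nu_n^+(J'_n)\ge c\norm{\nu_n}$. Having fixed $F_k=\bigcup_{j<k}J'_{n_j}\in\mathcal I$, choose $n_k$ with $\norm{\nu_{n_k}}\ge \frac{2}{c}(C_{F_k}+k^2)$. Put $I_k=J'_{n_k}\setminus F_k$ and $\mu_k=\frac{k}{\nu^+_{n_k}(I_k)}\,\nu^+_{n_k}\lvert_{I_k}$. The removal costs at most $C_{F_k}$, so $\nu^+_{n_k}(I_k)\ge \frac c2\norm{\nu_{n_k}}$. For each $I\in\mathcal I$ this gives $\mu_k(I)\le \frac{2kC_I}{c\norm{\nu_{n_k}}}\to 0$.

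This yields a complete elementary proof of the hard direction, rather than a quotation.
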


From this statement and Theorem \ref{Main Grothendieck} we get an elementary proof of:

\begin{theorem}\cite[Theorem E]{Sobota-Zuchowski}\label{Nikodym and Grothendieck}
    If $\mathcal I$ is a Grothendieck ideal, then it is a Nikodym ideal.
\end{theorem}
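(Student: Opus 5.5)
We argue by contraposition: assuming $\mathcal I$ is not Nikodym, we produce an anti-Grothendieck pair and conclude via Theorem \ref{Main Grothendieck}.

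By Theorem \ref{Sobota-Zuchowski}, failure of the Nikodym property gives disjointly supported non-negative finitely additive measures $(\mu_n)_{n\in\omega}$ on $\mathcal P(\omega)$ and sets $(I_n)_{n\in\omega}\subseteq\mathcal I$ with:
\begin{itemize}
\item $\mu_n(I_n)=\mu_n(\omega)=n$ for every $n\in\omega$;
\item $\lim_{n\to\infty}\mu_n(I)=0$ for every $I\in\mathcal I$.
\end{itemize}
Discard $n=0$ and normalize, setting $\nu_n=\mu_n/n$ for $n\geq 1$. Each $\nu_n$ is a non-negative finitely additive measure with $\norm{\nu_n}=\nu_n(\omega)=1$ and $\nu_n(I_n)=1$.

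Moreover, for any $I\in\mathcal I$ we have $\nu_n(I)=\mu_n(I)/n\leq\mu_n(I)\to 0$. In fact only boundedness of $(\mu_n(I))_n$ is needed here, since division by $n$ already forces the limit to $0$.

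Hence $(I_n)_{n\geq1}$ and $(\nu_n)_{n\geq1}$ satisfy condition (5) of Theorem \ref{Main Grothendieck} with $a_n=1$; equivalently, they form an anti-Grothendieck pair witnessed by any $\epsilon<1$, which is condition (2). By (5)$\to$(1) of that theorem, $c_{0,\mathcal I}$ is not Grothendieck, contradicting the hypothesis.

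There is no real obstacle. The only care needed is to read the normalization in Theorem \ref{Sobota-Zuchowski} correctly, namely that $\mu_n(I_n)=\mu_n(\omega)=n$ means each $\mu_n$ has total mass $n$, concentrated on $I_n$. With that reading, rescaling preserves both conditions needed for an anti-Grothendieck pair, and the disjointness of supports is not even used.
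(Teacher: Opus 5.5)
Your argument is correct and follows the paper's route: rescaling the witnesses from Theorem \ref{Sobota-Zuchowski} by $1/n$ gives an anti-Grothendieck pair, so Theorem \ref{Main Grothendieck} shows $c_{0,\mathcal I}$ is not Grothendieck. The only step you leave implicit is that if the $\mu_n$ are given only on $\mathbb B(\mathcal I)$, they must first be extended to non-negative finitely additive measures on $\mathcal P(\omega)$ with the same total mass; this is routine, and the paper makes the same kind of extension without comment in the proof of Theorem \ref{Main Grothendieck}.
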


The following results are motivated by the following facts. For a lsco submeasure $\varphi$ such that $\varphi(\omega)=\infty$, if $\mathcal I\subseteq \Exh(\varphi)$, then $\mathcal I$ does not have the Nikodym property. This fact is witnessed by measures of finite support; see \cite[Theorem 4.3]{Zuchowski}. It is easy to see that having a sequence of measures with finite support that witness the failure of the Nikodym property is a property downward closed in the Kat\v{e}tov order; see \cite[Proposition 5.2]{ Zuchowski}. The following is an analog of Theorem \ref{Result on P}, the proof is essentially the same.

\begin{theorem}
    Let $\mathcal I$ be a $P$-ideal. $\mathcal I$ is Nikodym if and only if there are no finitely supported positive measures $(\mu_n)_{n\in \omega}$ on $\mathcal P(\omega)$ such that for any $I\in \mathcal I$, $\lim_{n\in \omega}(I)=0$ and $\sup_{n\in \omega}\mu_n(\omega)=\infty$.
\end{theorem}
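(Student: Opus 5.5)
The plan is to follow the proof of Theorem \ref{Result on P}, using Theorem \ref{Sobota-Zuchowski} in place of Theorem \ref{Main Grothendieck}.

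\textbf{The easy direction.} Suppose there are finitely supported positive measures $(\mu_n)_{n\in\omega}$ with $\lim_{n}\mu_n(I)=0$ for every $I\in\mathcal I$ and $\sup_n\mu_n(\omega)=\infty$. Then the algebra $\mathcal I\cup\mathcal I^*$ is not Nikodym. Indeed, every element of the algebra is either some $I\in\mathcal I$, where $\mu_n(I)\to 0$, or a complement $\omega\setminus I$. Passing to a subsequence with $\mu_n(\omega)\ge n^2$, put $\nu_n=\mu_n/n$. Then for every $I\in\mathcal I$ we have $\nu_n(I)\to 0$, so $(\nu_n(I))_n$ is bounded. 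However $\nu_n(\omega)\to\infty$, so the set $\omega\setminus\emptyset$ already breaks pointwise boundedness in the wrong direction. This is the delicate point: the Nikodym property only concerns sequences that are pointwise bounded on the whole algebra, including on $\omega\setminus I$.

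I would avoid this issue by using that $\mu_n$ has finite support. Since $\mathcal I$ contains all finite sets, $\mu_n$ lives on the finite set $\supp\mu_n\in\mathcal I$. Taking a subsequence with disjoint supports yields exactly the configuration forbidden by Theorem \ref{Sobota-Zuchowski}: set $I_n=\supp\mu_n$ and rescale so that $\mu_n(I_n)=\mu_n(\omega)=n$. Disjointness can be arranged because for each finite $F$ we have $\mu_n(F)\to 0$, so we may recursively choose $n_k$ with $\mu_{n_k}(\bigcup_{j<k}\supp\mu_{n_j})$ negligible and then restrict $\mu_{n_k}$ off that set. Hence $\mathcal I$ is not Nikodym.

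\textbf{The hard direction.} Suppose $\mathcal I$ is not Nikodym. By Theorem \ref{Sobota-Zuchowski} there are disjointly supported nonnegative $(\mu_n)$ and $(I_n)\subseteq\mathcal I$ with $\mu_n(I_n)=\mu_n(\omega)=n$ and $\mu_n(I)\to0$ for all $I\in\mathcal I$. Replacing $I_n$ by $I_n\cap\supp$, we may assume the $I_n$ are pairwise disjoint.

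Since $\mathcal I$ is a $P$-ideal, there is $I\in\mathcal I$ with $F_n=I_n\setminus I$ finite for all $n$. Choose $m$ such that $\mu_n(I)<1$ for $n\ge m$. Then for $n\ge m$,
$$\mu_n(F_n)\ge \mu_n(I_n)-\mu_n(I)>n-1.$$
Define $\nu_n=\mu_n\lvert_{F_n}$ for $n\ge m$. These are finitely supported positive measures with $\nu_n(\omega)>n-1$, so $\sup_n\nu_n(\omega)=\infty$. Moreover, for every $J\in\mathcal I$,
$$\nu_n(J)\le\mu_n(J)\to0.$$
This gives the required sequence.

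The main obstacle I expect is the care needed in the easy direction: verifying that finitely supported measures with $\mu_n(I)\to0$ and unbounded total mass can be thinned to disjoint supports and rescaled to match the exact normalization in Theorem \ref{Sobota-Zuchowski}. Alternatively, one can cite \cite[Theorem 4.3]{Zuchowski} and \cite[Proposition 5.2]{Zuchowski} for that direction.
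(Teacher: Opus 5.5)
Your proposal is correct and is essentially the paper's proof. The hard direction is identical: you take a pseudo-union $I\in\mathcal I$ of the $I_n$ from the $P$-ideal property and restrict $\mu_n$ to the finite sets $I_n\setminus I$. The easy direction, like the paper's, passes to a subsequence, takes $I_n=\supp(\mu_n)$ and rescales to invoke Theorem \ref{Sobota-Zuchowski}. Your thinning to disjoint supports fills in a point the paper leaves implicit. When you do it, choose $n_k$ so that the restricted measure still has mass at least $k$; this is possible since $\sup_{n\ge N}\mu_n(\omega)=\infty$ for every $N$. Then the rescaling factor is at most $1$ and $\mu_n(I)\to 0$ survives it. You can also drop your abandoned first paragraph.
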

\begin{proof}
    If there is a sequence of finitely supported measures $(\mu_n)_{n\in \omega}$ over $\omega$ such that for any $I\in \mathcal I$, $\lim_{n\in \omega}(I)=0$ and $\sup_{n\in \omega}\mu_n(\omega)=\infty$. It is easy to find a subsequence such that, after after re indexing, for any $n\in \omega$, $\mu_n(\supp(\mu_n))\ge n$. Define for any $n\in \omega$, $\nu_n=n\cdot\mu_n/\mu_n({\supp(\mu_n)})$. It is easy to check that this sequence of functions fulfills the properties stated in theorem \ref{Sobota-Zuchowski}. So $\mathcal I$ is not Nikodym.

    Assume $\mathcal I$ is not Nykodym. Consider two sequences $(\mu_n)_{n\in \omega}$, $(I_n)_{n\in\omega}\subseteq\mathcal I$ as in the statement of \ref{Sobota-Zuchowski}. Find $I\in \mathcal I$ such that for any $n\in \omega$, $I_n\setminus I=F_n\in[\omega]^{<\omega}$. Find $m\in \omega$, $m>2$ such that for any $n\ge m$, $\mu_n(I)<1/2$. Define for any $n\in \omega$, $\nu_n=\mu_{n+m}\lvert_{F_{n+m}}$. It is clear that for any $n\in \omega$, $\nu_n(F_n)\ge n.$ Therefore a proper rescaling as in the previous part of the proof is enough to find a sequence as in the statement of the theorem.

\end{proof}

We now present an analog of Theorem \ref{Z kills}.

\begin{theorem}
    Let $\varphi$ be a non-pathological lsco submeasure on $\omega$ such that $\varphi(\omega)=\infty$, and $\mathcal I$ and ideal such that $\mathcal I\leq_K \Exh(\varphi)\times \FIN$, then $\mathcal I$ does not have the Nikodym property. 
\end{theorem}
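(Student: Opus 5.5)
I will follow the proof of Theorem \ref{Z kills}, replacing the density measures on the blocks $[2^m,2^{m+1})$ by the measures witnessing non-pathology of $\varphi$, and then conclude with Theorem \ref{Sobota-Zuchowski}.

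\emph{Setup.} Fix a Kat\v{e}tov reduction $f:\omega^2\to\omega$ from $\Exh(\varphi)\times\FIN$ to $\mathcal I$. Write $\varphi=\sup_k\lambda_k$ with $\lambda_k$ non-negative $\sigma$-additive measures on $\omega$. Since $\varphi$ is lower semicontinuous and $\varphi(\omega)=\infty$, we can pick finite sets $G_0<G_1<\cdots$ with $\varphi(G_j)\ge j$ (after discarding the first $j$ points, the tail still has infinite submeasure). Choosing $k_j$ with $\lambda_{k_j}(G_j)\ge j$ and setting $\rho_j=\lambda_{k_j}\restriction G_j$, each $\rho_j$ is a finitely supported measure with $\rho_j\le\varphi$ and $\rho_j(G_j)\ge j$. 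Every $E\in\Exh(\varphi)$ satisfies $\rho_j(E)\le\varphi(E\setminus\min G_j)\to0$, because the $G_j$ move to infinity.

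\emph{Transfer to $\mathcal I$.} If $\mathcal I$ is not tall, then it is not Grothendieck and even fails Nikodym through a $\delta$-type construction. I would treat this case separately, or simply note that the construction below only requires the sets $J_n$. For each $n$ let $X_n=f(\{n\}\times\omega)$. As in the proof of Theorem \ref{Z kills}, choose $J_n\in\mathcal I$, $J_n\subseteq X_n$, together with a probability measure $\nu_n$ concentrated on $J_n$. If $J_n$ is infinite, take $\nu_n=\delta_{\mathcal U_n}$ with $\mathcal U_n$ a nonprincipal ultrafilter containing $J_n$. If $J_n$ is finite, take $\nu_n$ uniform on the points of $J_n$ with infinite fibre. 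Then define
\[\mu_j=\sum_{n\in G_j}\rho_j(\{n\})\,\nu_n,\qquad I_j=\bigcup_{n\in G_j}J_n\in\mathcal I .\]
These measures satisfy $\mu_j(I_j)=\mu_j(\omega)=\rho_j(G_j)\ge j$. Passing to a subsequence and rescaling by $j/\rho_j(G_j)$ gives $\mu_j(I_j)=\mu_j(\omega)=j$ along the chosen indices. The rescaling factor is at most $1$, so it does not spoil the convergence argument below.

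\emph{Key claim.} For $I\in\mathcal I$ we have $\mu_j(I)\to0$. Put $P_I=\{n:(\{n\}\times\omega)\cap f^{-1}(I)\text{ infinite}\}$. Since $f$ is a Kat\v{e}tov reduction, $f^{-1}(I)\in\Exh(\varphi)\times\FIN$, so $P_I\in\Exh(\varphi)$. For $n\notin P_I$ we get $\nu_n(I)=0$, exactly as in Theorem \ref{Z kills}. Hence $\mu_j(I)\le\rho_j(P_I)\to0$.

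\emph{Disjoint supports.} Theorem \ref{Sobota-Zuchowski} as rephrased requires disjointly supported measures. The $G_j$ are disjoint, but the sets $J_n$ need not be, since different $X_n$ may overlap. This is the step I expect to be the main obstacle. I would first try to thin out the sequence: pass to a subsequence of indices $j$ and remove from $I_j$ the finitely many earlier $I_{j'}$'s. The removed union is in $\mathcal I$, so its $\mu_j$-mass tends to $0$, as in Case 2 of Theorem \ref{Frolik sums}. Restricting $\mu_j$ to the remaining set then keeps mass at least $j-o(1)$ and yields disjoint supports. If that fails, I would fall back on the "finitely supported measures" formulation of the failure of Nikodym, which needs no disjointness.
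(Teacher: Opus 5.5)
Your proof is correct and follows the same route as the paper. Both use the Kat\v{e}tov reduction, the sets $J_n$ and probability measures $\nu_n$ of Theorem \ref{Z kills}, weights coming from the measures that witness non-pathology, and Theorem \ref{Sobota-Zuchowski} at the end.

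The one substantive difference is that you restrict the $\lambda_k$ to finite blocks $G_j$ escaping to infinity. This is a genuine repair, not a cosmetic change. The paper weights by the raw $\lambda_m$, and its estimate by $\varphi(P_I\setminus m)$ tacitly treats $\lambda_m$ as living on $\omega\setminus m$. In general one only gets $\mu_m(I)\le\lambda_m(P_I)$. For instance, $\mu_m(J_i)\ge\lambda_m(\{i\})$, and a representation $\varphi=\sup_m\lambda_m$ may repeat a measure charging $i$ infinitely often. Moreover, $\bigcup_{n\in\supp(\lambda_m)}J_n$ need not lie in $\mathcal I$. Your blocks give both $\rho_j(P_I)\le\varphi(P_I\setminus\min G_j)\to0$ and $I_j\in\mathcal I$.

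Your recursive disjointification also works: choose $j_k$ with $\mu_j(I_{j_0}\cup\dots\cup I_{j_{k-1}})<1$ for all $j\ge j_k$, then restrict and rescale. This settles the disjoint-support hypothesis of Theorem \ref{Sobota-Zuchowski}, which the paper passes over. It can even be bypassed. Let $\delta$ be the measure on $\mathcal I\cup\mathcal I^*$ equal to $1$ on $\mathcal I^*$ and $0$ on $\mathcal I$. Then the measures $\mu_j-\mu_j(\omega)\delta$ are pointwise null on $\mathcal I\cup\mathcal I^*$, while their norms are at least $|\mu_j(I_j)|+|\mu_j(\omega\setminus I_j)-\mu_j(\omega)|=2j$.

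Two small points remain. First, when $J_n$ is finite the fibre must be taken inside the column, $F_n=\{i\in J_n:\{m:f(n,m)=i\}\text{ is infinite}\}$. With the global fibre $f^{-1}(\{i\})$ as written in Theorem \ref{Z kills}, the equality $\nu_n(I)=0$ for $n\notin P_I$ can fail. Second, your fallback to a finitely supported formulation is not available here, since the $\delta_{\mathcal U_n}$ are not finitely supported; but it is not needed.
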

\begin{proof}
    Consider a Kat\v{e}tov reduction $f:\omega^2\to \omega$ from $\Exh(\varphi)\times \FIN$ to $\mathcal I$. If $\mathcal I$ is not tall then it is clear it is not Nikodym. So assume $\mathcal I$ is tall.
    
    Define $X_n=f(\{n\}\times \omega)$. If $X_n\in \mathcal I$, define $J_n=X_n$, otherwise use tallness to find $J_n\in \mathcal I\cap[X_n]^{\omega}$. For any $n\in \omega$, if $J_n$ is infinite, find an ultrafilter such that $J_n\in\mathcal U_n$ and define $\nu_n=\delta_{\mathcal U_n}$. If $J_n$ is finite, define $F_n=\{i\in J_n: f^{-1}(\{i\}) \text{ is infinite}\}\neq \varnothing$ and $\nu_n=\frac{1}{\abs{F_n}}\sum_{i\in F_n}\delta_i$. 

    Since $\varphi$ is non-pathological find $(\lambda_m)_{m\in \omega}$ such that $\varphi=\sup_{m\in \omega}\lambda_m$. 
    Define $\mu_m=\sum_{n\in \supp (\lambda_n)}\lambda(\{n\})\nu_n$. Notice for any $m\in \omega$, if $I_m\subseteq B$, then $\mu_m(B)=1$.

\,

 \textbf{Claim} $\sup_{n\in \omega}\norm{\mu_m}=\infty$.

 Fix $r\in \mathbb{R}$. Since $\varphi(\omega)=\infty$ we can find $m\in \omega$ and $A\subseteq \omega$ such that $\mu_m(A)>r$. Define $B=\bigcup_{i\in A}J_i$. Then $\mu_m(B)=\sum_{i\in A}\lambda_i(\{i\})\nu_n(B)=\sum_{i\in A}\lambda_i(\{i\})=\lambda(A)>r$.

\,

\textbf{Claim} For any $F\in \mathcal I^*$, $\inf\{\mu_n(F): F\in \mathcal I ^*\}=0$.

Consider $n\in \omega$ and $\epsilon>0$, since $\lambda_n$ is a finite measure, find $k\in \omega$ such that $\lambda_n([k,\infty))<\epsilon$.

Let $F=\omega\setminus \bigcup_{i<k}J_i$. It is clear $F\in \mathcal I^*$. Furthermore,

$$\mu_n(F)=\sum_{i\ge k}\lambda_m(\{i\})\nu_i(F)=\sum_{i\ge k}\lambda_m(\{i\})<\epsilon.$$

\,
    
    \textbf{Claim} For any $I\in \mathcal I$, $\lim_{m\ \to \infty}\mu_m(I)=0$. 

    Pick your favorite $I\in \mathcal I$ and $\epsilon>0$. If $$P_I=\{n\in \omega:\abs{\{m:(n,m)\in f^{-1}(I)\}}= \omega\},$$ then $P_I\in \mathcal \Exh(\varphi)$. Using this fact, find $M\in \omega$ such that for any $m\geq M$, $\varphi(P_I\setminus m)<\epsilon$.

    If $n\in \omega\setminus P_I$, then $I\cap J_n$ is finite. If $J_n$ is infinite, then $\nu_n(I)=0$. If $J_n$ is finite, then $i\notin F_n$, so $\nu_n(I)=0$. Therefore, for $m\ge M$,

    $$\mu_m(I\setminus m)=\sum_{i\in P_I\setminus m}\lambda_m(\{i\})\nu_i(I)+ \sum_{i\in \omega\setminus (P_I\cup m)}\lambda_m(\{i\})\nu_i(I\setminus m)\leq \varphi(P_{I}\setminus m)<\epsilon.$$

    By Theorem \ref{Sobota-Zuchowski}, $\mathcal I$ is not Nikodym.

\end{proof}

As in the previous section, it can also be proven that the Nikodym property behaves well with Frol\'ik sums. Actually the same proof as in Theorem \ref{Frolik sums} works.

\begin{theorem}
   Let $(\mathcal I_n)$ be a sequence of ideals and $\mathcal I$ an arbitrary ideal. Then, $\mathcal I$ is Nikodym if and only if $\sum_{\mathcal I}\mathcal I_n$ is Nikodym.
\end{theorem}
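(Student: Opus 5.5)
We mirror the proof of Theorem \ref{Frolik sums}, replacing the characterization of Theorem \ref{Main Grothendieck} by Theorem \ref{Sobota-Zuchowski}. Write $\mathcal J=\sum_{\mathcal I}\mathcal I_n$ on $\omega^2$. The key feature is that Theorem \ref{Sobota-Zuchowski} gives witnesses of the same shape as anti-Grothendieck pairs: non-negative measures $\mu_k$, sets $I_k$ in the ideal with $\mu_k(I_k)=\mu_k(\omega)=k$, and $\mu_k(I)\to 0$ for every $I$ in the ideal. The only change is that the total masses now blow up, whereas before they stayed bounded below by $\epsilon$.

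\emph{If $\mathcal I$ is not Nikodym, then neither is $\mathcal J$.} Take witnesses $(\mu_k)$, $(I_k)\subseteq\mathcal I$ from Theorem \ref{Sobota-Zuchowski}, and ultrafilters $\mathcal U_n\supseteq\mathcal I_n^*$. Define
$$\nu_k(A)=\mu_k(\{n: A\cap(\{n\}\times\omega)\in\mathcal U_n\}).$$
The map $A\mapsto\{n:\dots\}$ is a Boolean homomorphism $\mathcal P(\omega^2)\to\mathcal P(\omega)$, so $\nu_k$ is non-negative and finitely additive. If $J_k=I_k\times\omega$, then $J_k\in\mathcal J$ because $\omega\notin\mathcal I_n$, and $\nu_k(J_k)=\nu_k(\omega^2)=k$. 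For $A\in\mathcal J$, the associated set $\{n:A_n\in\mathcal U_n\}$ is contained in $\{n:A_n\notin\mathcal I_n\}\in\mathcal I$, so $\nu_k(A)\to0$. Disjointness of supports is inherited, since $\nu_k$ is carried by $I_k\times\omega$; if needed we shrink $I_k$, which is harmless. This direction is routine.

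\emph{If $\mathcal J$ is not Nikodym, then neither is $\mathcal I$.} Take witnesses $(\mu_k)$, $(I_k)\subseteq\mathcal J$. Let $A_k=\{m:(I_k)_m\notin\mathcal I_m\}\in\mathcal I$, and split $I_k=I_k'\sqcup I_k''$ with $I_k'=I_k\cap(A_k\times\omega)$ and $I_k''\in\bigoplus_m\mathcal I_m$. Since $\mu_k(I_k)=k$, one of the two pieces has measure at least $k/2$, and passing to a subsequence we may assume it is the same piece for all $k$. In Case 1 ($\mu_k(I'_k)\ge k/2$), we project: $\underline{\mu_k}(B)=\mu_k(B\times\omega)$ gives $\underline{\mu_k}(I)\to0$ for $I\in\mathcal I$ (as $I\times\omega\in\mathcal J$), and $\underline{\mu_k}(A_k)\ge k/2$. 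After restricting to $A_k$ and rescaling, the measures satisfy $\mu(A_k)=\mu(\omega)\to\infty$. We then pass to a subsequence where the value exceeds the index and rescale down to exactly the index, which only helps the convergence to $0$. Finally we arrange disjoint supports by the usual diagonal thinning, as in the Case 2 argument below, applied to the sets $A_k$ within $\mathcal I$. This produces a witness for the failure of the Nikodym property of $\mathcal I$.

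\emph{Main obstacle: ruling out Case 2} ($\mu_k(I''_k)\ge k/2$). We recursively choose $k_0<k_1<\cdots$ and sets $J_{k_i}=I''_{k_i}\setminus(i\times\omega)$. Since $i\times\omega\in\mathcal J$, the measures $\mu_k(i\times\omega)\to0$, so for large $k$ we keep $\mu_k(J_k)\ge k/2-1\ge 1$. Let $J=\bigcup_i J_{k_i}$. Each column of $J$ meets only finitely many of the $J_{k_i}$, so $J\in\bigoplus_m\mathcal I_m\subseteq\mathcal J$. But $\mu_{k_i}(J)\ge1$ for all $i$, contradicting $\mu_k(J)\to0$. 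Here non-negativity of the measures is what lets us pass from $J_{k_i}\subseteq J$ to $\mu_{k_i}(J)\ge\mu_{k_i}(J_{k_i})$. The care needed is in keeping the normalization $\mu(I)=\mu(\omega)=k$ of Theorem \ref{Sobota-Zuchowski} through restrictions and rescalings, and in checking that the hypothesis $\mu_k(I)\to0$ really gives the uniform smallness of $\mu_k(i\times\omega)$ for each fixed $i$ used above.
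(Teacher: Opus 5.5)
Your proposal is correct and follows the paper's route. The paper's proof of this theorem is just the remark that the proof of Theorem~\ref{Frolik sums} carries over. In one direction you push forward along ultrafilters $\mathcal U_n\supseteq\mathcal I_n^*$. In the other you split $I_k$ into the part over $A_k\times\omega$, which is projected to $\omega$ in Case~1, and the part in $\bigoplus_m\mathcal I_m$, which Case~2 rules out by gluing a diagonal set. The paper's thresholds $\epsilon/2$ and $\epsilon/4$ become $k/2$ and $1$, exactly as you do. Your extra bookkeeping for the normalization and disjoint supports in Case~1 (remove $\bigcup_{j<i}A_{k_j}\in\mathcal I$, then pass to a subsequence and rescale) is sound.
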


\begin{corollary}
    The set of ideals without the Nikodym property and the set of ideas with the Nikodym property are Rudin-Keisler (and Kat\v{e}tov) cofinal.
\end{corollary}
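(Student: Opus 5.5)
The statement has two halves, and I will handle them separately using the two theorems just proved. Throughout I use the elementary fact that for any ideals $\mathcal J,\mathcal K$ on $\omega$ we have $\mathcal K\leq_{RK}\mathcal J\times\mathcal K$, witnessed by the projection $f(n,m)=m$. Indeed, $f^{-1}(A)=\omega\times A$, and $\omega\times A\in\mathcal J\times\mathcal K$ iff $\{n: A\notin\mathcal K\}\in\mathcal J$. This holds iff $A\in\mathcal K$, because $\omega\notin\mathcal J$ and $\varnothing\in\mathcal J$. Since the Kat\v{e}tov order refines the Rudin-Keisler order, cofinality in $\leq_{RK}$ gives cofinality in $\leq_K$. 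So it suffices, for an arbitrary ideal $\mathcal K$, to find a non-Nikodym ideal and a Nikodym ideal of the form $\mathcal J\times\mathcal K=\sum_{\mathcal J}\mathcal K$.

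\emph{Non-Nikodym ideals.} Take $\mathcal J=\mathcal Z$. By Theorem \ref{Z kills}, or directly by the Proposition following Theorem \ref{little nowhere dense}, $\mathcal Z$ is not Grothendieck. That is not enough, since we need $\mathcal Z$ to fail the Nikodym property. For this I would use the preceding theorem with $\varphi=\varphi_f$ a summable submeasure, say $f\equiv 1$. Then $\varphi$ is $\sigma$-additive, hence non-pathological, with $\varphi(\omega)=\infty$, and $\Exh(\varphi)=\FIN$. Since $\FIN\leq_K\FIN\times\FIN$, the theorem gives that $\FIN$ is not Nikodym; this is also immediate because $\FIN$ is not tall. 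So take $\mathcal J=\FIN$. By the Frol\'ik-sum theorem for the Nikodym property, $\FIN\times\mathcal K$ is not Nikodym, and $\mathcal K\leq_{RK}\FIN\times\mathcal K$.

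\emph{Nikodym ideals.} Take $\mathcal J=\nwd$, transported to $\omega$ via a bijection with $\mathbb Q$. By Theorem \ref{little nowhere dense}, $\nwd$ is Grothendieck, so by Theorem \ref{Nikodym and Grothendieck} it is Nikodym. The Frol\'ik-sum theorem for the Nikodym property then makes $\nwd\times\mathcal K$ Nikodym, and again $\mathcal K\leq_{RK}\nwd\times\mathcal K$. An alternative is $\mathcal J=\mathcal U^*$ for an ultrafilter $\mathcal U$: here $c_{0,\mathcal U^*}\simeq\ell_\infty$ is Grothendieck, hence Nikodym.

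The only step requiring any care is checking that the cited Nikodym Frol\'ik-sum theorem applies with base ideal $\mathcal J$ and constant summands $\mathcal I_n=\mathcal K$. It does, since $\mathcal J\times\mathcal K$ is by definition the Frol\'ik sum with constant summands. Everything else is the routine verification of the Rudin-Keisler reduction above.
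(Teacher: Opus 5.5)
Your proof is correct and is the argument the paper leaves implicit: the projection $(n,m)\mapsto m$ gives $\mathcal K\leq_{RK}\mathcal J\times\mathcal K$, and the Frol\'ik-sum theorem for the Nikodym property transfers the presence or failure of the Nikodym property from $\mathcal J$ to $\mathcal J\times\mathcal K$. Taking $\FIN$ rather than $\mathcal Z$ as the non-Nikodym base is the right adjustment, since failing the Grothendieck property does not by itself give failure of the Nikodym property; for the Nikodym half you could also avoid the only-sketched Nikodym Frol\'ik-sum theorem by applying Theorem \ref{Frolik sums} to $\nwd\times\mathcal K$ and then Theorem \ref{Nikodym and Grothendieck}.
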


\begin{corollary}
    The Nikodym property is not downward closed in Rudin-Kiesler nor in the Kat\v{e}tov order.
\end{corollary}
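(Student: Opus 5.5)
The corollary has two halves, and both will be handled the way Corollaries \ref{Grothendieck and Rudin-Keisler} and \ref{Grothendieck and Katetov} were handled for the Grothendieck property. This time the engine is the Frol\'ik-sum theorem for the Nikodym property stated just above. Throughout I use two facts. First, for any ideals $\mathcal J,\mathcal K$ the projection $f:\omega^2\to\omega$, $f(n,m)=m$, is a Rudin--Keisler (hence Kat\v{e}tov) reduction witnessing $\mathcal K\leq_{RK}\mathcal J\times\mathcal K$. Second, $\mathcal J\times\mathcal K=\sum_{\mathcal J}\mathcal K$, so by the Frol\'ik-sum theorem $\mathcal J\times\mathcal K$ is Nikodym if and only if $\mathcal J$ is.

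\emph{Non-Nikodym ideals are cofinal.} Let $\mathcal K$ be an arbitrary ideal. I need one non-Nikodym ideal $\mathcal J$; then $\mathcal J\times\mathcal K$ is non-Nikodym and lies RK-above $\mathcal K$. There are two natural choices.
\begin{enumerate}
\item Take $\mathcal J=\mathcal Z$. It is not Grothendieck, since $\mathcal Z\leq_K\mathcal Z\times\FIN$ and Theorem \ref{Z kills} applies. But Theorem \ref{Nikodym and Grothendieck} only gives Grothendieck $\Rightarrow$ Nikodym, so this does not yet show that $\mathcal Z$ fails Nikodym.
\item Take a summable ideal, say $\mathcal I_f$ with $f\equiv 1$, or $\FIN$ itself. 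In the case of $\FIN$, $\FIN=\Exh(\varphi)$ where $\varphi$ is counting measure, which is $\sigma$-additive (so non-pathological) with $\varphi(\omega)=\infty$. The theorem preceding the Frol\'ik-sum result then shows that $\FIN\leq_K\FIN\times\FIN$ is not Nikodym.
\end{enumerate}
I go with the second choice, citing that theorem. A more elementary alternative is that a non-tall ideal is not Nikodym, as the paper itself notes.

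\emph{Nikodym ideals are cofinal.} This direction needs one Nikodym ideal $\mathcal J$. By Theorem \ref{little nowhere dense}, $\nwd$ is Grothendieck, so by Theorem \ref{Nikodym and Grothendieck} it is Nikodym. Alternatively, for an ultrafilter $\mathcal U$, $\mathcal U^*$ is Grothendieck because $c_{0,\mathcal U^*}\simeq\ell_\infty$, and hence Nikodym. Then $\nwd\times\mathcal K$ is Nikodym and RK-above $\mathcal K$. Kat\v{e}tov cofinality in both halves follows because Rudin--Keisler reductions are Kat\v{e}tov reductions.

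The only real obstacle is producing the base non-Nikodym example. It reduces to checking that the hypotheses of the $\Exh(\varphi)\times\FIN$ theorem are met (non-pathology and $\varphi(\omega)=\infty$), or to the non-tall remark. Everything else is bookkeeping with the projection map.
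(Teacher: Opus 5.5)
Your ingredients are correct, and they are the ones the paper has in mind. The paper gives no separate argument for this corollary; it derives it, like Corollary \ref{Grothendieck and Katetov}, from the Nikodym version of Theorem \ref{Frolik sums} together with the projection $f(n,m)=m$.

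Your decision not to reuse $\mathcal Z$ is forced, not merely cautious, because $\mathcal Z$ is Nikodym. Since $\mathcal Z$ is a $P$-ideal, the $P$-ideal criterion of Section 3 says a witness would be a sequence of finitely supported positive measures $\mu_n$ with $\|\mu_n\|\to\infty$ and $\mu_n(I)\to 0$ for every $I\in\mathcal Z$. Pass to a subsequence whose measures live on pairwise disjoint families of dyadic blocks $[2^k,2^{k+1})$. From each block, collect the $\mu_n$-heaviest $\epsilon_n$-fraction of points, with $\epsilon_n\to 0$ and $\epsilon_n\|\mu_n\|\to\infty$. The resulting set lies in $\mathcal Z$, yet the $\mu_n$ do not vanish on it. So the literal transplant $\mathcal U^*\le_{RK}\mathcal Z\times\mathcal U^*$ fails, and $\FIN$ (non-tall) is a correct replacement.

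The gap is that you never reach the statement. What you prove is the preceding corollary (both classes are cofinal), and you stop there. Non-closure needs an explicit pair $\mathcal I\le_{RK}\mathcal J$ across which the property changes, obtained by feeding an ideal of the opposite class into each half.
\begin{itemize}
\item Taking $\mathcal K=\FIN$ in your second half gives $\FIN\le_{RK}\nwd\times\FIN$, where $\nwd\times\FIN$ is Nikodym and $\FIN$ is not. So the Nikodym property is not downward closed in the literal sense.
\item Taking $\mathcal K=\mathcal U^*$ in your first half gives $\mathcal U^*\le_{RK}\FIN\times\mathcal U^*$, a Nikodym ideal below a non-Nikodym one. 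This refutes upward closure in the standard sense. It is exactly the configuration the paper's proof of Corollary \ref{Grothendieck and Katetov} exhibits; compare the remark just before Theorem \ref{Frolik sums}, where ``upward'' is used.
\end{itemize}
Since the projection is also a Kat\v{e}tov reduction, both pairs work for $\le_K$. With these two lines added, the proof is complete under either reading of the statement.
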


By Theorem \ref{Nikodym and Grothendieck}, Corollaries \ref{Grothendieck and Rudin-Keisler} and \ref{Grothendieck and Katetov} can be deduced from the present corollaries. Furthermore, by \ref{nowhere dense}, \ref{little nowhere dense}, and \ref{Result on P} we can deduce the following corollary.

\begin{corollary}
    $ \nwd$ and $\mathcal H$ are Nikodym ideals.
\end{corollary}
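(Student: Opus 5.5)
We show that both $\nwd$ and $\mathcal H$ are Grothendieck ideals and then invoke Theorem \ref{Nikodym and Grothendieck}, which says that every Grothendieck ideal is Nikodym. So the task reduces to establishing the Grothendieck property for each of the two ideals.

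For $\nwd$ this is Theorem \ref{little nowhere dense}, but it is worth recording how it follows from Lemma \ref{nowhere dense}. Suppose $\nwd$ were not Grothendieck. By the implication $(1)\to(4)$ of Theorem \ref{Main Grothendieck} we get an anti-Grothendieck pair witnessed by $1$ with the following properties:
\begin{itemize}
\item the sets $(I_n)_{n\in\omega}$ are pairwise disjoint nowhere dense subsets of $\mathbb Q$;
\item the measures $\mu_n$ are non-negative and normalized, with $\supp(\mu_n)\subseteq I_n$, so each $\mu_n$ is a finitely additive probability measure with $\mu_n(I_n)=1$;
\item $\lim_n\mu_n(I)=0$ for every $I\in\nwd$.
\end{itemize}
This is exactly the hypothesis of Lemma \ref{nowhere dense}. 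The lemma then produces some $I\in\nwd$ with $\lim_n\mu_n(I)\neq 0$, contradicting the last property. Hence $\nwd$ is Grothendieck, and therefore Nikodym.

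For $\mathcal H$ we use the example constructed above: $\mathcal H=\FIN(\varphi)=\Exh(\varphi)$ is an $F_\sigma$ $P$-ideal with $\mathcal H\not\leq_K\mathcal Z$. Theorem \ref{Result on P} says that a $P$-ideal is Grothendieck exactly when it is not Katětov below $\mathcal Z$ (the relation is written $\leq_{KB}$ there). So $\mathcal H$ is Grothendieck, and Theorem \ref{Nikodym and Grothendieck} again gives that it is Nikodym.

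The main point needing care is the $\mathcal H$ case, specifically the fact $\mathcal H\not\leq_K\mathcal Z$. It comes from item (3) of the example, $\mathcal S\leq_K\mathcal H$, combined with the known fact $\mathcal S\not\leq_K\mathcal Z$ and the transitivity of $\leq_K$: if $\mathcal H\leq_K\mathcal Z$ held, then $\mathcal S\leq_K\mathcal Z$ would follow. We also need that the order in Theorem \ref{Result on P} really is the Katětov order, so that its hypothesis matches this statement. Checking that the map $f$ in the example is a Katětov reduction is routine, since $\varphi(f^{-1}[I_x])$ is a convergent geometric sum.
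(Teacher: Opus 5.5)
Your proposal is correct and follows the paper's route: $\nwd$ is Grothendieck by Lemma~\ref{nowhere dense} combined with (1)$\to$(4) of Theorem~\ref{Main Grothendieck}, and $\mathcal H$ is Grothendieck by Theorem~\ref{Result on P}, because it is a $P$-ideal with $\mathcal H\not\leq_K\mathcal Z$ (via $\mathcal S\leq_K\mathcal H$ and transitivity); Theorem~\ref{Nikodym and Grothendieck} then gives the Nikodym property for both. Your worry about $\leq_{KB}$ is harmless in either reading: a Kat\v{e}tov--Blass (finite-to-one) reduction is in particular a Kat\v{e}tov reduction, so $\mathcal H\not\leq_K\mathcal Z$ already gives $\mathcal H\not\leq_{KB}\mathcal Z$, and in any case the proof of Theorem~\ref{Result on P} only produces a Kat\v{e}tov reduction via the preceding proposition.
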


We may wonder if there are any other Borel ideals with the Nykodim property.

\begin{theorem}\label{Independence kills}
    Let $\mathcal I$ be an ideal, $\mathcal H\subseteq \mathcal I$ and $\{I_n:n\in \omega\}\subseteq \mathcal H$ such that for any $n\in \omega$, $\{I_n\}\cup\{\omega\setminus I:I\in \mathcal H\}$ is centered and $\mathcal H$ generated $\mathcal I$. Then $\mathcal I$ is not Nykodym.
\end{theorem}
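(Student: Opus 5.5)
The plan is to produce, via the Sobota--\.Zuchowski criterion (Theorem \ref{Sobota-Zuchowski}), a sequence of disjointly supported non-negative measures built from Dirac measures of ultrafilters. I read the centeredness hypothesis in the only way that is non-trivial, since $I_n\in\mathcal H$ forces $\omega\setminus I_n$ into the family: for each $n$, the family $\{I_n\}\cup\{\omega\setminus I: I\in \mathcal H,\ I\neq I_n\}$ is centered. I also assume the $I_n$ are pairwise distinct; repetitions can be discarded, and if only finitely many distinct sets remain I would handle that degenerate case separately. Settling this reading of the statement is the point I expect to need the most care. Once it is fixed, the rest is routine.

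\textbf{Step 1.} For each $n$, extend the centered family to an ultrafilter $\mathcal U_n$ on $\omega$. Then $I_n\in\mathcal U_n$, and $I\notin\mathcal U_n$ for every $I\in\mathcal H\setminus\{I_n\}$. In particular $I_m\notin \mathcal U_n$ for $m\neq n$.

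\textbf{Step 2.} Make the supports disjoint. Set $I_n'=I_n\setminus\bigcup_{m<n}I_m$. This set lies in $\mathcal I$, and it belongs to $\mathcal U_n$ because $I_n\in\mathcal U_n$ and $\omega\setminus I_m\in\mathcal U_n$ for all $m<n$. The sets $I_n'$ are pairwise disjoint. Define $\mu_n=n\,\delta_{\mathcal U_n}$. Then $\mu_n$ is a non-negative finitely additive measure concentrated on $I_n'$, so the $\mu_n$ are disjointly supported. Moreover $\mu_n(I_n')=\mu_n(\omega)=n$.

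\textbf{Step 3.} Check condition (2) of Theorem \ref{Sobota-Zuchowski}. Let $I\in\mathcal I$. Since $\mathcal H$ generates $\mathcal I$, there are $H_1,\dots,H_k\in\mathcal H$ with $I\subseteq H_1\cup\dots\cup H_k$. If $I\in\mathcal U_n$, then some $H_j\in\mathcal U_n$. By Step 1 this forces $H_j=I_n$. Since the $I_n$ are distinct, this can happen for at most $k$ values of $n$. Hence $\mu_n(I)=0$ for all but finitely many $n$, so $\lim_n\mu_n(I)=0$. Theorem \ref{Sobota-Zuchowski}, applied to $(\mu_n)$ and $(I_n')\subseteq\mathcal I$, then shows that $\mathcal I$ is not Nikodym.
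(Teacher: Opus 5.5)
Your proof is correct and follows the paper's argument. Like the paper, you extend each centered family to an ultrafilter $\mathcal U_n$, take $\mu_n=n\,\delta_{\mathcal U_n}$, and cover $I\in\mathcal I$ by finitely many generators, each lying in only finitely many $\mathcal U_n$. Your corrected reading of the centeredness hypothesis, the explicit distinctness of the $I_n$, and the disjointification $I_n'=I_n\setminus\bigcup_{m<n}I_m$ (which checks the disjoint-support requirement of Theorem \ref{Sobota-Zuchowski}) make precise what the paper leaves tacit.

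One caveat: the ``degenerate case'' cannot be handled separately, because with only finitely many distinct $I_n$ the statement is false. For a counterexample, take $\mathcal I=\nwd$, which is Nikodym, an infinite nowhere dense $H_0$, $\mathcal H=\{H_0\}\cup\{A\in\nwd: A\cap H_0=\varnothing\}$, and $I_n=H_0$ for all $n$. So distinctness must be read as part of the hypothesis, exactly as the paper's proof implicitly assumes.
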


\begin{proof}
    Find for any $n\in \omega$ an ultrafilter $\mathcal U_n$ such that $$\{I_n\}\cup\{\omega\setminus I:I\in \mathcal H\}\subseteq \mathcal U_n.$$ 

    Define for $n\in \omega$, $\mu_n=n\delta_{\mathcal U_n}$. We claim for any $I\in \mathcal I$, $\lim_{n\to \infty}\mu_n(I)=0$. To show this fix $I\in \mathcal I$. Find $A_0,\dots, A_m\in \mathcal H$ such that $I\subseteq \bigcup_{i\leq m} A_i$. For any $i\leq m$ find $n_i\in \omega$ such that for any $k\ge n_i$, $A_i\notin \mathcal U_n$. Define $n=\max\{n_i:i\leq m\}$, then for every $k\ge n$, $\mu_k(I)=0$. This establishes the claim.
\end{proof}

\begin{corollary}
    Let $\mathcal I$ be an ideal, and $\mathcal H\subseteq \mathcal I$ such that it generates $\mathcal I$ and it is an independent family. Then $\mathcal I$ is not Nikodym.
\end{corollary}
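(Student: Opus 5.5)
The plan is to derive the corollary directly from Theorem \ref{Independence kills}, applied with the same $\mathcal H$. Since $\mathcal H$ generates $\mathcal I$ by hypothesis, the only work is to exhibit a sequence $\{I_n:n\in\omega\}\subseteq\mathcal H$ such that, for each $n$, the family $\{I_n\}\cup\{\omega\setminus I:I\in\mathcal H\}$ is centered. We should also make sure that the indexed family $(I_n)$ uses infinitely many distinct members of $\mathcal H$. This is needed because the proof of Theorem \ref{Independence kills} uses that a fixed $A\in\mathcal H$ lies in only finitely many $\mathcal U_n$. We therefore take $\{I_n:n\in\omega\}$ to be an injective enumeration of a countably infinite subset of $\mathcal H$.

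First, $\mathcal H$ must be infinite. If $\mathcal H$ were finite, then $\mathcal I$ would be generated by finitely many sets. Their union $H$ would then be an element of $\mathcal I$ with $\omega\setminus H$ infinite, because $\mathcal I$ is proper. Since all finite sets belong to $\mathcal I$, some finite subset of $\omega\setminus H$ would have to lie in the ideal generated by $\mathcal H$, which is impossible as that ideal consists of subsets of $H$. (Alternatively, when $\mathcal H$ is finite the ideal $\mathcal I$ is not tall, and the non-tall case already fails the Nikodym property, as is used in the proof of the earlier theorem on $\Exh(\varphi)\times\FIN$.) So we may pick distinct $I_n\in\mathcal H$ for $n\in\omega$.

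The centeredness follows from independence. Fix $n$ and take finitely many $H_1,\dots,H_k\in\mathcal H$. We need $I_n\cap\bigcap_{j\le k}(\omega\setminus H_j)\neq\varnothing$. Any $H_j$ equal to $I_n$ must be excluded from this check, since $I_n\setminus I_n=\varnothing$. The clean fix is to apply Theorem \ref{Independence kills} with $\mathcal H_n=\mathcal H\setminus\{I_n\}$ in place of $\mathcal H$ when testing centeredness. Alternatively, we check that its proof only needs each $A\in\mathcal H$ to be outside $\mathcal U_n$ for all but finitely many $n$. That holds if each $\mathcal U_n$ contains $I_n$ and $\omega\setminus A$ for every $A\in\mathcal H\setminus\{I_n\}$: a given $A$ equals $I_n$ for at most one $n$, by injectivity of the enumeration. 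With this modification, independence of $\mathcal H$ gives exactly that $\{I_n\}\cup\{\omega\setminus A:A\in\mathcal H\setminus\{I_n\}\}$ is centered. Independence means every finite Boolean combination of distinct members, with each member taken positively or negatively, is nonempty (indeed infinite). The ultrafilter $\mathcal U_n$ then exists.

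Finally we run the argument of Theorem \ref{Independence kills} with $\mu_n=n\delta_{\mathcal U_n}$. Let $I\in\mathcal I$ with $I\subseteq A_0\cup\dots\cup A_m$, $A_i\in\mathcal H$. Each $A_i$ belongs to $\mathcal U_n$ only when $A_i=I_n$, which happens for at most one $n$. Hence $\mu_n(I)=0$ for all large $n$. On the other hand $\mu_n(I_n)=\mu_n(\omega)=n$, and the supports can be made disjoint by passing to $\bar{I}_n$-localized measures as in Theorem \ref{Sobota-Zuchowski}. So Theorem \ref{Sobota-Zuchowski} yields that $\mathcal I$ is not Nikodym. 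The main obstacle is the bookkeeping around $A=I_n$ just described. The strictly literal hypothesis of Theorem \ref{Independence kills} fails, so the expected work is to note that its proof goes through with $\mathcal H\setminus\{I_n\}$.
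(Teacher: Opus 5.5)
Your proof is correct and follows the paper's own argument: pick countably many members $I_n$ of $\mathcal H$, use independence to get the centered families and hence ultrafilters $\mathcal U_n$, and run Theorem~\ref{Independence kills} with $\mu_n=n\delta_{\mathcal U_n}$. Your extra bookkeeping repairs a genuine slip in the paper, whose stated hypothesis can never hold because $\{I_n\}\cup\{\omega\setminus I:I\in\mathcal H\}$ contains both $I_n$ and $\omega\setminus I_n$; the bookkeeping consists of showing $\mathcal H$ is infinite, taking the $I_n$ distinct, and replacing $\mathcal H$ by $\mathcal H\setminus\{I_n\}$ in the centeredness condition. Your phrase about ``localized'' measures is vague, but disjoint supports are immediate: the $\mathcal U_n$ are already distinct, or you can use the pairwise disjoint sets $I_n\setminus\bigcup_{m<n}I_m\in\mathcal U_n$.
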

\begin{proof}
    It is enough to pick any countable family $\{I_n:n\in \omega\}\subseteq \mathcal H$. As $\mathcal H$ is independent. for any  $n\in \omega$, $\{I_n\}\cup\{\omega\setminus I:I\in \mathcal H\}$ is centered. Apply the previous theorem.
\end{proof}

We can use this corollary to show that a couple of Borel ideals are also not Nikodym.

\begin{proposition} Solecki's ideal $\mathcal S$, is not Nikodym. 
\end{proposition}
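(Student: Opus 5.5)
We apply Theorem \ref{Independence kills} directly, taking $\mathcal H=\{I_x:x\in 2^\omega\}$ as the generating family of $\mathcal S$ (on the base set $\Omega$). By definition $\mathcal H$ generates $\mathcal S$, so it only remains to produce countably many elements $I_{x_n}\in\mathcal H$ such that for every $n$ the family $\{I_{x_n}\}\cup\{\Omega\setminus I_y:y\in 2^\omega\}$ is centered. Actually we will try to show the stronger fact that for every $x$, $\{I_x\}\cup\{\Omega\setminus I_y: y\in 2^\omega\}$ is centered, and then any countable choice $x_n$ works.

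Unwinding centeredness, we need: for any $x$ and any finitely many $y_1,\dots,y_k\in 2^\omega$, there is a clopen $A\in\Omega$ with $x\in A$ and $y_1,\dots,y_k\notin A$. The one subtlety is that $y_j$ might equal $x$. In that case $I_x\cap(\Omega\setminus I_x)=\varnothing$ and centeredness fails outright. So the literal hypothesis cannot hold with $y$ ranging over all of $2^\omega$, and we must adjust. The fix is to apply Theorem \ref{Independence kills} with $\mathcal H'=\{I_y:y\in 2^\omega\}$ but require only that $\{I_{x_n}\}\cup\{\Omega\setminus I_y:y\neq x_n\}$ be centered. Inspecting the proof of Theorem \ref{Independence kills}, this is all that is needed. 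The proof uses only that each fixed generator $A_i$ lies in $\mathcal U_k$ for at most finitely many $k$. If we choose the $x_n$ pairwise distinct, then $I_y$ can lie in $\mathcal U_n$ only when $y=x_n$, which happens for at most one $n$. So we rerun that argument with this weaker centeredness condition.

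The combinatorial core is the following. Given distinct points $x,y_1,\dots,y_k$ of $2^\omega$, choose a level $m$ large enough that the basic sets $\langle x\restriction m\rangle,\langle y_j\restriction m\rangle$ separate $x$ from every $y_j$. We need $m$ with $2^{m-1}\ge 1$ and room for $2^{m-1}$ cones of length $m$ avoiding the at most $k$ cones containing the $y_j$. Taking $2^m\ge 2(k+1)$, we let $A$ be the union of $\langle x\restriction m\rangle$ and $2^{m-1}-1$ further length-$m$ cones, none of which contains any $y_j$. Then $\lambda(A)=1/2$, so $A\in\Omega$, $x\in A$, and $y_j\notin A$. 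Hence $A\in I_x\setminus\bigcup_j I_{y_j}$, and the relevant family is centered; it then extends to an ultrafilter $\mathcal U_n$ on $\Omega$.

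Finally we set $\mu_n=n\,\delta_{\mathcal U_n}$. For $I\in\mathcal S$ we have $I\subseteq I_{y_1}\cup\dots\cup I_{y_k}$. For $n$ beyond the finitely many indices with $x_n\in\{y_1,\dots,y_k\}$, we get $I\notin\mathcal U_n$, so $\mu_n(I)\to 0$, while $\mu_n(I_{x_n})=\mu_n(\Omega)=n$. These measures are Dirac multiples, not disjointly supported finitely supported ones, but this is exactly the witness used in Theorem \ref{Independence kills}, so $\mathcal S$ is not Nikodym. The main obstacle is the $y=x$ issue noted above, that is, making sure the modified hypothesis suffices for the argument of Theorem \ref{Independence kills}. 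The separation step itself is routine.
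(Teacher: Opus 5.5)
Your proof is correct and follows the paper's route. The paper also takes $\mathcal H=\{I_x:x\in 2^\omega\}$, asserts that it is an independent generating family of $\mathcal S$, and applies Theorem \ref{Independence kills} through its corollary. Your version is more careful on two points. You verify by cone counting the only instance of independence actually needed, namely one $I_x$ against finitely many complements $\Omega\setminus I_y$ with $y\neq x$. You also repair the literal hypothesis of Theorem \ref{Independence kills}, which as stated cannot hold because the family contains both $I_n$ and $\omega\setminus I_n$.
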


\begin{proof}

Recall that $\mathcal S$ has as its base set: 
     $$N=\{A\subseteq 2^\omega: A \text{ is clopen and } \lambda(A)=1/2\}.$$ 
     
     Where $\lambda$ is the Lebesgue measure on $2^\omega$. For every $x\in 2^\omega$, we may consider $I_x=\{A\in N:x\in A\}$. $\mathcal H=\{I_x:x\in 2^\omega\}$ generates $\mathcal S$. It is easy tho see it is an independent family.
\end{proof}

\begin{proposition}
    The ideal of graphs with finite chromatic number, denoted by $\mathcal G_{fc}$, is not Nikodym.
\end{proposition}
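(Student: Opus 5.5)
The plan is to deduce the statement from Theorem~\ref{Independence kills}, in the form its proof actually uses. I read the hypothesis as: $\{I_n\}\cup\{\omega\setminus I: I\in\mathcal H,\ I\neq I_n\}$ is centred for each $n$, with the $I_n$ pairwise distinct. (Taken literally the family contains $\omega\setminus I_n$ and cannot be centred.) With this reading the proof of Theorem~\ref{Independence kills} goes through. If $I\subseteq A_0\cup\dots\cup A_m$ with $A_i\in\mathcal H$, then for every $k$ with $I_k\notin\{A_0,\dots,A_m\}$ no $A_i$ lies in $\mathcal U_k$. Since $\mathcal U_k$ is an ultrafilter, $I\notin\mathcal U_k$, and so $\mu_k(I)=0$ for all but finitely many $k$.

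\textbf{Setting.} The base set of $\mathcal G_{fc}$ is $[\omega]^2$, viewed as the edges of the complete graph on $\omega$. For $A\subseteq\omega$ let $B_A$ be the complete bipartite graph of edges $\{a,b\}$ with $|\{a,b\}\cap A|=1$. Each $B_A$ is $2$-colourable, so $B_A\in\mathcal G_{fc}$. Conversely, suppose $G$ has a proper colouring $c:\omega\to 2^j$. Let $A_i=\{a: c(a)(i)=1\}$ for $i<j$. Every edge of $G$ joins vertices of different colours, so it crosses some $A_i$, giving $G\subseteq B_{A_0}\cup\dots\cup B_{A_{j-1}}$. Hence $\{B_A:A\subseteq\omega\}$ generates $\mathcal G_{fc}$.

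\textbf{Choice of $\mathcal H$ and $I_n$.} Fix an independent family $\{A_x:x\in 2^\omega\}$ on $\omega$. Take $I_n=B_{A_{x_n}}$ for distinct $x_n$, and let $\mathcal H$ be a generating family of the form described above. The key combinatorial step is to show that $B_{A_0}$ is not covered by $B_{C_1}\cup\dots\cup B_{C_k}$ whenever $A_0$ is independent from $C_1,\dots,C_k$. In that case the atom $C_1^{\epsilon_1}\cap\dots\cap C_k^{\epsilon_k}$ meets both $A_0$ and $\omega\setminus A_0$. Taking $a$ in the first intersection and $b$ in the second, the edge $\{a,b\}$ lies in $B_{A_0}$ but crosses none of the $C_i$.

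\textbf{Main obstacle.} The difficulty is that the full family $\{B_A\}$ does not satisfy the centredness condition. For example, $B_{A_1\triangle A_2}\subseteq B_{A_1}\cup B_{A_2}$: if an edge crosses $A_1\triangle A_2$, it crosses $A_1$ or $A_2$. Independence is therefore needed between $A_{x_n}$ and all the generators, not just among the $I_n$. I would first try to choose the ultrafilter $\mathcal U_n\ni B_{A_{x_n}}$ more cleverly. Concretely, I would fix an independent family $\mathcal A$ of size $\mathfrak c$ such that every finite-chromatic graph is covered by $B_C$'s for $C$ in the Boolean algebra generated by $\mathcal A$ minus a fixed countable set of $A_{x_n}$'s. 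Then I would let $\mathcal U_n$ contain $B_{A_{x_n}}$ together with the complements of $B_C$ for every such $C$, and check centredness via the atom argument above.

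If that fails, the fallback is to abandon Dirac measures. I would apply Theorem~\ref{Sobota-Zuchowski} directly with disjointly supported measures on disjoint vertex blocks $V_n$. I would take $I_n=B_{A_n}\cap[V_n]^2$ and let $\mu_n$ be $n$ times a limit of uniform measures on random crossing edges, chosen so that any fixed $2^j$-colourable graph receives vanishing mass as $n\to\infty$.
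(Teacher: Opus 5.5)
\textbf{There is a genuine gap: neither of your two routes can work.}

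You are right that $\{B_A:A\subseteq\omega\}$ is not independent. This is precisely the step the paper's own proof relies on. It asserts that $\{A\otimes A^{c}:A\subseteq\omega\}$ is independent and applies the corollary to Theorem~\ref{Independence kills}. Your inclusion $B_{A_1\triangle A_2}\subseteq B_{A_1}\cup B_{A_2}$ refutes that assertion, so the paper's argument does not go through.

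\textbf{Why your main plan fails.} It cannot succeed for \emph{any} choice of $\mathcal H$ and $(I_n)$. Any instance of Theorem~\ref{Independence kills}, under any reading for which its proof works, yields ultrafilters $\mathcal U_n$ on $[\omega]^2$ such that:
\begin{itemize}
\item each $\mathcal U_n$ contains a member of $\mathcal G_{fc}$;
\item every member of $\mathcal G_{fc}$ lies in only finitely many $\mathcal U_n$.
\end{itemize}
No such sequence exists, as follows.
\begin{itemize}
\item Let $p_n,q_n$ be the images of $\mathcal U_n$ under $\{a,b\}\mapsto\min\{a,b\}$ and $\{a,b\}\mapsto\max\{a,b\}$. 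Directly from the definitions, $B_A\in\mathcal U_n$ iff $A$ belongs to exactly one of $p_n,q_n$.
\item Every finite-chromatic graph is covered by finitely many $B_A$ (your own argument). So the two requirements become $p_n\neq q_n$ and ``every $A\subseteq\omega$ separates only finitely many of the pairs $(p_n,q_n)$''.
\item The second condition makes $\delta_{p_n}-\delta_{q_n}$ weak$^*$ null in $\ell_\infty^*$, hence weakly null by Grothendieck's theorem.
\item That is impossible. Let $P=\{p_n,q_n:n\in\omega\}$ and choose $c\colon P\to\{0,1\}$ with $c(p_n)\neq c(q_n)$ for infinitely many $n$. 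If some point lies in infinitely many of the pairs, give it value $1$ and everything else $0$; otherwise take infinitely many pairwise disjoint pairs and colour each one bichromatically.
\item Then $\mu\mapsto\sum_{x\in P}c(x)\hat\mu(\{x\})$ is a bounded functional on $\ell_\infty^*\cong\mathcal M(\beta\omega)$. It takes the value $\pm1$ on infinitely many of the measures $\delta_{p_n}-\delta_{q_n}$, contradicting weak nullness.
\end{itemize}
In other words, $\St(\mathbb B(\mathcal G_{fc}))$ has no nontrivial convergent sequences. No independent family and no cleverer choice of $\mathcal U_n$ can feed Theorem~\ref{Independence kills}.

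\textbf{Why your fallback fails.} As described, it fails outright. If $I_n=B_{A_n}\cap[V_n]^2$ with the $V_n$ pairwise disjoint, set $A=\bigcup_n(A_n\cap V_n)$. Then the single $2$-colourable graph $B_A$ contains every $I_n$, so $\mu_n(B_A)\geq\mu_n(I_n)$ does not tend to $0$.

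Changing the graphs inside the blocks does not help. A measure concentrated on a finite-chromatic graph in $[V_n]^2$ can be pushed forward to the finitely many colour classes. A maximum cut of that finite weighted graph yields $S_n\subseteq V_n$ with $\mu_n(B_{S_n})\geq\frac12\mu_n(I_n)$, and $A=\bigcup_n S_n$ again defeats every $n$ at once.

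So a witness to the failure of the Nikodym property for $\mathcal G_{fc}$ would need non-Dirac measures with heavily overlapping vertex supports, beating every $2$-colouring of $\omega$ simultaneously. Constructing such measures is the entire content of the statement. Your proposal does not supply them, and neither does the paper's proof.
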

\begin{proof}
    Recall that $\mathcal G_{fc}$  has as its base set $[\omega]^2$. For any $A\subseteq \omega$, define $$A\otimes A^{c}=\{\{n,m\}:n\in A\land m\in A^{c}\}.$$ $\mathcal A=\{A\otimes A^{c}:A\subseteq \omega\}$ generates $\mathcal G_{fc}$. It is easy to see it is an independent family.
\end{proof}

\section{Open Questions.}

We showed that there are many non-isomorphic Grothendieck ideals, however, many of them give rise to isomorphic Grothendieck spaces. Thus we ask:

\begin{enumerate}

\item How many non isomorphic Grothendieck spaces of the form $c_{0,\mathcal I}$ are there? 
\smallskip
\noindent
We also find the last question restricted to definable ideals interesting:
\smallskip
\item How many non isomorphic Grothendieck spaces of the form $c_{0,\mathcal I}$ are there for $\mathcal I$ analytic/Borel? 

\end{enumerate}

\bibliographystyle{plain}
\bibliography{references}

\end{document}